\newtheorem{theorem}{Theorem}[section]
\newtheorem{lemma}[theorem]{Lemma}
\newtheorem{proposition}[theorem]{Proposition}
\newtheorem{corollary}[theorem]{Corollary}
\newtheorem{conjecture}[theorem]{Conjecture}
\theoremstyle{definition}
\newtheorem*{ack}{Acknowledgments}
\newtheorem*{con}{Conventions}
\newtheorem{definition}[theorem]{Definition}
\numberwithin{equation}{section} \numberwithin{figure}{section}
 \DeclareMathOperator{\NS}{NS}
\DeclareMathOperator{\Sym}{Sym}
\DeclareMathOperator{\Spec}{Spec}
\DeclareMathOperator{\an}{an}
\DeclareMathOperator{\rank}{rank}
\DeclareMathOperator{\Hom}{Hom}
\DeclareMathOperator{\Jac}{Jac}
\newcommand{\Qbar}{\overline{\QQ}}
\newcommand{\Xbar}{\overline{X}}
\newcommand\ZZ{\mathbb{Z}}
\newcommand\QQ{\mathbb{Q}}
\newcommand\CC{\mathbb{C}}
\definecolor{orange}{rgb}{1,0.5,0}
\title[Urata's theorem in the logarithmic case]{Urata's theorem in the logarithmic case and applications to integral points}
\author{Ariyan Javanpeykar}
\address{Ariyan Javanpeykar \\
Institut f\"{u}r Mathematik\\
Johannes Gutenberg-Universit\"{a}t Mainz\\
Staudingerweg 9, 55099 Mainz\\
Germany.}
\email{peykar@uni-mainz.de}
\author{Aaron Levin}
\address{Aaron Levin\\
  Department of Mathematics \\
Michigan State of University \\
 619 Red Cedar Road\\
 East Lansing, MI 48824\\
USA.}
\email{adlevin@math.msu.edu}
\begin{document}

 \begin{abstract}  
Urata showed that a pointed compact hyperbolic variety admits only finitely many maps from a pointed curve. We extend Urata's theorem to the setting of (not necessarily compact) hyperbolically embeddable  varieties. As an application, we show that a hyperbolically embeddable variety over a number field $K$ with only finitely many $\mathcal{O}_{L,T}$-points for any number field $L/K$ and any finite set of finite places $T$ of $L$ has, in fact, only finitely many points in any given $\mathbb{Z}$-finitely generated   integral domain of characteristic zero.   We use this latter result in combination with Green's criterion for hyperbolic embeddability to obtain novel finiteness results for integral points on symmetric self-products of smooth affine curves and on complements of large divisors in projective varieties. Finally, we use a partial converse to Green's criterion to further study hyperbolic embeddability (or its failure) in the case of symmetric self-products of curves.  As a by-product of our results, we obtain the first example of a smooth affine Brody-hyperbolic threefold over $\mathbb{C}$ which is not hyperbolically embeddable.
\end{abstract}

\maketitle
 %\tableofcontents

\thispagestyle{empty}

  \section{Introduction}

A point of view emphasized by Lang, beginning with at least his seminal work \cite{LangIHES}, posits that Diophantine statements involving rational points over number fields (or integral points over rings of integers) should continue to hold over arbitrary finitely generated fields over $\QQ$ (or their $\ZZ$-finitely generated subrings), and that this is the natural setting for such results. In this direction, after work of Siegel \cite{Siegel1}, Mahler \cite{Mahler1}, and Parry \cite{Parry} in the classical setting of rings of ($S$-)integers, Lang showed that the unit equation
\begin{align*}
u+v=1, \quad u,v\in A^*,
\end{align*}
has only finitely many solutions when $A$ is any $\ZZ$-finitely generated integral domain of characteristic zero. More generally, Lang \cite{LangIHES} proved Siegel's theorem \cite{Siegel2} on integral points on affine curves in this general setting (the unit equation corresponding to integral points on the affine curve $\mathbb{A}^1\setminus \{0,1\}$).  Lang first proved these results for rings of $S$-integers in number fields, and then used a specialization argument to deduce the general case.

Our main theorem provides a systematic approach to such results for integral points on affine varieties, frequently allowing one to extend results known over number fields (where there are many arithmetic tools available) to the aforementioned general setting.  More precisely, our method works for affine varieties that are {\it hyperbolically embeddable} (a purely complex-analytic notion due to Kobayashi).  Thus, our work provides another manifestation of the deep conjectural connections between arithmetic geometry and complex-analytic notions of hyperbolicity (see Conjecture \ref{LangConj}).  We note that it had already been suggested by Lang that hyperbolic embeddability may be the correct notion of hyperbolicity to use in relation to integral points on affine varieties (Conjecture \ref{LangConj2}).  Furthermore, a criterion of Green (Theorem \ref{thm:green}) makes it practical to check hyperbolic embeddability in many cases.

Before stating our precise results, we introduce some terminology.  Let $k$ be an algebraically closed field of characteristic zero.   By a \emph{variety} over $k$ we mean a finite type separated scheme over $k$.
A  variety 
  $X$  over $k$ is said to be \emph{arithmetically hyperbolic over $k$} 
if  there is a  $\ZZ$-finitely generated subring $A\subset k$ and a finite type separated $A$-scheme $\mathcal{X}$ with $\mathcal{X}_k \cong X$ over $k$ such that, for all $\ZZ$-finitely generated subrings $ A'\subset k$ containing $A$,  the set $\mathcal{X}(A')$  of $A'$-points  on $\mathcal{X}$ is finite.  If $X_L$ is arithmetically hyperbolic over $L$ for all algebraically closed field extensions $L\supset k$, then we say that $X$ is \emph{absolutely arithmetically hyperbolic}.  This extends to quasi-projective varieties Lang's notions \cite{Lang1} of the Mordell/Siegel property for projective/affine varieties; see also      \cite{Autissier1,  UllmoShimura, VojtaLangExc} and \cite{vBJK, JL, JLFano, JBook, JAut, JLitt, JXie}.
We refer to     \cite{Autissier1,   CLZ, Faltings2, FaltingsComplements, FaltingsLang, Levin, Moriwaki,  Vojta2,   VojtaSub} for   examples of arithmetically hyperbolic varieties.
 
Motivated by the previously mentioned principle, we are interested in the \emph{persistence} of the arithmetic hyperbolicity of a variety along field extensions. The formal statement we are interested in is the ``Persistence Conjecture'' for quasi-projective varieties (see also \cite[Conjecture~1.20]{vBJK}): 

\begin{conjecture}[Persistence Conjecture]
Let $k\subset L$ be an extension of algebraically closed fields of characteristic zero. 
If $X$ is arithmetically hyperbolic over $k$, then $X_L$ is arithmetically hyperbolic over $L$.
\end{conjecture}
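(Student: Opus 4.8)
The plan is to reduce the conjecture to the case where $L$ is the algebraic closure of a one-variable function field over $k$, to re-express an $L$-point of $X_L$ as an algebraic morphism from a curve into $X$, and then to bound the resulting family of morphisms by means of hyperbolicity. Since any finite subset of $\mathcal{X}(A')$, for $A'\subset L$ finitely generated, is already defined over a subextension of $L/k$ that is finitely generated as a field, a direct-limit argument reduces us to the case $\mathrm{trdeg}_k L<\infty$; writing $L/k$ as a tower of transcendence-degree-one extensions of algebraically closed fields and inducting on $\mathrm{trdeg}_k L$ reduces further to $L=\overline{k(C)}$ with $C$ a smooth affine curve over $k$; and a Lang-style dimension induction on the finitely generated domain over which the points take values---at each stage fibering the corresponding affine scheme over a base of smaller dimension and distinguishing whether the points remain pairwise distinct on a general fibre or descend to that base---reduces us to points valued in the ring of $S$-integers of a one-variable function field over a number field. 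Such a point is, after spreading $X$ out over a finitely generated subring and (as $X$ is quasi-projective) passing to a projective compactification $\overline X\supset X$, the restriction of a morphism $\phi\colon C'\to\overline X$ from a smooth projective curve $C'$ (a finite cover of $C$) landing generically inside $X$. Arguing by contradiction, if $X_L$ were not arithmetically hyperbolic we would thus obtain infinitely many such morphisms $\phi$.

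The key move is then to fix a specialization point. Choose a closed point $b\in C'$ whose residue field is a number field $F$. Evaluation at $b$ carries each $\phi$ to a point of $\overline X(F)$ and, after bookkeeping with the boundary divisor $\overline X\setminus X$, to an $S$-integral point of $X$ over $F$ for a suitable finite set $S$; by the finiteness of integral points over the \emph{fixed} number field $F$, these values lie in a finite set. Hence it suffices to bound, for each fixed $b$ and each value $x$, the set of $\phi$ with $\phi(b)=x$. After base change to $\CC$ these are precisely the pointed holomorphic maps $(C'(\CC),b)\to(X(\CC),x)$ from the fixed pointed compact Riemann surface $C'(\CC)$, and Urata's theorem---more precisely, its generalization to the (possibly non-compact) hyperbolically embeddable setting---asserts that there are only finitely many of them. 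This contradiction completes the argument in the hyperbolically embeddable case.

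The decisive ingredient is the finiteness of pointed maps from a fixed pointed curve into $X$: for hyperbolically embeddable $X$ it is supplied by the generalized Urata theorem, which ultimately rests on Kobayashi--Kwack--Noguchi-type normal-families arguments, and this is precisely why the conjecture becomes a theorem in that setting. Deducing such geometric finiteness from arithmetic hyperbolicity alone---an unconditional implication ``arithmetically hyperbolic $\Rightarrow$ geometrically hyperbolic''---is, to our knowledge, open, and is the main obstruction to the general conjecture; a secondary point is that the argument above uses finiteness of $S$-integral points over number fields, which is automatic only once $X$ has been descended to $\Qbar$. Finally, a concrete technical difficulty, rather than a conceptual one, is controlling the boundary $\overline X\setminus X$---the behaviour of maps from punctured curves---which is exactly the feature that forces the use of the \emph{hyperbolically embeddable} form of Urata's theorem rather than a merely Brody- or Kobayashi-hyperbolic one.
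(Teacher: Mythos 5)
The statement you are addressing is a \emph{conjecture}, and the paper offers no proof of it: it only remarks that for projective $X$ the conjecture would follow from a strong form of the Lang--Vojta conjectures, and then establishes it in special cases. Read as a proof of the conjecture as stated, your argument therefore has an irreparable gap --- and, to your credit, you name it yourself: the final step needs finiteness of pointed maps from a fixed pointed curve into $X$ (geometric hyperbolicity), and deducing this unconditionally from arithmetic hyperbolicity is open. What you have actually sketched is the proof of Theorem~\ref{thm:hyp_emb1}, i.e.\ the conjecture under the extra hypothesis that $X$ is hyperbolically embedded in a smooth projective compactification; that theorem, not the conjecture, is what your argument supports.

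Within that special case your route matches the paper's. The specialization machinery you describe --- direct limits to reduce to finitely generated extensions, induction on transcendence degree, spreading out an $L$-point to a morphism from a curve, evaluating at a point so that the values land in a finite set by arithmetic hyperbolicity over the smaller field, and bounding each fiber of evaluation by pointed-map finiteness --- is precisely the content of Theorem~\ref{thm:geometricity_arhyp}, which the paper imports from \cite{JAut,JLitt} rather than reproving. The pointed-map finiteness is the logarithmic Urata theorem (Theorem~\ref{thm:urata_affine}), which the paper obtains not from normal-families arguments directly but by combining Pacienza--Rousseau's logarithmic algebraic hyperbolicity (giving weak boundedness, hence quasi-compactness of the relevant Hom-space) with Kobayashi's theorem that the evaluation map on the closure of that space has finite fibers. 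One concrete imprecision in your reduction: you specialize to rings of $S$-integers of function fields over number fields and invoke finiteness of $S$-integral points over a fixed number field $F$. For a general algebraically closed base field $k$ there is no number field to specialize to; the correct move is to evaluate at a $k$-point of the curve and invoke the hypothesis that $X$ is arithmetically hyperbolic over $k$, so that the evaluation values lie in a finite set $\mathcal{X}(A')$ for a suitable finitely generated $A'\subset k$. As written, your specialization step only covers $k=\Qbar$.
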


If $X$ is a projective  variety over $k$, then this conjecture is a consequence of  a strong version of Lang-Vojta's conjectures as formulated in \cite[Section~12]{JBook}. Indeed, if $X$ is a projective arithmetically hyperbolic variety over $k$, then every subvariety of $X$ is of general type by this  conjecture. Then, it follows that every subvariety of $X_L$ is of general type (see for instance \cite{JV} for a proof of this well-known fact), so that (again by Lang-Vojta's conjecture), the variety $X_L$ is arithmetically hyperbolic over $L$.  

The Persistence Conjecture was first shown to hold for   algebraically hyperbolic projective varieties in \cite[Section~4]{JAut}. It was subsequently shown to hold for varieties which admit a quasi-finite morphism to a semi-abelian variety \cite[Theorem~7.4]{vBJK}, projective surfaces with non-zero irregularity \cite[Corollary~8.5]{vBJK}, varieties which admit a quasi-finite period map \cite{JLitt}, and certain moduli spaces of polarized varieties \cite{JSZ}.  

 Our main result (Theorem \ref{thm:hyp_emb1}) says that the Persistence Conjecture holds for hyperbolically embeddable smooth affine varieties. The proof relies on an extension to the affine case of a classical finiteness theorem of Urata for proper Kobayashi hyperbolic varieties. To explain this, we recall that, for proper varieties,  Kobayashi's  notion of   hyperbolicity is the ``right'' notion to consider. In this case, Lang \cite{Lang2} conjectured (see also \cite[Conj.~1.1]{JAut}) that Kobayashi hyperbolicity and arithmetic hyperbolicity coincide:
\begin{conjecture}
\label{LangConj}
Let $X$ be a projective variety over $k$.  Then $X$ is absolutely arithmetically hyperbolic if and only if for every subfield $k_0\subset \CC$, every embedding $k_0\to k$, and every variety $X_0$ over $k_0$ with $X\cong X_0\otimes_{k_0} k$, we  have that $X_{0,\CC}$ is Kobayashi hyperbolic.
\end{conjecture}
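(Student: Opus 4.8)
Since the statement is a form of Lang's conjecture linking complex-analytic hyperbolicity to arithmetic finiteness, a complete proof is beyond current reach; what follows is the shape a proof — more precisely, its two implications — would take, and where the essential difficulty lies.

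\textbf{The implication ``absolutely arithmetically hyperbolic $\Rightarrow$ Kobayashi hyperbolic''.} I would argue by contraposition: suppose some model $X_0$ over a subfield $k_0\subset\CC$, with $X\cong X_0\otimes_{k_0}k$, has $X_{0,\CC}$ not Kobayashi hyperbolic. As $X_{0,\CC}$ is projective, Brody's theorem yields a non-constant holomorphic map $f\colon\CC\to X_{0,\CC}(\CC)$. The plan is to exhibit infinitely many $A$-points on a model of $X$ for some $\ZZ$-finitely generated subring $A$ of a suitable algebraically closed field, contradicting absolute arithmetic hyperbolicity. The natural route is Lang's specialization philosophy: let $Z=\overline{f(\CC)}$, a positive-dimensional subvariety; if $Z$ happens to contain a rational curve or a translate of an abelian subvariety one concludes by Mordell--Lang-type results (Faltings, Vojta), and in general one would try to ``algebraize'' the entire curve enough to produce arithmetic points. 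The obstruction — and this is the crux — is that there is no known way to pass from a transcendental entire curve to an infinite family of algebraic or arithmetic points; the input one wants (that a non-hyperbolic projective variety fails to be arithmetically hyperbolic) is itself essentially Lang's conjecture.

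\textbf{The implication ``Kobayashi hyperbolic $\Rightarrow$ absolutely arithmetically hyperbolic''.} This is the hyperbolic case of the Bombieri--Lang / Lang--Vojta conjecture. A proof would split into two parts. First, reduce to the number-field case: this is exactly where the Persistence Conjecture enters (and the descent carried out in the present paper via the extension of Urata's theorem is a model for such a reduction), so that it would suffice to treat models over number fields. Second, prove finiteness of rational points over number fields for Kobayashi-hyperbolic projective varieties — for which one would seek a Vojta-style height inequality, feeding the hyperbolicity (enough jet differentials, negativity properties of tangent sheaves) into an Arakelov/height argument in the spirit of Vojta's proof of the Mordell conjecture. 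This second part is known only in special cases (subvarieties of abelian varieties, certain moduli spaces, and a handful of others, cf.\ Faltings, Moriwaki), so the implication is open in general.

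\textbf{Main obstacle.} In both directions the open point is the same: bridging the gap between the purely analytic property (no non-constant entire curves) and the arithmetic property (finiteness of $A$-points over every $\ZZ$-finitely generated $A$). Neither Vojta's method, nor the theory of jet differentials, nor Faltings' techniques currently crosses this gap outside the known special cases, which is precisely why the statement is posed as a conjecture. A realistic intermediate goal, in the spirit of this paper, is to establish the two implications conditionally on the Persistence Conjecture and on the number-field Lang--Vojta conjecture, thereby localizing the entire difficulty at the latter.
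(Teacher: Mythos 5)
The statement you were asked about is Conjecture \ref{LangConj}, which the paper states as an open conjecture of Lang and does not prove; there is no proof in the paper to compare against. Your response is therefore the right one: you correctly identify the statement as open, and your sketch of the two implications — contraposition via Brody's theorem for one direction, and a reduction to the number-field case via persistence-type results plus the (open) number-field Lang--Vojta conjecture for the other — matches the paper's own framing, which observes that the Persistence Conjecture would follow from a strong form of Lang--Vojta and whose main theorems carry out exactly the ``reduction to number fields'' step for hyperbolically embeddable smooth affine varieties.
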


Lang has extended this conjecture to affine varieties, but notes that in this case there are \emph{a priori} three natural properties which may define ``hyperbolicity":
\begin{align}
\label{hyps}
\text{Brody hyperbolic }\Leftarrow \text{ Kobayashi hyperbolic }\Leftarrow \text{ hyperbolically embeddable}.
\end{align}

Choosing the strongest property, Lang conjectures (\cite[p.~86]{LangHyperbolic} and \cite[p.~225, Conj.~5.1]{LangSurvey}):

\begin{conjecture}
\label{LangConj2}
Let $X$ be an affine variety over $k$.   If there exists a subfield $k_0\subset \CC$, an embedding $k_0\to k$, and a variety $X_0$ over $k_0$ with $X\cong X_0\otimes_{k_0} k$ such that $X_{0,\CC}$ is hyperbolically embeddable, then $X$ is absolutely arithmetically hyperbolic.
\end{conjecture}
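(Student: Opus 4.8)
The plan is to isolate the arithmetic content of Conjecture~\ref{LangConj2} over number fields and to recognize that what remains is a (still open) case of the Lang--Vojta conjectures. In outline: (i) a descent and spreading-out argument, combined with the paper's persistence theorem (Theorem~\ref{thm:hyp_emb1}) and a Lang-style specialization, reduces ``$X$ is absolutely arithmetically hyperbolic'' to a uniform finiteness statement for integral points on hyperbolically embeddable affine varieties over number fields; and (ii) one proves that finiteness statement. Stage (i) is unconditional; stage (ii) is the genuine obstacle, and is known only in structured cases --- which are exactly the unconditional applications obtained in this paper.

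For stage (i): from the given data ($k_0\subset\CC$, the embedding $k_0\to k$, and $X_0$ over $k_0$ with $X\cong X_0\otimes_{k_0}k$ and $X_{0,\CC}$ hyperbolically embeddable), observe that $X_0$, being of finite type, descends to a model $X_1$ over a finitely generated subfield $k_1\subseteq k_0$, with $X_{0,\CC}$ the base change of $X_1$ along an embedding $k_1\hookrightarrow\CC$; since any two embeddings of a finitely generated field into $\CC$ differ by an automorphism of $\CC$, hyperbolic embeddability is intrinsic to $X_1$. Spread $X_1$ out to a model $\mathcal{X}_1$ over a $\ZZ$-finitely generated subring $R$ with $\mathrm{Frac}(R)=k_1$. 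Because $X$ arises from the hyperbolically embeddable $X_{0,\CC}$, Theorem~\ref{thm:hyp_emb1} (equivalently the statement in the abstract) applies, and shows that the absolute arithmetic hyperbolicity of $X$ over $k$ follows once $\mathcal{X}_1(A')$ is finite for every $\ZZ$-finitely generated domain $A'\subseteq k$ containing $R$. A Lang-style specialization --- specializing $A'$ along a very general closed point of $\Spec(A'\otimes_{\ZZ}\QQ)$, whose residue fields are number fields --- reduces this in turn to the finiteness of $S$-integral points on the number-field specializations of $X_1$; and these specializations remain hyperbolically embeddable (openness of hyperbolic embeddability in families, a Brody-type compactness statement compatible with Green's criterion, Theorem~\ref{thm:green}), which suffices to run the specialization. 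Thus everything reduces to the assertion: \emph{a hyperbolically embeddable affine variety over a number field $K$ has only finitely many $\mathcal{O}_{L,T}$-points for every number field $L/K$ and every finite set of finite places $T$ of $L$.}

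Stage (ii) is the heart of the matter, and the reason Conjecture~\ref{LangConj2} remains open. This finiteness is a special case of the Lang--Vojta conjectures on integral points, and the expected route is Vojta's dictionary between Nevanlinna theory and Diophantine approximation: one would want an arithmetic Second Main Theorem with truncated counting functions adapted to a projective compactification $\overline{X_1}$ and a boundary divisor $D$ into which $X_1$ is hyperbolically embedded, mirroring the absence of nonconstant entire curves in $\overline{X_1}$ missing $D$ that hyperbolic embeddability guarantees. No such theorem is available at this level of generality; it is presently known only in low dimension or in the presence of extra structure (curves, by Siegel and Faltings; subvarieties and complements of divisors in semi-abelian varieties, by Faltings, Vojta and Noguchi; complements of sufficiently many or sufficiently ample divisors, by work of Corvaja--Zannier, Autissier, Levin, and others) --- precisely the range in which this paper obtains its unconditional finiteness results. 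The realistic outcome of the plan is therefore the equivalence of Conjecture~\ref{LangConj2} with its number-field case, together with an unconditional proof of the conjecture for every class of hyperbolically embeddable varieties for which that number-field case is already available.
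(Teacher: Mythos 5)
The statement you were asked to prove is not a theorem of the paper: it is Lang's conjecture (Conjecture~\ref{LangConj2}), quoted from \cite{LangHyperbolic} and \cite{LangSurvey}, and the paper offers no proof of it. The paper's only claim in its direction is the remark following Theorem~\ref{thm:hyp_emb1}: for a \emph{smooth} affine variety over $\Qbar$ (admitting a smooth projective compactification with divisor boundary into which it is hyperbolically embedded), the conjecture reduces to its number-field case, i.e.\ to arithmetic hyperbolicity over $\Qbar$. Your proposal correctly recognizes that the full statement is open, and your final conclusion --- equivalence with the number-field case, plus unconditional results exactly where that case is known --- is the same endpoint the paper reaches. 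To that extent your assessment is accurate and honest.

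That said, your ``unconditional'' stage (i) is not actually unconditional as written, and it diverges from the paper's mechanism in ways that matter. First, you assert that because any two embeddings of the finitely generated field $k_1$ into $\CC$ differ by an automorphism of $\CC$, hyperbolic embeddability is ``intrinsic to $X_1$.'' The paper explicitly warns that this is \emph{not known}: it is an open problem whether conjugates of a hyperbolically embeddable variety remain hyperbolically embeddable. (This is precisely why the paper's notion of being ``hyperbolically embedded'' over $k$ is defined via the \emph{existence} of one embedding $k_0\to\CC$, and why Conjecture~\ref{LangConj2} is phrased existentially.) Second, your Lang-style specialization to number-field points, which requires ``openness of hyperbolic embeddability in families,'' is neither established in the paper nor needed: the whole point of Theorem~\ref{thm:hyp_emb1} is that the passage from arithmetic hyperbolicity over $\Qbar$ to arbitrary algebraically closed fields is achieved \emph{without} specialization, by combining the logarithmic Urata theorem (Theorem~\ref{thm:urata_affine}, giving geometric hyperbolicity) with Theorem~\ref{thm:geometricity_arhyp}; layering a specialization argument on top of Theorem~\ref{thm:hyp_emb1} is redundant where it works and unjustified where it does not. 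Third, both the paper's reduction and yours require a smooth projective compactification with divisor boundary realizing the hyperbolic embedding, whereas the conjecture allows an arbitrary affine variety hyperbolically embedded in an arbitrary projective compactification; this gap is passed over silently in your stage (i). So the correct summary is: the paper proves a \emph{reduction} of (a smooth case of) the conjecture to number fields, your proposal aims at the same reduction by a partly different and partly flawed route, and neither you nor the paper proves the conjecture itself.
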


We stress that, given a Kobayashi hyperbolic  (resp. hyperbolically embeddable) variety over $\mathbb{C}$, it is not known whether the conjugates of $X$ are Kobayashi hyperbolic (resp. hyperbolically embeddable) in general.  

As a consequence of our main result, if $X$ is a smooth affine variety  over $\Qbar$, then to prove Conjecture \ref{LangConj2} it suffices to prove that $X$ is arithmetically hyperbolic over $\Qbar$ (i.e., it suffices to prove finiteness of integral points in the classical case, over rings of $S$-integers of number fields).

It is known that, in general, for quasi-projective varieties the three properties of \eqref{hyps} are inequivalent.  However, Lang has raised the question of which properties are equivalent in the affine case (e.g., \cite[p.~80]{LangHyperbolic} and \cite[pp.~225--226]{LangSurvey}). We give a partial answer to Lang's question, showing that there are smooth affine varieties which are Brody hyperbolic but not hyperbolically embeddable; see Theorem \ref{thm:finale2} below.

We follow \cite{KobayashiBook} and say that a reduced complex analytic space $\mathfrak{X}$ is Kobayashi hyperbolic if the Kobayashi pseudo-metric $d_{\mathfrak{X}}$ is a metric on $\mathfrak{X}$. A   variety $X$ over $\CC$  is Kobayashi hyperbolic if the analytification $X_{red}^{\an}$  of the reduced scheme $X_{red}$ is Kobayashi hyperbolic.

 Let $\overline{S}$ be a smooth projective variety over $\CC$ and let $S\subset \overline{S}$ be a dense open subscheme. We say that $S$ is \emph{hyperbolically embedded in $\overline{S}$} if the inclusion $S^{\an}\subset \overline{S}^{\an}$ of the complex analytic subspace $S^{\an}$ in $\overline{S}^{\an}$ is a hyperbolic embedding \cite[Chapter~3.3]{KobayashiBook}; note that this implies that $S$ is Kobayashi hyperbolic.
   
  A smooth affine variety $S$ over $\CC$ is \emph{hyperbolically embeddable} if there is a  projective  variety $\overline{S}$ and an open immersion $S\to \overline{S}$   such that $S$ is hyperbolically embedded in $\overline{S}$.    Note that, if $S$ is a hyperbolically embeddable variety,  then $S$ is Kobayashi (hence Brody) hyperbolic. A hyperbolically embeddable variety shares many common features with proper Kobayashi hyperbolic varieties, as is shown for instance in \cite{JKuch, KobayashiBook, Kwack}.

  We extend the above definitions to varieties over $k$ (an arbitrary algebraically closed field of characteristic zero). If $\overline{X}$ is a   proper variety over $k$ and $D$ is a closed subset of $\overline{X}$, then we will say that $X:=\overline{X}\setminus D$  is \emph{(weakly-)hyperbolically embedded in $\overline{X}$} if there is a subfield $k_0\subset k$, an embedding $k_0\to \mathbb{C}$,   a model $\overline{\mathcal{X}}$ for $\overline{X}$ over $k_0$, and a model $\mathcal{D}\subset \overline{\mathcal{X}}$ for $D\subset \overline{X}$ over $k_0$ such that   $ \overline{\mathcal{X}}_{\CC}\setminus \mathcal{D}_{\CC}$ is hyperbolically embedded in $\overline{\mathcal{X}}_{\CC}$.  This means, in particular, that $\mathcal{X}_{\CC}$ is Kobayashi hyperbolic (with respect to the fixed embedding $k_0\to \mathbb{C}$). We stress that this notion depends \emph{a priori} on the chosen embedding. As we are interested in algebraic properties of (weakly-)hyperbolically embedded varieties, this is harmless for our purposes.

\begin{theorem}[Main Result, I]\label{thm:hyp_emb1} Let $\overline{X}$ be a smooth projective      variety  over $k$,   and let $D\subset \overline{X}$ be a  divisor such that $X:=\overline{X}\setminus D$ is hyperbolically embedded in $\overline{X}$. 
 If $X$ is arithmetically hyperbolic over $k$, then   $X$ is \textbf{absolutely} arithmetically hyperbolic.
\end{theorem}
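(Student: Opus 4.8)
The plan is to reduce, by spreading out and a tower argument over function fields of curves, to a finiteness statement for morphisms from a single smooth affine curve into $X$, and then to invoke the logarithmic version of Urata's theorem established elsewhere in this paper: for a smooth affine curve $C^\circ$ over $k$, a point $c_0\in C^\circ(k)$, and a point $x_0\in X(k)$, there are only finitely many morphisms $C^\circ\to X$ sending $c_0$ to $x_0$.

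First I would fix a $\ZZ$-finitely generated subring $A\subset k$ and a finite type separated $A$-scheme $\mathcal X$ with $\mathcal X_k\cong X$ and $\mathcal X(A')$ finite for all $\ZZ$-finitely generated $A\subseteq A'\subset k$; enlarging $A$, we may assume $\overline X$ and $D$ are defined over $A$ as well. For any algebraically closed field $L\supseteq k$ the scheme $\mathcal X$ is still a model of $X_L$, so $X_L$ is arithmetically hyperbolic over $L$ as soon as $\mathcal X(A')$ is finite for every $\ZZ$-finitely generated $A\subseteq A'\subset L$. For such an $A'$ put $d:=\mathrm{trdeg}(\mathrm{Frac}(A')/k)<\infty$; choosing a transcendence basis $t_1,\dots,t_d\in A'$ of $\mathrm{Frac}(A')/k$ and letting $k_d\subset L$ be the algebraic closure of $k(t_1,\dots,t_d)$, one has $A'\subset k_d$. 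Since $\overline X,D$ are defined over $k\subset k_d$ and $X$ remains hyperbolically embedded in $\overline X$ over $k_d$ (same complex model), it therefore suffices to prove the single statement ``if $X$ is arithmetically hyperbolic over $k$ then $X_{k_1}$ is arithmetically hyperbolic over $k_1:=\overline{k(t)}$'', and then to apply it $d$ times along $k\subset k_1\subset\cdots\subset k_d$.

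So assume for contradiction that $X_{k_1}$ is not arithmetically hyperbolic over $k_1$; then there is a $\ZZ$-finitely generated subring $A\subseteq A'\subset k_1$ with $\mathcal X(A')$ infinite. The field $\mathrm{Frac}(A')$ is finitely generated over $k$ of transcendence degree at most $1$, and it cannot be algebraic over $k$ (otherwise $A'\subset k$ and $\mathcal X(A')$ would be finite). Hence $\mathrm{Frac}(A')=k(C^\circ)$ for a smooth affine curve $C^\circ$ over $k$, and after shrinking $C^\circ$ we may assume $A'\subseteq\mathcal O(C^\circ)$ compatibly with $A\subset k\subset\mathcal O(C^\circ)$. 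Composing any $P\in\mathcal X(A')=\Hom_A(\Spec A',\mathcal X)$ with the schematically dominant morphism $C^\circ=\Spec\mathcal O(C^\circ)\to\Spec A'$ and using the structure morphism to $\Spec k$, one obtains a $k$-morphism $f_P\colon C^\circ\to\mathcal X\times_A k=X$; since $\mathcal X$ is separated over $A$, the assignment $P\mapsto f_P$ is injective, so $\{f_P\}_{P\in\mathcal X(A')}$ is an infinite set of morphisms $C^\circ\to X$. Now fix $c_0\in C^\circ(k)$: evaluation at $c_0$ is a ring map $A'\to k$ over $A$ whose image, enlarged once to absorb finitely many affine charts and localizations, is a $\ZZ$-finitely generated $R'\subset k$ with $f_P(c_0)\in\mathcal X(R')$ for all $P$. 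As $\mathcal X(R')$ is finite while $\mathcal X(A')$ is infinite, there is $x_0\in X(k)$ with $f_P(c_0)=x_0$ for infinitely many $P$. But the logarithmic Urata theorem says the set of morphisms $C^\circ\to X$ sending $c_0$ to $x_0$ is finite, contradicting injectivity of $P\mapsto f_P$. Hence $X_{k_1}$ is arithmetically hyperbolic over $k_1$, and therefore $X$ is absolutely arithmetically hyperbolic.

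The genuinely hard step is the logarithmic Urata theorem itself: passing from a compact hyperbolic target to a hyperbolically embedded (non-compact) one should require replacing the normal-families/compactness arguments on the target in Urata's original proof with the machinery of hyperbolic embeddings and Kwack-type extension theorems across the boundary divisor $D$. Granting that, the only delicate point in the deduction above is the field-of-definition bookkeeping in the last paragraph, which is precisely what forces all the values $f_P(c_0)$ to lie over a single finitely generated subring of $k$, so that the arithmetic hyperbolicity of $X$ over $k$ can be brought to bear.
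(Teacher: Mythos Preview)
Your argument is correct and follows the same strategy as the paper: both deduce absolute arithmetic hyperbolicity from the logarithmic Urata theorem (Theorem~\ref{thm:urata_affine}) together with a specialization argument. The paper packages the specialization step as a citation of Theorem~\ref{thm:geometricity_arhyp} (proved in \cite{JAut, JLitt}), whose content is precisely the tower-over-function-fields reduction you wrote out; so your proposal is an unpacking of the paper's two-line proof rather than a genuinely different route.

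One small slip worth fixing: since $A'$ is only $\ZZ$-finitely generated it need not contain $k$, so the sentence ``$\operatorname{Frac}(A')=k(C^\circ)$'' (and the phrase ``$\operatorname{trdeg}(\operatorname{Frac}(A')/k)$'') is not literally correct. The clean repair is to pass to the finitely generated $k$-algebra $A'':=k[A']\subset k_1$, an integral domain of Krull dimension at most $1$; take $C^\circ$ to be a smooth dense open of $\Spec A''$ and use the injections $A'\hookrightarrow A''\hookrightarrow \mathcal{O}(C^\circ)$ to transport the infinitely many $P\in\mathcal{X}(A')$ to pairwise distinct $k$-morphisms $C^\circ\to X$. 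Your evaluation step then works verbatim: the composite $A'\to\mathcal{O}(C^\circ)\xrightarrow{c_0}k$ has $\ZZ$-finitely generated image $R'\subset k$ containing $A$, each $f_P(c_0)$ lies in $\mathcal{X}(R')$, arithmetic hyperbolicity over $k$ gives finiteness of $\mathcal{X}(R')$, pigeonhole produces a common value $x_0$, and Theorem~\ref{thm:urata_affine} supplies the contradiction.
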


  To prove Theorem \ref{thm:hyp_emb1} we work with the notion of geometric hyperbolicity as defined in \cite{vBJK,  JBook, JLitt, JXie}. 
 A variety $X$  over $k$ is \emph{geometrically hyperbolic over $k$} if, for every smooth integral curve $C$ over $k$, every $c$ in $C(k)$, and every $x$ in $X(k)$, the set $\Hom_k((C,c),(X,x))$ of morphisms  $f:C\to X$ with $f(c)=x$ is finite.  As is explained in \cite[Remark~11.3]{JBook}, one may consider this property as a ``function field'' analogue of arithmetic hyperbolicity. In this paper we will exploit the properties of a variety which is simultaneously arithmetically hyperbolic and geometrically hyperbolic.

 To explain our proof of Theorem \ref{thm:hyp_emb1}, we note that 
 the following   result, obtained in \cite{JAut, JLitt},   implies that the Persistence Conjecture  holds for all varieties $X$ over $\Qbar$ such that $X_{\CC}$ is geometrically hyperbolic over $\CC$. This result has, for example, already been successfully applied to proving the Persistence Conjecture for varieties which admit a quasi-finite period map in \cite{JLitt}.

 \begin{theorem}\label{thm:geometricity_arhyp}
 Let $k$ be an algebraically closed subfield of $\CC$, and let $X$ be an arithmetically hyperbolic variety over $k$ such that  $X\otimes_k \CC$ is geometrically hyperbolic over $\CC$. Then,  the variety $X $ is \textbf{absolutely} arithmetically hyperbolic.
 \end{theorem}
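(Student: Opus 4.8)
The plan is to combine a Lang-style specialisation argument with geometric hyperbolicity: arithmetic hyperbolicity over $k$ controls integral points lying in a \emph{fixed} $\ZZ$-finitely generated ring, while geometric hyperbolicity over $\CC$ (which descends to every algebraically closed field we will encounter) controls families of maps from curves, and the two interlock after specialising.

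First I would make three standard reductions. (i) Using that arithmetic hyperbolicity is independent of the choice of $\ZZ$-finitely generated ring and model, it suffices to prove that $X_M$ is arithmetically hyperbolic over $M$ for every algebraically closed $M/k$ of finite transcendence degree (for a $\ZZ$-finitely generated $B'\subseteq L$, apply this to the algebraic closure of $k(B')$ in $L$). (ii) Fixing a model $\mathcal{X}/A$ with $A\subseteq k$ a $\ZZ$-finitely generated domain, I would replace $k$ by the algebraic closure $k_1$ of $\mathrm{Frac}(A)$ inside $k$ and $X$ by $\mathcal{X}\otimes_A k_1$; both hypotheses persist and the conclusion is unchanged, so we may assume $k=\overline{\mathrm{Frac}(A)}$ (in particular $k$ is countable). (iii) For any such $M/k$ there is an embedding $M\hookrightarrow\CC$ extending $A\hookrightarrow\CC$, so $X_M\otimes_M\CC=X_\CC$ is geometrically hyperbolic; since geometric hyperbolicity descends along field extensions (base change preserves distinctness of morphisms from a fixed pointed curve), $X_M$ is geometrically hyperbolic over $M$. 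It then remains to show, by induction on $d=\operatorname{trdeg}_k L$, that $X_L$ is arithmetically hyperbolic over $L$ for all algebraically closed $L/k$ with $\operatorname{trdeg}_k L = d$; the case $d=0$ is the hypothesis.

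For the inductive step, suppose $X_L$ is not arithmetically hyperbolic over $L$ with $\operatorname{trdeg}_k L = d+1$. Then $\mathcal{X}(B)$ is infinite for some $\ZZ$-finitely generated $A\subseteq B\subseteq L$; after enlarging $A$ and $B$ inside $L$ we may assume $\mathrm{Frac}(A)$ is algebraically closed in $\mathrm{Frac}(B)$, so that $Y:=\Spec(B\otimes_A k)$ is an integral $k$-variety of dimension $e=\operatorname{trdeg}_{\mathrm{Frac}(A)}\mathrm{Frac}(B)$ with $1\le e\le d+1$ (if $e=0$ then $B\subseteq k$ and $\mathcal{X}(B)$ is already finite, a contradiction). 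The infinitely many points of $\mathcal{X}(B)$ yield infinitely many distinct morphisms $\Psi_i\colon Y\to X=\mathcal{X}\otimes_A k$. The crucial case is $e=1$: then $Y$ is a curve; passing to its normalisation $\nu\colon\widetilde Y\to Y$, a smooth affine curve over $k$, and setting $\phi_i=\Psi_i\circ\nu$ we still have infinitely many distinct morphisms, now from a smooth affine curve. Fix $c\in\widetilde Y(k)$. If $\{\phi_i(c)\}_i$ is finite, then after passing to an infinite subsequence the $\phi_i$ give infinitely many distinct elements of $\Hom_k((\widetilde Y,c),(X,x))$ for a fixed $x$, contradicting geometric hyperbolicity of $X$ over $k$. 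If $\{\phi_i(c)\}_i$ is infinite, observe that $c$ induces a homomorphism $B\to k$ whose image is a fixed $\ZZ$-finitely generated ring $A'$ with $A\subseteq A'\subseteq k$, and $\phi_i(c)\in\mathcal{X}(A')$ for all $i$, contradicting arithmetic hyperbolicity of $X$ over $k$. To reduce $e\ge2$ to this case I would drop the relative transcendence degree one step at a time: choose $t\in B$ transcendental over $\mathrm{Frac}(A)$, let $A_1$ be the integral closure of $A[t]$ in the algebraic closure of $\mathrm{Frac}(A)(t)$ in $\mathrm{Frac}(B)$, and put $k_2=\overline{\mathrm{Frac}(A_1)}\subseteq L$. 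Then $\mathcal{X}\otimes_A A_1$ is a model of $X_{k_2}$ over $A_1$, $\mathcal{X}(B)$ is still infinite, $\operatorname{trdeg}_{\mathrm{Frac}(A_1)}\mathrm{Frac}(B)=e-1$, and $\operatorname{trdeg}_k k_2 = 1$. Iterating $e-1$ times lands in the $e=1$ situation over a field $k_e$ with $\operatorname{trdeg}_k k_e = e-1\le d$, where $X_{k_e}$ is arithmetically hyperbolic by the induction hypothesis and geometrically hyperbolic by step (iii); the $e=1$ argument then yields the contradiction and closes the induction.

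The part I expect to be most delicate is the reduction from $e\ge2$ to $e=1$. The bound $e\le d+1$ forces the base field to gain exactly one transcendence degree per step, so one cannot afford to cut $Y$ down to a curve by ``generic'' hyperplane sections — whose coefficients would consume too many transcendentals — and must instead carry out the bookkeeping above: keeping models over $\ZZ$-finitely generated rings, ensuring all auxiliary fields still embed compatibly into $\CC$ so that geometric hyperbolicity survives by descent, and verifying that each base change preserves pairwise distinctness of the $\Psi_i$ while the specialisation of an integral point always lands in a \emph{fixed} finitely generated ring. It is exactly this last feature — a fixed target ring on the arithmetic side together with finiteness of pointed maps on the geometric side — that makes the two hypotheses combine.
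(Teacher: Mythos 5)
The paper does not prove Theorem \ref{thm:geometricity_arhyp} itself but imports it from \cite{JAut, JLitt}, and your argument is essentially the specialization proof given there: reduce to finite transcendence degree, induct, realize infinitely many $B$-points as infinitely many maps from the (normalized) curve $\Spec(B\otimes_A k)$, and at a chosen $k$-point play arithmetic hyperbolicity (the images land in a fixed $\ZZ$-finitely generated ring) against geometric hyperbolicity (finitely many pointed maps). Your proposal is correct; the only points left implicit (independence of arithmetic hyperbolicity from the choice of model and ring, integrality of $B\otimes_A k$ after arranging $\operatorname{Frac}(A)$ algebraically closed in $\operatorname{Frac}(B)$, and descent of geometric hyperbolicity along an embedding into $\CC$) are exactly the standard lemmas the cited references also rely on.
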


  Theorem \ref{thm:geometricity_arhyp} implies that, to prove Theorem \ref{thm:hyp_emb1},    it suffices to show the geometric hyperbolicity of a hyperbolically embeddable variety. Towards this end, we first consider a classical finiteness theorem of Urata for \emph{proper} varieties; see      \cite[Theorem~5.3.10]{KobayashiBook} (or the original \cite{Urata}). 
  
 \begin{theorem}[Urata]\label{thm:urata}
 If $X$ is a  proper Brody hyperbolic variety over $\CC$, $Y$ is an integral variety over $\CC$, $y\in Y(\CC)$, and $x\in X(\CC)$, then    the set of morphisms $f:Y\to X$ with $f(y) = x$ is finite.    In particular, the variety $X$ is geometrically hyperbolic over $\CC$.
 \end{theorem}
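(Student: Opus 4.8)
The plan is to view the set in question as a compact complex space and then show it is zero-dimensional. Since $X$ is proper and Brody hyperbolic, Brody's theorem shows that the compact space $X^{\an}$ is Kobayashi hyperbolic (see \cite[Chapter~3]{KobayashiBook}). A morphism $f\colon Y\to X$ with $f(y)=x$ is in particular a holomorphic map $Y^{\an}\to X^{\an}$ with this property, so it suffices to prove that
\[
Z:=\{\,g\in \mathrm{Hol}(Y^{\an},X^{\an}) : g(y)=x\,\}
\]
is finite. (If one prefers to remain algebraic, one first replaces $Y$ by a projective compactification: every $f\colon Y\to X$ extends to a morphism on the compactification — trivially by the valuative criterion when $Y$ is a curve, which is the only case needed for Theorem~\ref{thm:hyp_emb1}, and in general by Kwack's big‑Picard‑type extension theorem \cite{Kwack} followed by GAGA — after which $Z$ is identified with the set of $\CC$‑points of the closed subscheme of $\Hom(Y,X)$ cut out by $f(y)=x$.)

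The first step is to see that $Z$ is a compact complex space. Every holomorphic map into a Kobayashi hyperbolic space is distance‑decreasing for the Kobayashi pseudodistances, and since $X^{\an}$ is compact hyperbolic its Kobayashi distance induces the topology; hence $\mathrm{Hol}(Y^{\an},X^{\an})$ is equicontinuous and takes values in the compact space $X^{\an}$, so by Arzel\`a--Ascoli it is relatively compact in the compact‑open topology, and being closed there (a locally uniform limit of holomorphic maps is holomorphic) it is compact, and so is its closed subset $Z$. Moreover $\mathrm{Hol}(Y^{\an},X^{\an})$, hence $Z$, carries a natural complex structure for which the evaluations $\ev_p\colon \mathrm{Hol}(Y^{\an},X^{\an})\to X^{\an}$ ($p\in Y$) are holomorphic (see \cite[Chapter~5]{KobayashiBook}); since these are then distance‑decreasing and collectively separate points, $Z$ is itself a compact hyperbolic complex space. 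In particular $Z$ has finitely many connected components, and it is finite as soon as $\dim Z=0$.

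The remaining point — that $\dim Z=0$ — is the crux, and I expect it to be the main obstacle. If $Z$ had a positive‑dimensional component through some $f_0$, one would obtain a non‑constant holomorphic family $f_t\colon Y\to X$, with $t$ in a disk, such that $f_t(y)=x$ for all $t$ and $t\mapsto f_t(p_1)$ is non‑constant for some $p_1\in Y$. The goal is then to extract from this family a non‑constant holomorphic map $\CC\to X$, contradicting Brody hyperbolicity; a natural first move is to precompose with a holomorphic disk $u\colon\Delta\to Y$ with $u(0)=y$ to reduce to a family parametrized by a bidisk that is constant along one axis, and then to attempt a Brody‑type rescaling near the pinned locus. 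This is precisely the argument carried out in \cite[Theorem~5.3.10]{KobayashiBook} (and originally in \cite{Urata}), and it is the genuine difficulty — it is the analogue, for positive‑dimensional families of pointed maps, of the classical fact that a positive‑dimensional family of automorphisms of a compact complex space produces a non‑zero global holomorphic vector field whose flow is a non‑constant map out of $\CC$ (or $\CC^{*}$), which cannot exist on a Brody hyperbolic space. Granting this, $Z$ is a compact zero‑dimensional complex space, hence finite, which is the assertion; the final sentence of the theorem is the special case in which $Y$ is a smooth integral curve. For the purposes of this paper one may simply invoke \cite[Theorem~5.3.10]{KobayashiBook} directly in the curve case.
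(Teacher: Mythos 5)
The paper offers no proof of this theorem beyond citing \cite[Theorem~5.3.10]{KobayashiBook} and the original \cite{Urata}, and your proposal ultimately rests on that same citation, so the two treatments agree. Your surrounding outline (compactness of the pointed mapping space via the distance-decreasing property, Barth's theorem, and Arzel\`a--Ascoli, followed by zero-dimensionality via a Brody-type rescaling) is a faithful sketch of the classical argument and correctly isolates the zero-dimensionality step as the one piece you defer to the reference rather than prove.
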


Urata's proof uses the \emph{properness} of $X$ in a crucial way, and it is currently not known whether a Kobayashi hyperbolic quasi-projective variety over $\CC$ is geometrically hyperbolic over $\CC$. Indeed,  when transporting Urata's arguments to the non-proper setting one is confronted with several technical difficulties. Our next result shows that one can prove  the expected finiteness property for hyperbolically embedded varieties, under suitable assumptions.

\begin{theorem}[Main Result, II]\label{thm:urata_affine}
Let $\overline{X}$ be a smooth projective      variety  over $k$ and let $D\subset \overline{X}$ be a   divisor such that $X:=\overline{X}\setminus D$ is hyperbolically embedded in $\overline{X}$. 
If $Y$ is an integral variety over $k$, $y\in Y(k)$, and $x\in X(k)$, then the set of morphisms $f:Y\to X$ with $f(y) = x$ is finite.    In particular, the variety $X$ is geometrically hyperbolic over $k$.
\end{theorem}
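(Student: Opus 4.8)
The plan is to reduce to the case $k=\CC$ and then run a version of Urata's argument adapted to hyperbolically embedded varieties; the essential new ingredients are Kwack's extension theorem and the fact that, for a hyperbolic embedding $X\subseteq\overline X$ with $\overline X$ proper, a locally uniform limit of holomorphic maps into $X$ has image either entirely inside $X$ or entirely inside the boundary $\overline X\setminus X$, never meeting both. For the \textbf{reductions}, spread $\overline X$, $D$, $Y$, $y$, $x$ out over a finitely generated subfield $k_1\subseteq k$, which after enlarging we may assume contains the field $k_0$ of the definition of hyperbolic embeddability and admits a compatible embedding into $\CC$. Using that morphisms $Y\to X$ extend canonically to morphisms $\overline Y\to\overline X$ (by Kwack's theorem; see below), together with uniqueness and descent, and that $\underline{\Hom}(\overline Y,\overline X)$ is locally of finite type, $\Hom_k((Y,y),(X,x))$ is identified with $\mathcal{H}(k)$ for a fixed $k_1$-scheme $\mathcal{H}$, locally of finite type, with $\mathcal{H}_\CC(\CC)=\Hom_\CC((Y_\CC,y),(X_\CC,x))$. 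Since a $\CC$-scheme locally of finite type with finitely many $\CC$-points is finite, it suffices to prove that $\Hom_\CC((Y_\CC,y),(X_\CC,x))$ is a finite set. Finally, composing with an alteration and then using Chow's lemma and resolution of singularities, we may assume $Y$ is smooth and we fix a smooth projective compactification $Y\subseteq\overline Y$ with $B:=\overline Y\setminus Y$ a simple normal crossings divisor.

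\textbf{A compact space of holomorphic maps.} Every algebraic morphism $Y\to X$ is holomorphic; conversely, given a holomorphic $f\colon Y^{\an}\to X^{\an}$, Kwack's extension theorem for the hyperbolic embedding $X^{\an}\subseteq\overline X^{\an}$ (applicable because $B$ is simple normal crossings) extends $f$ to a holomorphic map $\overline f\colon \overline Y^{\an}\to\overline X^{\an}$, which is algebraic by GAGA (both $\overline Y$ and $\overline X$ being projective) and whose restriction to the Zariski-open $Y$ is then a morphism $Y\to X$. Thus $\Hom_\CC((Y_\CC,y),(X_\CC,x))=\mathrm{Hol}((Y^{\an},y),(X^{\an},x))$ as sets. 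Moreover this set is \emph{compact} for the compact-open topology: since $X^{\an}\subseteq\overline X^{\an}$ is a hyperbolic embedding and $\overline X^{\an}$ is compact, $\mathrm{Hol}(Y^{\an},X^{\an})$ is relatively compact in $\mathrm{Hol}(Y^{\an},\overline X^{\an})$, and by the limit statement recalled above the condition $f(y)=x\in X^{\an}$ forces every limit of maps into $X^{\an}$ to again land in $X^{\an}$, so the set in question is closed inside a relatively compact set.

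\textbf{Finiteness, Urata-style.} Via the extensions $f\mapsto\overline f$, the compact set $S:=\Hom_\CC((Y_\CC,y),(X_\CC,x))$ sits inside $\underline{\Hom}(\overline Y,\overline X)(\CC)$; being compact it meets only finitely many connected components and hence lies in a subscheme of finite type. If $S$ were infinite it would contain a convergent sequence of distinct points, so its Zariski closure would have positive dimension, and from it one extracts a non-isotrivial algebraic family $F\colon \widetilde{\mathcal C}\times Y\to X$ over a smooth curve with $F|_{\widetilde{\mathcal C}\times\{y\}}\equiv x$ (the values lie in $X$, not merely in $\overline X$, precisely because the members of $S$ are morphisms to $X$). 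Since $Y$ is irreducible, every point of $Y$ is joined to $y$ by an irreducible curve, so for a suitable curve $C_0\subseteq Y$ through $y$ the restricted family stays non-isotrivial; after normalizing $C_0$ we are reduced to the case $\dim Y=1$, where one runs Urata's rigidity argument: the compact space $\mathrm{Hol}((C^{\an},c),(X^{\an},x))$ carries the structure of a hyperbolic complex space, and a positive-dimensional family through a fixed base point contradicts its hyperbolicity. This contradiction proves that $S$ is finite; the displayed ``in particular'' is the case $Y=C$.

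\textbf{The main obstacle} is exactly this last step: making Urata's proper-case argument survive the passage to non-proper $X$. A priori a limit, or a deformation, of morphisms $Y\to X$ is only a morphism to the compactification $\overline X$ and might degenerate into $D$; it is the hypothesis that $X$ is \emph{hyperbolically embedded} --- strictly stronger than Brody or Kobayashi hyperbolicity --- that rules this out, via the extension theorems of Kwack and Kobayashi, and thereby upgrades the relative-compactness and ``one-sided limit'' statements into the genuine compactness and algebraicity the argument requires. Keeping careful track of this boundary behaviour, and of the back-and-forth between holomorphic and algebraic maps, is where the bulk of the work lies.
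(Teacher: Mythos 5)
Your reductions (spreading out, passing to $\CC$, using Kwack/Kiernan--Kobayashi--Kwack to extend maps to the compactifications, and the compactness of the pointed mapping space coming from the hyperbolic embedding) are all in the spirit of what the paper does, though the paper organizes things differently: it first proves the curve case over $\CC$ and then passes to arbitrary integral $Y$ by an uncountability argument (Lemma \ref{lem1}), whereas you attack general $Y$ directly and only reduce to curves inside the contradiction. That architectural difference is harmless.

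The genuine gap is in your final step, which is exactly where the difficulty lives. From compactness of $S=\Hom_{\CC}((Y,y),(X,x))$ you still need \emph{discreteness}, and your stated justification --- that ``the compact space $\mathrm{Hol}((C^{\an},c),(X^{\an},x))$ carries the structure of a hyperbolic complex space, and a positive-dimensional family through a fixed base point contradicts its hyperbolicity'' --- is not a valid deduction: a hyperbolic complex space can perfectly well be positive-dimensional and receive non-constant maps from affine curves, so producing a non-isotrivial family mapping into it contradicts nothing. (This is also not what Urata's proper-case argument does; his rigidity step uses distance-decreasing properties of the evaluation map together with compactness and connectedness of components of the fiber of $\ev_y$, and transporting that step to non-proper $X$ is precisely the issue.) The paper closes this gap with two specific inputs you never invoke: Pacienza--Rousseau's logarithmic algebraic hyperbolicity (Theorem \ref{thm:pr}), which gives an \emph{algebraic} degree bound and hence that $\Hom_{\CC}(C,X)$ is a quasi-compact constructible subset of $\underline{\Hom}_{\CC}(\overline{C},\overline{X})(\CC)$, and Kobayashi's theorem \cite[Theorem 6.4.10.(4)]{KobayashiBook} that for a hyperbolically embedded target the universal evaluation map $\overline{C}\times\overline{\mathcal F}\to\overline{C}\times\overline{X}$ has \emph{finite fibers} on the closure of such a family; the latter is what actually yields finiteness of the pointed mapping set. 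Your analytic compactness claim can stand in for the first input, but nothing in your proposal substitutes for the second. A secondary soft spot: the curve you extract lies in the Zariski closure of $S$, whose points need not parametrize maps landing in $X$ (only a constructible, generically dense locus does), so the parenthetical assertion that ``the values lie in $X$'' needs an argument; this is patchable, unlike the rigidity step.
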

  
   The proof of Theorem \ref{thm:hyp_emb1} is  thus  achieved by combining our    extension of Urata's theorem (Theorem \ref{thm:urata_affine}) with Theorem \ref{thm:geometricity_arhyp}. To prove Theorem \ref{thm:urata_affine} we use Kobayashi's result on the compactness of the moduli space of maps from a curve  to $X$ and a result of Pacienza-Rousseau on the logarithmic-algebraic hyperbolicity of $X$; see Section \ref{section:2}.
  
  As a first application of Theorem \ref{thm:hyp_emb1} we consider a finiteness result of the second-named author for varieties over number fields \cite[Th.~6.1A(b), Th.~6.2A(d)]{LevinAnnals}, \cite[Th.~1.4]{HL} (after work of Corvaja-Zannier \cite{CZ2,CZ3,CZ4}; see also work of Autissier \cite{Autissier1, Autissier2}), and extend the result to finitely generated fields.  
  
  \begin{theorem}[Main Result, III]\label{thm:III}
Let $m$ be a positive integer, let $X$ be a smooth projective connected variety over $\Qbar$ with $\dim X>1$, and let $D=\sum_{i=1}^rD_i$ be a sum of $r$ ample effective divisors on $X$ such that at most $m$ of the divisors $D_i$ meet in a point.  Suppose that $r\geq 2m \dim(X)$ (or $r\geq 5$ if $m=\dim X=2$).  Then the affine variety  $X\setminus D$ is absolutely arithmetically hyperbolic.
  \end{theorem}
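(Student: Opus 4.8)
The plan is to obtain Theorem \ref{thm:III} as a formal consequence of Theorem \ref{thm:hyp_emb1}: it suffices to check that $X\setminus D$ is (i) hyperbolically embedded in $X$ and (ii) arithmetically hyperbolic over $\Qbar$, and then to apply that theorem with $\overline{X}=X$ (which is smooth projective) and the divisor $D$. Note first that, since each $D_i$ is ample and effective, the divisor $D=\sum_{i=1}^r D_i$ is ample and effective, so that $X\setminus D$ is indeed a smooth affine variety.

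For (i) I would fix a number field $k_0\subset\Qbar$ over which $X$ and all the $D_i$ are defined, together with an embedding $k_0\hookrightarrow\CC$, and let $\mathcal{X}$, $\mathcal{D}_i$, $\mathcal{D}=\sum_i\mathcal{D}_i$ be the corresponding $k_0$-models. Ampleness, effectivity, and the condition ``at most $m$ of the divisors meet at any point'' are all preserved under the flat base change $k_0\to\CC$, so on the smooth projective complex variety $\mathcal{X}_\CC$ the divisor $\mathcal{D}_\CC$ is again a sum of $r$ ample effective divisors with at most $m$ of them meeting at any point, with $r\geq 2m\dim X$ (or $r\geq 5$ in the case $m=\dim X=2$). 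Green's criterion (Theorem \ref{thm:green}) then shows that $\mathcal{X}_\CC\setminus\mathcal{D}_\CC$ is hyperbolically embedded in $\mathcal{X}_\CC$, which is precisely the assertion that $X\setminus D$ is hyperbolically embedded in $X$ as a variety over $\Qbar$ with respect to the chosen embedding.

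For (ii) I would invoke the finiteness of integral points over number fields proved in \cite[Th.~6.1A(b), Th.~6.2A(d)]{LevinAnnals} and \cite[Th.~1.4]{HL}, whose hypotheses are exactly those of Theorem \ref{thm:III}. After enlarging $k_0$ and inverting finitely many primes one may spread out to a model $\mathcal{U}_0=\mathcal{X}_0\setminus\mathcal{D}_0$ of $X\setminus D$ over a ring of $S_0$-integers $\OO_{k_0,S_0}$, which is a $\ZZ$-finitely generated subring of $\Qbar$. Given any $\ZZ$-finitely generated subring $A'\subset\Qbar$ containing $\OO_{k_0,S_0}$, there are a number field $L\subset\Qbar$ and a finite set $T$ of finite places of $L$ with $\OO_{k_0,S_0}\subseteq A'\subseteq\OO_{L,T}$; then $\mathcal{U}_0(A')\subseteq\mathcal{U}_0(\OO_{L,T})$, and the latter set is the set of $\mathcal{D}$-integral points of $X$ over $\OO_{L,T}$, which is finite by the cited results. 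Hence $X\setminus D$ is arithmetically hyperbolic over $\Qbar$, and Theorem \ref{thm:hyp_emb1} now yields that $X\setminus D$ is absolutely arithmetically hyperbolic.

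The argument is thus an assembly of known inputs; the points requiring care are the bookkeeping of models and the passage from rings of $S$-integers of number fields to arbitrary $\ZZ$-finitely generated subrings of $\Qbar$ (a standard spreading-out argument, cf.\ \cite{JBook}), and, more substantively, verifying that the numerical hypotheses of Theorem \ref{thm:III} — including the sharper surface case $r\geq 5$ with $m=\dim X=2$ — simultaneously meet the requirements of Green's criterion and those of the arithmetic finiteness theorems of \cite{LevinAnnals,HL}.
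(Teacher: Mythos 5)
Your overall architecture is the same as the paper's: verify that $X\setminus D$ is hyperbolically embedded in $X$ and arithmetically hyperbolic over $\Qbar$, and then invoke Theorem \ref{thm:hyp_emb1}. Step (ii) and the final step are fine; the spreading-out bookkeeping from rings of $S$-integers to arbitrary $\ZZ$-finitely generated subrings of $\Qbar$ is standard, and the arithmetic input is exactly the one the paper uses (Theorem \ref{thm:Levin_divs}(1), citing \cite{LevinAnnals} with the improvement of \cite{Autissier1}).

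However, step (i) has a genuine gap. Green's criterion (Theorem \ref{thm:green}) does not take the numerical hypotheses (ampleness, $r\geq 2m\dim X$, the condition that at most $m$ of the $D_i$ meet at a point) as input: its hypotheses are that $X\setminus D$ itself \emph{and} every boundary stratum $\bigcap_{i\in I}D_i\setminus\bigcup_{j\in J}D_j$ are Brody hyperbolic. You never verify these, and they are not formal consequences of the numerical conditions --- the Brody hyperbolicity of $X\setminus D$ under exactly these hypotheses is the deep analytic counterpart of the Diophantine finiteness theorem, proved in \cite{LevinAnnals} via Nevanlinna-theoretic analogues of the Subspace Theorem. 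Those same analytic theorems (Theorems 6.1B and 6.2B of \cite{LevinAnnals}) in fact directly assert that $X\setminus D$ is hyperbolically embedded in $X$, which is what the paper cites as Theorem \ref{thm:Levin_divs}(2); there is no need to route through Green's criterion, and routing through it without supplying the Brody hyperbolicity of the complement and of all the strata leaves the main analytic content of step (i) unproved. Replacing your appeal to Green's criterion by a citation of the analytic results of \cite{LevinAnnals} (again with \cite{Autissier1} for the case $r\geq 5$, $m=\dim X=2$) repairs the argument and recovers the paper's proof.
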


More concretely, Theorem \ref{thm:III} says that, for every $\ZZ$-finitely generated integral domain $A$ of characteristic zero and any model $\mathcal{X}\setminus \mathcal{D}$ for $X\setminus D$ over $A$, the set  $(\mathcal{X}\setminus \mathcal{D})(A)$ of $A$-integral points on  this  model is finite. Assuming $\dim A=1$, this finiteness is proven in \cite{LevinAnnals} by the second-named author, building on seminal work of Corvaja-Zannier \cite{CZ2, CZ3, CZ4}. The proof in \cite{LevinAnnals} relies crucially on Schmidt's Subspace Theorem (as does Corvaja-Zannier's work). Our proof of Theorem \ref{thm:III}  combines the results of \cite{LevinAnnals} (over number fields) with the above ``complex-analytic'' results. It seems however interesting to  note that one could also instead appeal to the function field version of the Subspace Theorem \cite{AnWang, RuWang, Wang}, and ``re-do'' some of the arguments in \cite{LevinAnnals} to obtain Theorem \ref{thm:III}. This line of reasoning is pursued in \cite{Licht}.
 
Using results of Noguchi-Winkelmann \cite[Th.~7.3.4, Th.~9.7.6]{NWBook} (see also \cite{NW}), Theorem~\ref{thm:III} can be improved when the involved divisors generate a subgroup of small rank in the N\'eron-Severi group $\NS(X)$. 
 
\begin{theorem}[Main Result, IV]\label{thm:NW}
Let $X$ be a smooth projective connected variety over $\Qbar$, and let $D=\sum_{i=1}^rD_i$ be a sum of $r$ ample effective divisors on $X$ in general position.  Let $\rank \{D_i\}_{i=1}^r$ denote the (free) rank of the subgroup of $\NS(X)$ generated by the images of $D_1\ldots, D_r$. Suppose that $r\geq 2\dim X+\rank \{D_i\}_{i=1}^r$.  Then the affine variety  $X\setminus D$ is absolutely arithmetically hyperbolic.
\end{theorem}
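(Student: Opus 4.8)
The plan is to verify the two hypotheses of Theorem~\ref{thm:hyp_emb1} for the affine variety $X\setminus D$ (note that $D=\sum_{i=1}^r D_i$ is ample, so $X\setminus D$ is indeed affine): namely, that $X\setminus D$ is hyperbolically embedded in $X$, and that $X\setminus D$ is arithmetically hyperbolic over $\Qbar$. Once both are established, Theorem~\ref{thm:hyp_emb1}, applied to the smooth projective variety $X$ and the divisor $D$, yields that $X\setminus D$ is absolutely arithmetically hyperbolic. Throughout, fix a number field $K_0$ over which $X$ and $D$ are defined, together with an embedding $K_0\hookrightarrow\CC$; ampleness, general position of the $D_i$, and the free rank $\rank\{D_i\}_{i=1}^r$ of the subgroup of $\NS(X)$ they generate are all preserved under base change, so $X_{\CC}$ and $D_{\CC}$ inherit the hypotheses of the theorem.

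For hyperbolic embeddability, I would invoke the criterion of Noguchi--Winkelmann \cite[Th.~7.3.4]{NWBook}: for a sum of $r$ ample effective divisors in general position on a smooth projective complex variety, the complement is hyperbolically embedded in the ambient variety provided $r\geq 2\dim X+\rank\{D_i\}_{i=1}^r$. Applying this to $X_{\CC}\setminus D_{\CC}$ shows that $X_{\CC}\setminus D_{\CC}$ is hyperbolically embedded in $X_{\CC}$; unwinding the definition of ``(weakly-)hyperbolically embedded'' recalled in the introduction --- taking $k_0=K_0$ and the evident models over $K_0$ --- this says precisely that $X\setminus D$ is hyperbolically embedded in $X$ over $\Qbar$.

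For arithmetic hyperbolicity over $\Qbar$, I would fix a finite set $S_0$ of finite places of $K_0$ and a model $\mathcal{X}\setminus\mathcal{D}$ of $X\setminus D$ over $\OO_{K_0,S_0}$. Any $\ZZ$-finitely generated subring $A'\subset\Qbar$ containing $\OO_{K_0,S_0}$ has as fraction field a number field $L$, and satisfies $A'\subseteq\OO_{L,T}$ for some finite set $T$ of finite places of $L$; hence $(\mathcal{X}\setminus\mathcal{D})(A')$ injects into $(\mathcal{X}\setminus\mathcal{D})(\OO_{L,T})$. The latter is finite by the finiteness theorem of Noguchi--Winkelmann for integral points \cite[Th.~9.7.6]{NWBook}, applied to $X_L\setminus D_L$: indeed $D_L$ is again a sum of $r\geq 2\dim X+\rank\{D_i\}_{i=1}^r$ ample effective divisors in general position. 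This gives arithmetic hyperbolicity of $X\setminus D$ over $\Qbar$, and Theorem~\ref{thm:hyp_emb1} then upgrades it to absolute arithmetic hyperbolicity.

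The only delicate point is matching the numerical hypotheses: one must check that in both \cite[Th.~7.3.4]{NWBook} and \cite[Th.~9.7.6]{NWBook} the notions of ``general position'' and of the $\NS$-rank enter exactly as in the bound $r\geq 2\dim X+\rank\{D_i\}$, and that the reductions used above --- from an arbitrary $\ZZ$-finitely generated subring of $\Qbar$ to a ring of $S$-integers, and the independence of finiteness from the chosen integral model --- are carried out as in the introduction; the remainder is a formal application of Theorem~\ref{thm:hyp_emb1}.
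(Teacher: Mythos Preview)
Your proposal is correct and follows essentially the same approach as the paper: the paper states that the proof of Theorem~\ref{thm:NW} ``follows in the same manner'' as its proof for Theorem~\ref{thm:III}, quoting the Noguchi--Winkelmann theorem \cite[Th.~7.3.4, Th.~9.7.6]{NWBook} to obtain both hyperbolic embeddability over $\CC$ and arithmetic hyperbolicity over $\Qbar$, and then invoking Theorem~\ref{thm:hyp_emb1}. Your write-up is in fact somewhat more explicit than the paper's, spelling out the reduction from arbitrary $\ZZ$-finitely generated subrings of $\Qbar$ to rings of $S$-integers and the base-change step for the complex-analytic input.
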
 
  
Lang conjectured that different notions of ``hyperbolicity'' for projective algebraic varieties should be equivalent. As discussed earlier, for affine varieties it is not so clear whether one should expect all notions of hyperbolicity to be equivalent. 
 In \cite{Levin} the second-named author studied the case of symmetric powers of smooth affine curves and obtained the following version of Lang-Vojta's conjecture; see \cite{AbrHar} for related results.

 \begin{theorem}[\cite{Levin}]\label{thm:LevinCompositio} Let $X$ be a smooth affine connected curve over $\Qbar$, and let $d\geq 1$ be an integer. Then  the following three statements are equivalent.
 \begin{enumerate}
 \item The smooth affine variety $\Sym^d_{X}$ is arithmetically  hyperbolic over $\Qbar$;
 \item  For every embedding $\Qbar \to \CC$, the complex-algebraic variety  $\Sym^d_{X_\CC}$ is Brody hyperbolic.
 \item For every dense open $U\subset X$,  every finite morphism  $U\to \mathbb{G}_{m,\Qbar}$  is of degree at least $d+1$.
 \end{enumerate}
 \end{theorem}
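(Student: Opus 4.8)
The plan is to establish the four implications $(1)\Rightarrow(3)$, $(2)\Rightarrow(3)$, $(3)\Rightarrow(2)$ and $(3)\Rightarrow(1)$, which together give the three-way equivalence. Throughout, fix a smooth projective model $\overline X$ of $X$ and write $X=\overline X\setminus\{p_1,\dots,p_n\}$ with $n\geq 1$, so that $\Sym^d_X=\Sym^d_{\overline X}\setminus D$ with $D=\sum_{i=1}^n D_i$ and $D_i=p_i+\Sym^{d-1}_{\overline X}$. The first two implications are the elementary ones and I would treat them by contraposition via a single construction; the last two are the substantive ones, one arithmetic and one complex-analytic.

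So suppose $U\subseteq X$ is a dense open subscheme and $f\colon U\to\Gm$ is a finite morphism of degree $e\leq d$. A finite morphism of affine curves is proper, so $f$ extends to a finite morphism $\overline f\colon\overline X\to\PP^1$; as $U$ is open, closed, and non-empty in the connected curve $\overline f^{-1}(\Gm)$, we get $U=\overline f^{-1}(\Gm)$ and hence $\{p_1,\dots,p_n\}\subseteq\overline f^{-1}(\{0,\infty\})$. Now $u\mapsto\overline f^{*}(u)$ is a non-constant morphism $\psi\colon\PP^1\to\Sym^e_{\overline X}$ whose restriction to $\Gm$ lands in $\Sym^e_X$, because the fibres of $\overline f$ over $\Gm$ are supported in $\overline f^{-1}(\Gm)=U\subseteq X$; composing with the closed immersion $\Sym^e_X\hookrightarrow\Sym^d_X$ given by adding a fixed divisor $(d-e)\cdot q$ with $q\in X$ yields a non-constant morphism $\Gm\to\Sym^d_X$. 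Precomposing with $z\mapsto e^{z}\colon\CC\to\Gm(\CC)$ produces a non-constant entire curve in $\Sym^d_{X_\CC}$, contradicting $(2)$; spreading everything out over a ring of $S$-integers $\mathcal O_{K,S}$ and evaluating $\psi$ at the infinitely many units $u\in\mathcal O_{K,S'}^{*}$ produces (after enlarging $S'$) infinitely many distinct $\mathcal O_{K,S'}$-integral points on $\Sym^d_X$ — the $0$-cycles $\overline f^{*}(u)$ are disjoint from the $p_i$ modulo every prime outside $S'$ precisely because $u$ is a unit — contradicting arithmetic hyperbolicity over $\Qbar$. This gives $(2)\Rightarrow(3)$ and $(1)\Rightarrow(3)$. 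For $(3)\Rightarrow(1)$ I would simply quote the Diophantine argument of \cite{Levin}: under $(3)$, finiteness of $\mathcal O_{K,S}$-integral points on any model of $\Sym^d_X$, for all number fields $K$ and finite sets of places $S$, follows from Schmidt's Subspace Theorem in the style of Corvaja--Zannier. This is the hard arithmetic input, and Theorem~\ref{thm:hyp_emb1} does not help here, as it upgrades arithmetic hyperbolicity rather than establishing it.

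The main work is in $(3)\Rightarrow(2)$, where I would use the generalized Jacobian. Let $J=J_{\mathfrak m}$ be the generalized Jacobian of $\overline X$ with modulus $\mathfrak m=p_1+\cdots+p_n$, a semi-abelian variety $1\to\Gm^{n-1}\to J\to\Jac(\overline X)\to 1$, and let $\alpha\colon\Sym^d_X\to J^{(d)}$ be the Abel--Jacobi morphism to the associated $J$-torsor $J^{(d)}$. Given a non-constant entire curve $\gamma\colon\CC\to\Sym^d_{X_\CC}$, I would analyze $\alpha\circ\gamma$ by the logarithmic Bloch--Ochiai theorem (equivalently, Green's criterion, Theorem~\ref{thm:green}). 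If $\alpha\circ\gamma$ is constant, then $\gamma$ lies in a positive-dimensional fibre of $\alpha$, i.e.\ in a linear system $|E|$ of degree $\leq d$ on $\overline X$ whose members are supported in $X$ and linearly equivalent modulo $\mathfrak m$; the ``modulo $\mathfrak m$'' constraint is linear and cuts out a positive-dimensional subsystem in which all of $p_1,\dots,p_n$ lie in a single fibre of the induced map $\overline X\to\PP^1$, so after an automorphism of $\PP^1$ one gets a rational function on $\overline X$ of degree $\leq d$ with every $p_i$ a pole, hence a finite morphism $U\to\Gm$ of degree $\leq d$ with $U=\overline X\setminus(\text{zeros and poles})\subseteq X$, contradicting $(3)$. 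If $\alpha\circ\gamma$ is non-constant, then $\overline{(\alpha\circ\gamma)(\CC)}$ is a translate $w+B$ of a positive-dimensional sub-semi-abelian variety $B\subseteq J$, and one must again convert this into a low-degree morphism $U\to\Gm$, $U\subseteq X$ dense open, contradicting $(3)$.

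I expect this last step to be the main obstacle: turning a positive-dimensional translated sub-semi-abelian variety in the image of $\Sym^d_X$ into an honest finite morphism to $\Gm$ of degree $\leq d$. This is essentially the ``partial converse to Green's criterion'' alluded to in the introduction; it requires tracking how the boundary divisor $D$ interacts with the semi-abelian structure — the torus part of $B$ encodes multiplicative relations among the values of functions at the cusps, and the abelian part corresponds to a map from $\overline X$ onto a curve of lower genus — and in particular one has to exclude the purely abelian case for $B$: if $\alpha\circ\gamma$ had Zariski-dense image in a translate of an abelian subvariety, that image would meet every fibre of $\alpha$ lying over the classes of the cusps, which is incompatible with $\gamma(\CC)\subseteq\Sym^d_X$.
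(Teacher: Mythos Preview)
The paper does not prove Theorem~\ref{thm:LevinCompositio}; it is quoted as a result from \cite{Levin} and used as a black box in the proofs of Theorems~\ref{thm:lv} and~\ref{thm4}. The only related argument actually given in the paper is Proposition~\ref{prop:grouplessness}, which shows that $(3)$ is equivalent to grouplessness of $\Sym^d_X$. Together with the standard fact that arithmetically (resp.\ Brody) hyperbolic varieties are groupless, this yields $(1)\Rightarrow(3)$ and $(2)\Rightarrow(3)$. Your contrapositive construction for these two implications is correct and is essentially one direction of that proposition; note, though, that the paper's proof of Proposition~\ref{prop:grouplessness} works with the \emph{ordinary} Abel--Jacobi map to $\Jac(\overline X)$ and reduces to an elementary statement about lines in complements of hyperplanes, rather than the generalized Jacobian. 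For $(3)\Rightarrow(1)$ you correctly defer to \cite{Levin}, as does the paper.

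Your sketch of $(3)\Rightarrow(2)$, on the other hand, has a genuine gap that you yourself flag. The generalized-Jacobian approach is natural, and your treatment of the case where $\alpha\circ\gamma$ is constant is essentially correct: a positive-dimensional fibre of $\alpha$ does give a pencil in which all $p_i$ lie over a single point of $\PP^1$, and after an automorphism of $\PP^1$ sending that point to $\infty$ one obtains the required degree-$\leq d$ morphism $U\to\Gm$ with $U\subset X$. But the non-constant case is not handled. Knowing that the Zariski closure of $(\alpha\circ\gamma)(\CC)$ is a translate $w+B$ of a sub-semi-abelian variety does not by itself yield a low-degree map to $\Gm$: the inclusion $w+B\subset\overline{\alpha(\Sym^d_X)}$ can occur with $B$ purely abelian (your one-line dismissal of this case is not an argument---an entire curve in $\Sym^d_X$ whose image in $J$ is dense in an abelian translate need not force any cusp to lie in the closure of the image), and even when $B$ has a nontrivial torus part you must still manufacture a specific rational function on $\overline X$ of degree $\leq d$ with the $p_i$ over at most two points. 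Your reference to the ``partial converse to Green's criterion'' is also misplaced: Theorem~\ref{thm:greenconverse} concerns hyperbolic embeddability via the boundary strata, which is a different question from Brody hyperbolicity of $\Sym^d_X$ itself. Since the paper gives no proof of $(3)\Rightarrow(2)$ to compare against, I can only say that your outline for this direction is incomplete as written.
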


 We prove the following strengthening of Theorem \ref{thm:LevinCompositio}, under a suitable assumption on the boundary of $X$ in $\overline{X}$; see \cite{ShiffmanZaidenberg} for related results.

  \begin{theorem}[Main Result, V]  \label{thm:lv} Let $d\geq 1$ be an integer, and let $X$ be a smooth affine connected curve over $\Qbar$, with smooth projective model $\overline{X}$, such that $\# \overline{X}\setminus X \geq 2d$. Then the following statements are equivalent.
  \begin{enumerate}
  \item The  smooth   affine variety $\Sym^d_{X}$ is absolutely arithmetically hyperbolic.
 \item  For every embedding $\Qbar \to \CC$, the variety $\Sym^d_{X_{\CC}}$ is   hyperbolically embedded  in $\Sym^d_{\overline{X}_\CC}$.
 \item   For every dense open $U\subset X$, every finite morphism  $U\to \mathbb{G}_{m,\Qbar}$  is of degree at least $d+1$.
 \end{enumerate}
  \end{theorem}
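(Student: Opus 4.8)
The plan is to establish the cycle of implications $(1)\Rightarrow(3)\Rightarrow(2)\Rightarrow(1)$, using Theorem \ref{thm:LevinCompositio} to bridge the arithmetic and morphism-theoretic statements and using Theorem \ref{thm:hyp_emb1} (together with Green's criterion) to pass from hyperbolic embeddability to absolute arithmetic hyperbolicity. For $(1)\Rightarrow(3)$, I would observe that absolute arithmetic hyperbolicity of $\Sym^d_X$ implies in particular that $\Sym^d_X$ is arithmetically hyperbolic over $\Qbar$, so the implication $(1)\Rightarrow(3)$ of Theorem \ref{thm:lv} is immediate from the implication $(1)\Rightarrow(3)$ of Theorem \ref{thm:LevinCompositio}. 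Conversely, $(2)\Rightarrow(1)$ follows from Theorem \ref{thm:hyp_emb1}: statement $(2)$ says precisely that $\Sym^d_{X_\CC}$ is hyperbolically embedded in the smooth projective variety $\Sym^d_{\overline{X}_\CC}$ (after checking that $\Sym^d_{\overline X}$ is smooth, which holds since $\overline X$ is a smooth curve and the symmetric power of a smooth curve is smooth, and that the boundary $\Sym^d_{\overline X}\setminus \Sym^d_X$ is a divisor), hence $\Sym^d_X$ is hyperbolically embedded in a smooth projective model over $\Qbar$ in the sense defined in the excerpt; combined with the arithmetic hyperbolicity over $\Qbar$ furnished by Theorem \ref{thm:LevinCompositio} ($(3)\Rightarrow(1)$ of that theorem gives us arithmetic hyperbolicity over $\Qbar$ once we know $(2)$ implies $(3)$, or we can get it directly from $(2)$ since hyperbolic embeddability implies Brody hyperbolicity and then invoke Theorem \ref{thm:LevinCompositio}), Theorem \ref{thm:hyp_emb1} upgrades this to absolute arithmetic hyperbolicity.

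The heart of the matter is the implication $(3)\Rightarrow(2)$, and this is where the hypothesis $\#(\overline X\setminus X)\geq 2d$ enters. Here I would invoke Green's criterion (Theorem \ref{thm:green}) for hyperbolic embeddability. The strategy is to analyze entire curves $f\colon \CC\to \Sym^d_{\overline X_\CC}$ and sequences of holomorphic maps degenerating toward the boundary, and to show that condition $(3)$ — that every finite map from a dense open of $X$ to $\Gm$ has degree $\geq d+1$ — rules out the existence of the relevant entire curves not just in $\Sym^d_{X_\CC}$ (which would give Brody hyperbolicity) but in a way compatible with the boundary behavior required for a hyperbolic embedding. Concretely, a holomorphic map $\CC\to \Sym^d_{\overline X_\CC}$ corresponds to an effective divisor of degree $d$ varying holomorphically, equivalently (after passing to the incidence correspondence or to a $d$-fold cover) a $d$-tuple of holomorphic maps to $\overline X_\CC$; if such a map or a limit of such maps lands in a boundary stratum, one extracts a non-constant holomorphic map to $\overline X_\CC$, or more precisely to a component where some of the $d$ points have collided or run off to $\overline X\setminus X$, and the numerical condition $\#(\overline X\setminus X)\geq 2d$ together with $(3)$ forces such a map to be constant by a Riemann–Hurwitz / Picard-type argument — a non-constant map from $\CC$ would factor through $\Gm$ (since $X$, having at least $2d\geq 2$ boundary points, has a finite map to $\Gm$ or at any rate $X$ embeds in $\Gm$ after removing points) and produce a contradiction with the degree bound. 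I expect the main obstacle to be the careful bookkeeping of the boundary stratification of $\Sym^d_{\overline X_\CC}$ and verifying Green's criterion uniformly across all strata, including the diagonal strata where points collide; one must rule out entire curves in each stratum and also rule out the "spiraling toward the boundary without converging" phenomenon that Green's criterion is designed to detect.

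For the degeneration analysis I would argue as follows: by Green's criterion, $\Sym^d_{X_\CC}$ fails to be hyperbolically embedded in $\Sym^d_{\overline X_\CC}$ if and only if there is a non-constant holomorphic map $g\colon \CC\to \Sym^d_{\overline X_\CC}$ whose image lies in $\Sym^d_{\overline X_\CC}\setminus \Sym^d_{X_\CC}$ union (the closure of) certain limit sets, i.e.\ roughly speaking the complement of hyperbolic embeddability is witnessed by an entire curve touching the boundary locus $\Delta$ where $\Delta = \Sym^d_{\overline X_\CC}\setminus \Sym^d_{X_\CC}$. Writing such a $g$ via its associated $d$-valued map to $\overline X_\CC$, at least one branch $h\colon \CC \to \overline X_\CC$ is non-constant and, by the structure of the boundary, either $h$ hits $\overline X\setminus X$ at some point — impossible since then $h$ is a non-constant holomorphic map from $\CC$ omitting the other $\geq 2d-1\geq 1$ boundary points of a curve, hence omitting $\geq 2$ points of $\overline X_\CC$ if $\overline X$ has genus $0$, contradicting Picard, or lands in $X_\CC$ and then $g$ actually maps into $\Sym^d_{X_\CC}$; in the remaining case $h$ maps into $X_\CC$ for all branches but the branches collide, and one uses $(3)$: a non-constant holomorphic map $\CC\to X_\CC$ together with the degree condition (via the implication $(3)\Leftrightarrow(2)$ of Theorem \ref{thm:LevinCompositio}, which already gives Brody hyperbolicity of $\Sym^d_{X_\CC}$) yields the needed contradiction. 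Thus the only genuinely new content beyond Theorem \ref{thm:LevinCompositio} is controlling the \emph{boundary}, and it is precisely the inequality $\#(\overline X\setminus X)\geq 2d$ that makes the boundary behavior tractable, since it guarantees enough omitted points that no branch of a limiting entire curve can approach $\Delta$ in the pathological way forbidden by Green's criterion.
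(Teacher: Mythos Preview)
Your overall strategy --- the cycle $(1)\Rightarrow(3)\Rightarrow(2)\Rightarrow(1)$ via Theorem~\ref{thm:LevinCompositio}, Green's criterion, and Theorem~\ref{thm:hyp_emb1} --- is exactly the paper's, and your handling of $(1)\Rightarrow(3)$ and $(2)\Rightarrow(1)$ is fine.

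The gap is in your execution of $(3)\Rightarrow(2)$. You correctly aim at Green's criterion, but the ``branch'' analysis you sketch does not work. A holomorphic map $g\colon\CC\to\Sym^{d'}_{\overline X_\CC}$ does not in general admit global single-valued branches $h_i\colon\CC\to\overline X_\CC$: the incidence divisor in $\CC\times\overline X_\CC$ is a branched cover of $\CC$ whose components need not be copies of $\CC$. Your argument, if valid, would show that $\Sym^{d'}_Y$ is Brody hyperbolic whenever $Y$ is, which is false (e.g.\ $Y=\mathbb{P}^1\setminus\{0,1,\infty\}$, $d'=2$, where $\Sym^2_Y\cong\Gm^2$). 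Your case split (``$h$ hits $\overline X\setminus X$'' versus ``branches collide'') also misidentifies the boundary: the diagonal lies inside $\Sym^d_X$, while the boundary strata record which points of $\overline X\setminus X$ occur in the divisor.

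What is missing is the explicit identification of the Green strata and a second application of Theorem~\ref{thm:LevinCompositio} to each of them. Writing $\overline X\setminus X=\{P_1,\dots,P_r\}$ and $D_j$ for the locus of degree-$d$ effective divisors containing $P_j$, one has $\bigcap_{i\in I}D_i\setminus\bigcup_{j\in J}D_j\cong\Sym^{d-t}_{X\cup\{P_i:i\in I\}}$ for $|I|=t<d$ (this is the paper's Theorem~\ref{symhyp}). For $t\geq 1$ the curve $X\cup\{P_i:i\in I\}$ omits at least $2d-t>2(d-t)$ points of $\overline X$, so no map of degree $\leq d-t$ to $\mathbb{P}^1$ can carry all omitted points into two fibers; hence by Theorem~\ref{thm:LevinCompositio} the stratum is Brody hyperbolic. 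For $t=0$ this is exactly condition~(3). The elementary inequality $2d-t>2(d-t)$ for $t\geq 1$ is precisely where the hypothesis $\#(\overline X\setminus X)\geq 2d$ enters, and this is the step your proposal does not supply.
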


If $\# \overline{X}\setminus X\geq 2d+1$, then it is easy to see that (3) in Theorem \ref{thm:lv} always holds.  In particular, we find (see Section \ref{sec:hypemb} for the validity over $\CC$):

\begin{corollary}\label{cor:symprods_are_hypemb}
Let $X$ be a smooth affine connected curve over $\mathbb{C}$ and let $\Xbar$ be its smooth projective compactification.  If $\#\Xbar\setminus X\geq 2d+1$ then $\Sym^d_X$ is hyperbolically embedded in $\Sym^d_{\Xbar}$.
\end{corollary}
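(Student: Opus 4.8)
The plan is to deduce the corollary from the implication ``condition~$(3)$ of Theorem~\ref{thm:lv} implies hyperbolic embeddability'', whose validity over $\CC$ is established in Section~\ref{sec:hypemb}, together with the elementary fact that condition~$(3)$ is automatic once $\#\Xbar\setminus X\geq 2d+1$. Granting the former, all that must be checked is that every finite morphism from a dense open $U\subseteq X$ to $\Gm$ has degree at least $d+1$.

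To check this, let $\varphi\colon U\to\Gm$ be a finite morphism of degree $e$. Since $\Xbar$ is also the smooth projective model of $U$, the morphism $\varphi$ extends to a finite morphism $\overline\varphi\colon\Xbar\to\PP^1$ of degree $e$, and the finiteness of $\varphi$ forces $\overline\varphi^{-1}(\{0,\infty\})=\Xbar\setminus U$; this set contains $\Xbar\setminus X$, hence has cardinality at least $2d+1$. On the other hand the effective divisor $\overline\varphi^{\,*}(0)+\overline\varphi^{\,*}(\infty)$ has degree $2e$ and is supported precisely on $\overline\varphi^{-1}(\{0,\infty\})$, so $2e\geq 2d+1$ and therefore $e\geq d+1$. (This is the observation recorded in the remark preceding the corollary.) Feeding this into the $\CC$-version of Theorem~\ref{thm:lv} gives that $\Sym^d_X$ is hyperbolically embedded in $\Sym^d_{\Xbar}$.

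It remains to indicate where the substance of the argument lies, namely in the complex-analytic statement invoked above, which one proves by applying Green's criterion (Theorem~\ref{thm:green}) to $Z:=\Sym^d_{\Xbar}$ --- a smooth projective variety of dimension $d$ --- and its boundary divisors. Writing $\Xbar\setminus X=\{p_1,\dots,p_q\}$ with $q\geq 2d+1$, one has $Z\setminus\Sym^d_X=B_1\cup\cdots\cup B_q$, where $B_i=\{E\in Z:p_i\in\Supp E\}\cong\Sym^{d-1}_{\Xbar}$; one verifies that these divisors are smooth, form a normal crossings divisor, and lie in general position, the key point being that any $d+1$ of them have empty intersection, since no effective divisor of degree $d$ on $\Xbar$ can contain $d+1$ distinct points. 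For $I\subseteq\{1,\dots,q\}$ with $|I|\leq d$, the locally closed stratum $\bigl(\bigcap_{i\in I}B_i\bigr)\setminus\bigcup_{j\notin I}B_j$ is isomorphic to $\Sym^{d-|I|}$ of the affine curve $\Xbar\setminus\{p_j:j\notin I\}$, whose number of boundary points $q-|I|$ satisfies $q-|I|\geq 2(d-|I|)+1$; Green's criterion therefore reduces hyperbolic embeddability to the Brody hyperbolicity of each such stratum. That last statement --- that $\Sym^e$ of an affine curve whose smooth compactification is missing at least $2e+1$ points carries no non-constant entire curve --- is the genuine obstacle, and it is exactly here that the hypothesis $\#\Xbar\setminus X\geq 2d+1$ (rather than the $\geq 2d$ of Theorem~\ref{thm:lv}) is used: it is what forces \emph{every} boundary stratum, the open stratum $\Sym^d_X$ included, to see at least $2e+1$ punctures, so that the argument closes purely complex-analytically and one avoids passing through the arithmetic characterization of Theorem~\ref{thm:LevinCompositio}.
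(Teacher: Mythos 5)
Your overall architecture is the same as the paper's: the corollary is deduced from the Green-criterion analysis of the boundary stratification of $\Sym^d_{\Xbar}$ (the paper's Theorem~\ref{symhyp} and Corollary~\ref{corsym}) together with the elementary count that a degree-$e$ morphism $\Xbar\to\PP^1$ sends at most $2e$ points into $\{0,\infty\}$. Your identification of the strata $\bigl(\bigcap_{i\in I}B_i\bigr)\setminus\bigcup_{j\notin I}B_j\cong\Sym^{d-|I|}_{\Xbar\setminus\{p_j\,:\,j\notin I\}}$, the inequality $q-|I|\geq 2(d-|I|)+1$, and the verification that every finite morphism $U\to\Gm$ has degree at least $d+1$ are all correct.

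There is, however, a genuine gap at exactly the point you yourself label ``the genuine obstacle.'' Green's criterion reduces the claim to the Brody hyperbolicity of every stratum, \emph{including the open stratum $\Sym^d_X$ itself} (condition (1) of Theorem~\ref{thm:green}), and you then assert that the hypothesis of at least $2e+1$ punctures on each stratum lets ``the argument close purely complex-analytically'' --- but no argument is given, and none is available from the numerical count alone. The statement that $\Sym^e$ of an affine curve whose compactification is missing at least $2e+1$ points (more generally, one admitting no finite map of degree at most $e$ to $\Gm$ from a dense open) is Brody hyperbolic is precisely the main analytic theorem of \cite{Levin}, which the paper invokes in the proof of Corollary~\ref{corsym}; already for $\Xbar=\PP^1$ it contains the Bloch--Cartan--Dufresnoy theorem on complements of $2d+1$ hyperplanes in general position. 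Induction on $d$ does not rescue this, since the lower-dimensional strata say nothing about the Brody hyperbolicity of $\Sym^d_X$ itself. Your first two paragraphs have the same issue in a different guise: the implication $(3)\Rightarrow(2)$ of Theorem~\ref{thm:lv} is proved in the paper only over $\Qbar$ and itself routes through \cite{Levin} and Corollary~\ref{cor:hypemb}; the ``validity over $\CC$'' you appeal to is exactly Corollary~\ref{corsym}, whose proof rests on the same analytic input you leave uncited. The proof is repaired simply by invoking the analytic main result of \cite{Levin} for the Brody hyperbolicity of each stratum, but as written that key input is missing.
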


The corollary may be viewed as a generalization of a well-known result, going back to Bloch, Cartan, and Dufresnoy (see also work of Green \cite{GreenHyp} and Fujimoto \cite{Fuji})  which asserts that the complement of $2n+1$ hyperplanes in general position in $\mathbb{P}^n$ is hyperbolically embedded in $\mathbb{P}^n$.  Indeed, if $X\subset \Xbar=\mathbb{P}^1$ and $r=\# \Xbar\setminus X$, then $\Sym^d_X$ is isomorphic to the complement of $r$ hyperplanes in general position in $\Sym^d_{\mathbb{P}^1} \cong \mathbb{P}^d$.  Thus, Corollary \ref{cor:symprods_are_hypemb} is a generalization of this result to symmetric powers of curves.

Quite interestingly,  the condition on the boundary in Theorem \ref{thm:lv} can be removed for the second symmetric self-product (see Corollary \ref{sym2}). Thus, we obtain the following strong version of Lang-Vojta's conjecture for the affine surface $\Sym^2_X$.

     \begin{theorem}[Main Result, VI]\label{thm4}  Let $ X$ be a smooth affine connected curve over $\Qbar$ with smooth projective model $\overline{X}$. Then the following statements are equivalent.
  \begin{enumerate}
  \item The  smooth   affine surface $\Sym^2_{X}$ is absolutely arithmetically hyperbolic.
 \item  For every embedding $\Qbar \to \CC$, the variety $\Sym^2_{X_{\CC}}$ is   hyperbolically embedded  in $\Sym^2_{\overline{X}_\CC}$.
 \item  For every dense open $U\subset X$, every finite morphism  $U\to \mathbb{G}_{m,\Qbar}$  is of degree at least $3$.
 \end{enumerate}
  \end{theorem}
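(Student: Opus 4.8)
The plan is to establish the cycle $(2)\Rightarrow(1)\Rightarrow(3)\Rightarrow(2)$, where the first two implications are formal and the third carries all the content. Write $\{p_1,\dots,p_r\}:=\overline X\setminus X$ (so $r\geq 1$ since $X$ is affine), and for $1\le i\le r$ let $F_i\subseteq\Sym^2_{\overline X}$ be the image of $\{p_i\}\times\overline X$ under $\overline X\times\overline X\to\Sym^2_{\overline X}$; then $\Sym^2_{\overline X}$ is a smooth projective surface, each $F_i$ is a smooth curve isomorphic to $\overline X$ via $q\mapsto\{p_i,q\}$, distinct $F_i$ and $F_j$ meet transversally in the single point $\{p_i,p_j\}$ with no triple intersections, and $\Sym^2_{\overline X}\setminus\bigcup_iF_i=\Sym^2_X$. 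For $(2)\Rightarrow(1)$: a hyperbolically embedded variety is Brody hyperbolic, so $(2)$ and the implication $(2)\Rightarrow(1)$ of Theorem \ref{thm:LevinCompositio} give that $\Sym^2_X$ is arithmetically hyperbolic over $\Qbar$; since $\Sym^2_X$ is the complement of the divisor $\sum_iF_i$ in the smooth projective variety $\Sym^2_{\overline X}$ and is hyperbolically embedded there, Theorem \ref{thm:hyp_emb1} upgrades this to absolute arithmetic hyperbolicity. For $(1)\Rightarrow(3)$: absolute arithmetic hyperbolicity gives in particular arithmetic hyperbolicity over $\Qbar$, whence $(3)$ by the implication $(1)\Rightarrow(3)$ of Theorem \ref{thm:LevinCompositio}. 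Neither argument uses any hypothesis on $\#(\overline X\setminus X)$; they coincide with the corresponding parts of Theorem \ref{thm:lv}.

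It remains to prove $(3)\Rightarrow(2)$ without the hypothesis $\#(\overline X\setminus X)\geq 4$ of Theorem \ref{thm:lv} (with $d=2$). If $r\geq 4$ this is Theorem \ref{thm:lv}, so I would assume $r\le 3$ and fix an embedding $\Qbar\to\CC$. The idea is to apply Green's criterion (Theorem \ref{thm:green}) to the simple normal crossings divisor $D=\sum_{i=1}^rF_i$ on the smooth projective surface $\Sym^2_{\overline X_\CC}$: it reduces the hyperbolic embeddedness of $\Sym^2_{X_\CC}$ in $\Sym^2_{\overline X_\CC}$ to showing that every holomorphic map $\CC\to\Sym^2_{\overline X_\CC}$ whose image lies in one of the locally closed strata cut out by $D$ is constant. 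The strata are: the open stratum $\Sym^2_{X_\CC}$, which is Brody hyperbolic by the implication $(3)\Rightarrow(2)$ of Theorem \ref{thm:LevinCompositio}; the one-dimensional strata, each an open subset of some $F_i\cong\overline X_\CC$; and the zero-dimensional strata, each the single point $\{p_i,p_j\}$. The zero-dimensional strata are harmless, so the whole implication reduces to the claim that $(3)$ forces $g(\overline X)\geq 2$: given this, $\overline X_\CC$ and hence each $F_i$ admits no non-constant entire curve, Green's criterion applies, and $(2)$ follows for the chosen embedding — hence for all embeddings.

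I expect the claim that $(3)$ forces $g(\overline X)\geq 2$ when $r\le 3$ to be the main obstacle: it is the point where the $\Sym^2$-theoretic hypothesis $(3)$ must be seen to control the single curve $\overline X$, and it genuinely uses $r\le 3$ (for $r\geq 5$ even $\mathbb P^1$ satisfies $(3)$). The proof I have in mind is short: if $g(\overline X)\le 1$ then $\overline X$ carries a non-constant rational function $f$ of degree $\le 2$ whose divisor can be chosen to have support containing $\{p_1,\dots,p_r\}$ — on $\mathbb P^1$ one writes $f$ explicitly as a ratio of polynomials of degree $\le 2$ through the prescribed points, and on an elliptic curve one builds $f$ from Abel's theorem (for example the degree-$2$ principal divisor $(p_1)+(p_2)-(p_3)-(p_1\oplus p_2\ominus p_3)$ when $r=3$, with obvious variants for $r\le 2$) — and then the restriction of $f$ to the dense open $U:=\overline X\setminus\Supp(\operatorname{div}f)\subseteq X$ is a finite morphism $U\to\mathbb G_{m,\Qbar}$ of degree $\le 2$, contradicting $(3)$. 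Combining this with the reduction above closes the cycle.
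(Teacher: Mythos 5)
Your proposal is correct and takes essentially the same route as the paper: both reduce $(3)\Rightarrow(2)$ to Green's criterion applied to the normal crossings divisor $\sum_i F_i$ on $\Sym^2_{\overline{X}}$, using Levin's theorem for the open stratum and a genus case analysis for the one-dimensional strata, and then invoke Theorem \ref{thm:hyp_emb1} for absoluteness. The only (harmless) difference is in the stratum analysis: the paper isolates it as the purely analytic Corollary \ref{sym2} (Brody hyperbolicity of $\Sym^2_X$ alone already forces each $X\cup\{P\}$ to be Brody hyperbolic, which also permits $g=1$ with at least two punctures), whereas you derive $g(\overline{X})\geq 2$ directly from $(3)$ when $r\leq 3$ by constructing degree-$\leq 2$ maps to $\mathbb{G}_m$ in genus $0$ and $1$.
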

 
The next result shows that the second and third statements of Theorem \ref{thm:lv} may be inequivalent when $d\geq 3$ and the numerical condition is not satisfied.

\begin{theorem}[Main Result, VII]\label{thm:finale}
For every $d\geq 3$, there exists a  smooth affine connected curve $X$ over $\Qbar$, with smooth projective model $\overline{X}$, such that $\#\Xbar\setminus X=2d-1$ and the following statements hold.
\begin{enumerate}
\item The smooth affine variety $\Sym^d_X$ is arithmetically hyperbolic over $\Qbar$.
\item For every embedding $\Qbar \to \CC$, the smooth affine variety $\Sym^d_{X_\CC}$ is Brody hyperbolic, but \textbf{not} hyperbolically embedded in  $\Sym^d_{\Xbar_{\CC}}$.
\item For every dense open $U\subset X$, every finite morphism  $U\to \mathbb{G}_{m,\Qbar}$  is of degree at least $d+1$.
\end{enumerate}
\end{theorem}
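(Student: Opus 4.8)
The plan is to construct, for each $d \geq 3$, a curve $X = \overline{X} \setminus \{p_1, \dots, p_{2d-1}\}$ with $\overline{X}$ of sufficiently high genus so that statement (3) holds automatically, while arranging the boundary points so that $\Sym^d_X$ contains a Brody-hyperbolic but non-hyperbolically-embedded structure. First I would secure statement (3): if $\#\overline{X} \setminus X = 2d-1$ and $\overline{X}$ has genus $g$ large enough (e.g. $g \geq 1$ suffices, but $g$ large makes things safe), then by Riemann--Hurwitz any finite morphism $U \to \mathbb{G}_m$ from a dense open $U \subseteq X$ must have degree at least $d+1$ — the point being that a degree-$e$ map to $\mathbb{G}_m = \mathbb{P}^1 \setminus \{0, \infty\}$ pulls back $\{0,\infty\}$ to a divisor supported on (at most) $2e$ points of $\overline{X}$, all of which must lie outside $U$, hence among the $2d-1$ boundary points plus the points of $X \setminus U$; a short combinatorial/ramification argument, essentially the one already used to prove Theorem \ref{thm:LevinCompositio}(3) $\Leftrightarrow$ the other conditions, forces $e \geq d+1$ when the boundary has exactly $2d-1$ points and one exploits that $\overline{X}$ is not rational. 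By Theorem \ref{thm:LevinCompositio}, (3) then gives that $\Sym^d_{X_\CC}$ is Brody hyperbolic for every embedding $\Qbar \to \CC$, and that $\Sym^d_X$ is arithmetically hyperbolic over $\Qbar$; this yields (1) and the first half of (2).

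The heart of the theorem is the \emph{failure} of hyperbolic embeddedness in (2). Here the plan is to exhibit an explicit boundary sequence along which the Kobayashi pseudodistance does not stay bounded away from zero — i.e., a sequence of points in $\Sym^d_{X_\CC}$ converging to a boundary point of $\Sym^d_{\overline{X}_\CC}$, together with a second such sequence, whose Kobayashi distance tends to $0$ but whose limits in the compactification are distinct. Choosing $2d-1$ boundary points means one of the $d$ "slots" of a symmetric $d$-tuple is forced to approach $X$ rather than the boundary; concretely, a point of $\Sym^d_{\overline{X}} \setminus \Sym^d_X$ near the relevant stratum looks like $(d-1)$ boundary points together with one moving point $q \in X$. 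I would build, using the $2d-1$ available punctures, a one-parameter family of such configurations and connect pairs of them by holomorphic disks of Kobayashi-bounded size that "slide" the free point $q$ while the other $d-1$ coordinates sit at (or limit to) distinct punctures — this is precisely the obstruction that $2d$ boundary points would remove, since with $2d$ points every slot can be pinned to the boundary and Corollary \ref{cor:symprods_are_hypemb}-type arguments apply. The disks are constructed from maps $\mathbb{D} \to X$ realizing small Kobayashi distance between two points of $X$ (possible since $X$ is a fixed curve, not shrinking), combined diagonally with constant/near-constant maps into the punctured neighborhoods.

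The main obstacle I anticipate is making the non-embeddedness argument \emph{rigorous and uniform in $d$}: one must verify that the constructed sequences genuinely have distinct limits in $\Sym^d_{\overline{X}}$ (so that a metric on the compactification would be violated) while keeping the Kobayashi distance between them controlled, and one must rule out that the hyperbolic embeddedness criterion of Green (Theorem \ref{thm:green}) is nevertheless satisfied through some indirect mechanism. I would handle this by invoking the partial converse to Green's criterion alluded to in the introduction (the tool used for Theorem \ref{thm:lv} and Theorem \ref{thm4}): translate "not hyperbolically embedded" into the existence of a non-constant holomorphic map from $\CC$ or from a punctured disk $\mathbb{D}^*$ into a limit configuration whose image meets the boundary, and then exhibit exactly such a degenerate map — e.g. $t \mapsto \big(\gamma(t), a_2, \dots, a_d\big)$ where $a_2, \dots, a_d$ are $d-1$ of the punctures (viewed in $\overline{X}$) and $\gamma: \mathbb{D}^* \to X$ limits to the $d$-th puncture — checking that this map is "limiting" in Green's sense but is not itself a Brody curve in $\Sym^d_{\overline{X}}$, which is consistent with (and forced by) the fact that (3) still holds. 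Getting the bookkeeping of which punctures play which role, and confirming the dichotomy "$2d$ $\Rightarrow$ embedded / $2d-1$ $\Rightarrow$ possibly not" is sharp, is where the real work lies; everything else is an application of the cited theorems.
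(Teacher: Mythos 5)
Your overall framework is the right one (the paper also deduces non-embeddedness from the partial converse to Green's criterion, packaged as Corollary \ref{corsym}, and gets (1) and the Brody hyperbolicity in (2) from (3) via Theorem \ref{thm:LevinCompositio}), but the proposal has a genuine gap: the theorem is an existence statement, and you never actually construct the curve. The entire content lies in a delicate numerical balance that your sketch does not identify. The paper takes $\Xbar$ of gonality exactly $d-1$ and genus $g>(d-2)(d-1)$, chooses a degree-$(d-1)$ map $\phi:\Xbar\to\PP^1$ unramified over $0,\infty$, and sets the boundary to be the $2d-2$ points of $\phi^{-1}(\{0,\infty\})$ together with one extra point $P$. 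The low gonality is what kills hyperbolic embeddedness: taking $T=\{P\}$ in Corollary \ref{corsym}, the boundary stratum $\Sym^{d-1}_{X\cup\{P\}}$ fails to be Brody hyperbolic because $\phi$ restricts to a degree-$(d-1)$ finite map to $\Gm$. Meanwhile Castelnuovo's inequality (gonality $d-1$ plus the genus bound) rules out any degree-$d$ map $\Xbar\to\PP^1$, and since $2d-1>2(d-1)$ this forces (3). Your proposal does not resolve this tension; indeed your claim that ``$g\geq 1$ suffices'' for (3) is false (an elliptic or hyperelliptic curve of arbitrary genus admits low-degree maps to $\PP^1$, and a bad choice of punctures inside two fibers of such a map destroys (3)), and even ``$g$ large'' is not enough without controlling gonality.

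Separately, the specific degenerate object you propose as the witness for non-embeddedness is in the wrong stratum. Pinning $d-1$ coordinates at punctures $a_2,\dots,a_d$ and letting one point roam places you in the stratum $\bigcap_{i\in I}D_i\setminus\bigcup_{j\in J}D_j\cong\Sym^{1}_{X\cup T}$ with $|T|=d-1$, which is just $\Xbar$ minus $d$ points; for the high-genus curves you are forced to use, this is a hyperbolic curve and can never violate Theorem \ref{thm:greenconverse}. The violation must occur in the stratum with only \emph{one} coordinate pinned (at the extra point $P$), where the non-hyperbolicity of $\Sym^{d-1}_{X\cup\{P\}}$ comes not from the curve itself but from the $\Gm$ inside its $(d-1)$-st symmetric power supplied by the gonality pencil. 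Without identifying that pencil and that stratum, the ``sliding disk'' construction has nothing to slide along, so the heart of the argument is missing.
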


The fact that $\Sym^d_{X_\CC}$ is not hyperbolically embedded in $\Sym^d_{\Xbar_\CC}$ for $X$ as in Theorem \ref{thm:finale} still leaves open the possibility that the smooth affine variety $\Sym^d_{X_\CC}$ is hyperbolically embeddable.  However, by carefully choosing the curve $X$, we can show that $\Sym^3_{X_\CC}$ is not hyperbolically embeddable, thereby providing  the first examples of Brody hyperbolic smooth affine threefolds which are \textbf{not} hyperbolically embeddable.

\begin{theorem}[Main Result, VIII]\label{thm:finale2}
Let $\overline{X}$ be a smooth projective connected hyperelliptic curve of genus $g\geq 3$ over $\Qbar$, and let $\iota:\overline{X}\to \overline{X}$ be the hyperelliptic involution. Let $P_1, P_3, P_5$ be pairwise distinct non-Weierstrass points, let $P_2 :=\iota(P_1)$ and $P_4 := \iota(P_3)$, and write $X:= \overline{X}\setminus \{P_1,P_2, P_3,P_4,P_5\}$. Then the following statements hold.
\begin{enumerate}
\item The smooth affine threefold $\Sym^3_X$ is arithmetically hyperbolic over $\Qbar$.
\item For every embedding $\sigma:\Qbar \to \CC$, every projective  variety $Y$ over $\CC$ and every open immersion $X_{\sigma}\to Y$, the smooth affine threefold $\Sym^3_{X_\sigma}$ is Brody hyperbolic, but \textbf{not} hyperbolically embedded in  $Y$.
\end{enumerate}
\end{theorem}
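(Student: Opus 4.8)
The plan splits into an ``arithmetic/Brody'' part, a direct application of Theorem~\ref{thm:LevinCompositio}, and a ``non-embeddability'' part, which is the real work. Throughout I understand the hypotheses to entail that $P_1,\dots,P_5$ are distinct and that $h(P_1),h(P_3),h(P_5)$ are distinct, where $h\colon\overline X\to\PP^1$ is the hyperelliptic map.

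\textbf{Arithmetic and Brody hyperbolicity.} By Theorem~\ref{thm:LevinCompositio} (for $X$ with $d=3$) it suffices to check that every finite morphism $U\to\Gm$ from a dense open $U\subseteq X$ has degree $\ge 4$. Such a morphism extends to a nonconstant $\pi\colon\overline X\to\PP^1$ whose fibre $\pi^{-1}(\{0,\infty\})$ has support containing the five distinct points $P_1,\dots,P_5$; since this fibre consists of $2\deg\pi$ points with multiplicity, $\deg\pi\ge 3$. To exclude $\deg\pi=3$, recall that a hyperelliptic curve of genus $\ge 3$ admits no degree-$3$ map to $\PP^1$: the morphism $(\pi,h)\colon\overline X\to\PP^1\times\PP^1$ factors as $\overline X\xrightarrow{\nu}C\hookrightarrow\PP^1\times\PP^1$ with $\deg\nu$ dividing both $3$ and $2$, hence $\deg\nu=1$; but then $C$ has bidegree $(3,2)$, so $g=g(\overline X)\le p_a(C)=(3-1)(2-1)=2$, a contradiction. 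Thus $\deg\pi\ge 4$, and Theorem~\ref{thm:LevinCompositio} yields at once that $\Sym^3_X$ is arithmetically hyperbolic over $\Qbar$ (statement~(1)) and that $\Sym^3_{X_\sigma}$ is Brody hyperbolic for every $\sigma$.

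\textbf{Locating the defect.} Put $X':=\overline X\setminus\{P_1,P_2,P_3,P_4\}$. Since $\{P_1,P_2\}$ and $\{P_3,P_4\}$ are $\iota$-conjugate pairs, $P_1+P_2\sim P_3+P_4$, so there is $f$ on $\overline X$ with $\operatorname{div}(f)=(P_1+P_2)-(P_3+P_4)$ (explicitly $f=h^{\ast}m$ with $m$ the Möbius map sending $h(P_1)$ to $0$ and $h(P_3)$ to $\infty$); then $f$ restricts to a finite degree-$2$ morphism $X'\to\Gm$, so by Theorem~\ref{thm:LevinCompositio} applied to $X'$ with ``$d$''$=2$, the surface $\Sym^2_{X'_{\CC}}$ is \emph{not} Brody hyperbolic — the relevant entire curve having image the $\Gm$ obtained from the rational curve $\{h^{\ast}(t):t\in\PP^1\}\subseteq\Sym^2_{\overline X}$ by deleting the points over $h(P_1),h(P_3)$. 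This surface occurs as a boundary stratum of $\Sym^3_X$ in $\Sym^3_{\overline X}$: with $B_i:=\{D:P_i\in\operatorname{supp}D\}\cong\Sym^2_{\overline X}$, the boundary $\Sym^3_{\overline X}\setminus\Sym^3_X=\bigcup_{i=1}^5 B_i$ is simple normal crossings, and $D\mapsto D+P_5$ identifies $\Sym^2_{X'}$ with the open stratum $B_5\setminus\bigcup_{i\le4}B_i$. All other strata are Brody hyperbolic: for $i\le4$, $B_i\setminus\bigcup_{j\ne i}B_j\cong\Sym^2_{\overline X\setminus\{P_j:j\ne i\}}$ with $\{P_j:j\ne i\}$ not a union of two $\iota$-pairs, and the deeper strata $B_i\cap B_j$, $B_i\cap B_j\cap B_k$ are thrice-punctured curves of genus $\ge3$ or points. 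By the partial converse to Green's criterion, $\Sym^3_X$ is therefore \emph{not} hyperbolically embedded in $\Sym^3_{\overline X}$.

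\textbf{Compactification-independence, and the main obstacle.} It remains to upgrade this to: $\Sym^3_{X_\sigma}$ is hyperbolically embedded in no projective variety. The key geometric input is that the rational curves $R_Q:=\{h^{\ast}(t)+Q:t\in\PP^1\}\cong\PP^1$ in $\Sym^3_{\overline X}$ meet $\Sym^3_X$ in thrice-punctured spheres for $Q\in X$ and degenerate, as $Q\to P_5$, onto $R_{P_5}\subseteq B_5$, which meets the non-hyperbolic stratum $B_5\setminus\bigcup_{i\le4}B_i$ in a $\Gm$. The plan is to encode this degeneration as a holomorphic family $\Phi(z,\tau)=h^{\ast}(t(z))+P_5(\tau)$, with $z$ ranging over the universal cover of that $\Gm$ (away from the discrete preimages of $h(P_5)$) and $P_5(\tau)$ over a coordinate disc at $P_5$, so that for $\tau\ne0$ the map $\Phi(\cdot,\tau)$ takes values in $\Sym^3_X\hookrightarrow Y$ and escapes every compact subset of $\Sym^3_X$ as $\tau\to0$; Kwack's extension theorem for the hyperbolic embedding $\Sym^3_X\hookrightarrow Y$ \cite{Kwack,KobayashiBook} then extends $\Phi$ across $\{\tau=0\}$ and across the $z$-punctures to $\widetilde\Phi\colon\CC\times\Delta\to Y$, whose restriction $\widetilde\Phi(\cdot,0)$ is a nonconstant entire curve contained in the boundary of $Y$ (its non-degeneracy coming from Kobayashi's compactness theorem \cite{KobayashiBook} for maps of curves into the hyperbolically embedded $\Sym^3_X$, which forbids the family $R_Q$ from collapsing). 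I expect the main obstacle to be exactly this last step: controlling the limit cycle $\widetilde\Phi(\CC\times\{0\})$ inside an \emph{arbitrary} compactification $Y$ — showing it is one-dimensional, that it meets the deeper strata of $Y$ in at most two points (so that the ambient stratum contains a $\Gm$ and is not Brody hyperbolic, contradicting the partial converse to Green's criterion), and reducing to $Y$ smooth with simple normal crossings boundary without losing the hyperbolic embedding. Everything else — the verification of Theorem~\ref{thm:LevinCompositio}'s numerical criterion and the failure of Brody hyperbolicity of $\Sym^2_{X'}$ — is routine given the elementary geometry of hyperelliptic curves.
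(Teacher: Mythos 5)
Your first two steps are sound and agree with the paper: the numerical criterion of Theorem~\ref{thm:LevinCompositio} (no degree~$\le 3$ map $\overline X\to\PP^1$ sending the five punctures into two points, since a genus~$\ge 3$ hyperbolic curve has no $g^1_3$) gives (1) and Brody hyperbolicity, and the $\Gm$ sitting in the stratum $B_5\setminus\bigcup_{i\le 4}B_i\cong\Sym^2_{\overline X\setminus\{P_1,\dots,P_4\}}$, coming from the hyperelliptic pencil punctured at $P_1+P_2$ and $P_3+P_4$, rules out a hyperbolic embedding into $\Sym^3_{\overline X}$ via the partial converse to Green's criterion. But the theorem asserts non-embeddedness in an \emph{arbitrary} projective $Y$, and that is exactly the step you flag as an ``obstacle'' and do not close. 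Moreover, the strategy you sketch for it would not go through as written: the partial converse to Green's criterion (Theorem~\ref{thm:greenconverse}) needs the boundary divisors of $Y$ to be in general position, which an arbitrary $Y$ does not provide; and producing a nonconstant entire curve in the boundary of the closure of $\Sym^3_{X_\sigma}$ in $Y$ is not by itself a contradiction with hyperbolic embeddedness. The real difficulty is that $Y$ might simply collapse the offending $\Gm$ to a point, and your argument does not exclude this.

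The paper closes the gap differently (Theorem~\ref{thm:final_hypemb}). Assuming $\Sym^3_{X}\subset Y$ is a hyperbolic embedding, the Kiernan--Kobayashi--Kwack extension theorem (applicable because $\Sym^3_{\overline X}\setminus\Sym^3_X$ is normal crossings) extends the identity to a morphism $\Sym^3_{\overline X}\to Y$. The Noguchi--Winkelmann argument produces sequences in $\Sym^3_X$ converging to any two points of the boundary $\Gm$ with Kobayashi distance tending to $0$, so hyperbolic embeddedness in $Y$ forces this morphism to contract the $\Gm$ --- i.e.\ the collapse you cannot exclude is shown to be \emph{forced}. The contradiction then comes from rigidity: the closure of the $\Gm$ is one fibre $F$ of the surface $S=\{P+\iota(P)+Q\}\subset\Sym^3_{\overline X}$ fibred over $\overline X$ via $Q$; by the Rigidity Lemma, contracting $F$ forces $S\to Y$ to contract every fibre, contradicting that $S\to Y$ is birational onto its image. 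These two ingredients --- reducing the arbitrary compactification to a morphism out of $\Sym^3_{\overline X}$, and the rigidity argument on the ruled surface $S$ --- are what your proposal is missing, and without them the proof is incomplete.
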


\begin{con} Throughout this paper, we let $k$ be an algebraically closed field of characteristic zero. A variety over $k$ is a finite type separated scheme over $k$.  If $X$ is a variety over $k$ and   $A\subset k$ is a subring, then a \emph{model for $X$ over $A$} is a pair $(\mathcal{X},\phi)$ with $\mathcal{X}\to \Spec A$ a finite type separated scheme and $\phi:\mathcal{X}\otimes_A k \to X$ an isomorphism of schemes over $k$. We  omit $\phi$  from our notation.
If $X$ is a locally finite type scheme over $\CC$, we let $X^{\an}$ denote the associated complex analytic space. 
\end{con}

\begin{ack} We are grateful to Erwan Rousseau for helpful discussions.  The second-named author is supported in part by NSF grant DMS-1352407.
The first-named author gratefully acknowledges support from SFB/Transregio 45.
\end{ack}

  \section{Urata's theorem in the logarithmic case}\label{section:2}
  We follow  Kov\'acs-Lieblich's terminology  and  use the notion  of ``weak boundedness'' for quasi-projective schemes; see \cite{KovacsLieblich}.   
  
 \begin{definition}[Kov\'acs-Lieblich]\label{def:wb} Let $\overline{X}$ be a   projective scheme over $k$, let $\mathcal{L}$ be an ample line bundle on $\overline{X}$, and let $X\subset \overline{X}$ be a dense open subscheme. 
We say that $X$ is \emph{weakly bounded over $k$ in $\overline{X}$ with respect to $\mathcal{L}$} if, for every integer $g\geq 0$, and every $d\geq 0$, there  is a real number $\alpha(X,\overline{X}, \mathcal{L},g,d)$ such that, for every smooth projective connected curve $\overline{C}$ over $k$ of genus $g$, every dense open subscheme $C\subset \overline{C}$ with $\# (\overline{C}\setminus C) = d$, and   every morphism $f:C\to X$, the inequality
\[ 
\deg_{\overline{C}} \overline{f}^\ast \mathcal{L} \leq \alpha(X, \overline{X}, \mathcal{L}, g,d)
\] holds, where $\overline{f}:\overline{C}\to \overline{X}$ is the (unique) extension of $f:C\to X$.
 \end{definition}
 
 For $Y$ and $X$ projective varieties over $k$, we let $\underline{\Hom}_k(Y,X)$ be the locally finite type scheme parametrizing morphisms from $Y$ to $X$; see \cite{Nitsure}.  Recall that a projective scheme $X$ over $k$ is $1$-bounded over $k$ if, for every smooth projective connected curve $C$ over $k$, the scheme $\underline{\Hom}_k(C,X)$ is of finite type over $k$; see \cite[\S4]{JKa}.
 By \cite[Theorem~1.14]{JKa},  a \emph{projective} variety $X$  is weakly bounded (in itself) over $k$ if and only if it is $1$-bounded over $k$ (as defined in \cite[Definition~4.1]{JKa}).  
 Thus, the notion of weakly boundedness extends the notion of $1$-boundedness for projective varieties to quasi-projective varieties.

Our starting point is the following basic proposition. It says that the set of morphisms from a curve to a weakly bounded variety is (naturally) a quasi-compact (Zariski-)constructible subset of a certain Hom-scheme. More precisely, if $\overline{X}$ is a projective variety over $k$ and $\overline{C}$ is a smooth projective connected curve over $k$, then  we are interested in morphisms $C\to X$, where $C\subset \overline{C}$ is a dense open of $\overline{C}$ and $X\subset \overline{X}$ is a dense open of $\overline{X}$, respectively. The set $\Hom_k(C,X)$ of such morphisms is naturally a subset of the set  $\Hom_k(\overline{C},\overline{X})$ of morphisms $\overline{C}\to \overline{X}$ by the valuative criterion for properness. We   identify the latter set with the set of $k$-points of the scheme $\underline{\Hom}_k(\overline{C},\overline{X})$.

 \begin{proposition}\label{thm:kl}  \cite[Proposition~3.2]{JLitt}
 Let $\overline{X}$ be a projective variety over $k$, and  let $X\subset \overline{X}$ be a dense open subscheme. Let $\overline{C}$ be a smooth projective curve and let $C\subset\overline{C}$ be a dense open subscheme. If  there is an ample line bundle   $\mathcal{L}$  on $\overline{X}$ such that $X$ is weakly bounded over $k$ in $\overline{X}$ with respect to $\mathcal{L}$, then  
 $\Hom_k(C,X)$ is a   quasi-compact constructible subset of $\underline{\Hom}_k(\overline{C},\overline{X})(k)$.
 \end{proposition}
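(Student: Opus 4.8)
The plan is to exploit the weak boundedness hypothesis to cut down the infinite-type scheme $\underline{\Hom}_k(\overline{C},\overline{X})$ to a finite-type piece, and then to show that the locus of morphisms respecting the boundary is constructible inside that piece. First I would recall that $\underline{\Hom}_k(\overline{C},\overline{X})$ is a locally finite type scheme over $k$ which is a countable union of open-and-closed subschemes $\underline{\Hom}_k^{P}(\overline{C},\overline{X})$ indexed by the (finitely many relevant) Hilbert polynomials $P$ of graphs in $\overline{C}\times\overline{X}$, equivalently by the intersection number $\deg_{\overline{C}}\overline{f}^\ast\mathcal{L}$ together with the fixed genus $g$ of $\overline{C}$; each such piece is projective (or at least of finite type) over $k$. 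Given $f\colon C\to X$ with unique extension $\overline{f}\colon\overline{C}\to\overline{X}$, the weak boundedness of $X$ in $\overline{X}$ with respect to $\mathcal{L}$ bounds $\deg_{\overline{C}}\overline{f}^\ast\mathcal{L}\le \alpha(X,\overline{X},\mathcal{L},g,d)$, where $d=\#(\overline{C}\setminus C)$ is fixed. Hence every point of $\Hom_k(C,X)$, viewed inside $\underline{\Hom}_k(\overline{C},\overline{X})(k)$ via the valuative criterion of properness, lies in the finite union $H:=\bigcup_{e\le\alpha}\underline{\Hom}_k^{e}(\overline{C},\overline{X})$ of those finitely many finite-type components with bounded degree. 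This $H$ is of finite type over $k$, in particular quasi-compact, which gives the quasi-compactness half of the statement.

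It then remains to identify $\Hom_k(C,X)$ as a constructible subset of $H(k)$. Concretely, $\Hom_k(C,X)$ is the set of $k$-points $\overline{f}\in H(k)$ such that $\overline{f}(C)\subset X$, equivalently $\overline{f}^{-1}(D)\subset \overline{C}\setminus C$ where $D=\overline{X}\setminus X$; equivalently, writing $\overline{C}\setminus C=\{q_1,\dots,q_d\}$, the condition is $\overline{f}(\overline{C}\setminus\{q_1,\dots,q_d\})\cap D=\varnothing$. I would realize this set-theoretically via the universal morphism: over $H$ there is a universal family $F\colon \overline{C}\times H\to\overline{X}$, and $Z:=F^{-1}(D)$ is a closed subscheme of $\overline{C}\times H$; the condition that a point $h\in H(k)$ defines a morphism landing in $X$ is exactly that the fiber $Z_h$ is contained in the finite set $\{q_1,\dots,q_d\}\times\{h\}$, i.e. that $Z_h\subseteq \{q_1,\dots,q_d\}\times\{h\}$. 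Projecting $Z\setminus\big(\{q_1,\dots,q_d\}\times H\big)$ to $H$ via the proper map $\overline{C}\times H\to H$ yields a closed subset $W\subset H$ (properness of $\overline{C}$, hence of $\overline{C}\times H\to H$, guarantees the image is closed, not merely constructible), and $\Hom_k(C,X)=H(k)\setminus W(k)$ is open in $H(k)$, hence certainly constructible. (If one worries about scheme-theoretic subtleties in $Z$ — e.g. embedded points sitting over the $q_i$ — one replaces $Z$ by its support, which only changes things on a nowhere-dense closed set and does not affect the set-theoretic image; alternatively Chevalley's theorem already gives constructibility of the image and that suffices for the statement.)

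The main obstacle, I expect, is the bookkeeping in the second paragraph: making precise the passage between ``$\overline{f}\colon\overline{C}\to\overline{X}$ restricts to a morphism $C\to X$'' and a clean closed (or constructible) condition on the Hom-scheme, while handling the valuative-criterion identification $\Hom_k(C,X)\hookrightarrow \underline{\Hom}_k(\overline{C},\overline{X})(k)$ and the possibility that $\overline{f}(q_i)\in D$ for some boundary point $q_i$ (which is allowed). The degree-boundedness input from Definition \ref{def:wb} is the only genuinely nontrivial ingredient and it is handed to us as a hypothesis; everything else is standard finiteness and constructibility for Hom-schemes of projective varieties (for which I would cite \cite{Nitsure} and the discussion of $1$-boundedness in \cite{JKa}), so modulo the careful formulation this is essentially the cited \cite[Proposition~3.2]{JLitt}. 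I would therefore keep the write-up short, emphasizing (i) the reduction to finitely many finite-type components via weak boundedness, and (ii) the constructibility of the ``lands in $X$'' locus via Chevalley applied to the universal family.
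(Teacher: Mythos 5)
Your argument is essentially the standard proof of this statement (the paper itself gives no proof, deferring to \cite[Proposition~3.2]{JLitt}, whose proof proceeds exactly as you describe: weak boundedness bounds $\deg_{\overline{C}}\overline{f}^\ast\mathcal{L}$, hence confines everything to finitely many finite-type components of the Hom-scheme, and the ``lands in $X$'' condition is handled via the universal family). One caveat: your claim that $W$ is closed (so that $\Hom_k(C,X)$ is open) is not justified --- $Z\setminus(\{q_1,\dots,q_d\}\times H)$ is only locally closed in $\overline{C}\times H$, and properness does not make its image closed --- but your fallback to Chevalley's theorem gives constructibility of $W$, and constructibility is all the proposition asserts, so the proof stands.
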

 
 Note that the property of being weakly bounded in $\overline{X}$ may  depend \emph{a priori} on the choice  of the ample line bundle $\mathcal{L}$ on $\overline{X}$ (and we ignore whether this is really the case). However, the conclusion of Proposition \ref{thm:kl} is \emph{independent} of $\mathcal{L}$. 

Our aim is to prove Urata's theorem in the logarithmic setting. Recall that Urata's theorem for a  proper Brody hyperbolic variety $X$ over $\mathbb{C}$ follows quite easily from the fact that the moduli space of maps $C\to X$ from any smooth proper curve $C$ is a proper scheme over $\CC$, and thus in particular of finite type.

In the logarithmic setting, when $X$ is hyperbolically embeddable, Kobayashi proves that the connected components of the space $\Hom_{\CC}((C,c),(X,x))$ of maps $f:C\to X$ with $f(c) =x$ are compact, but does not provide any information on the boundedness of this space. The additional ingredient we need in the ``logarithmic'' setting is the following theorem of Pacienza-Rousseau \cite{PacienzaRousseau}. This theorem actually follows  from their  extension of a theorem of Demailly \cite{Demailly} on the algebraic hyperbolicity of Brody hyperbolic varieties. Indeed, Demailly proved that, if $X$ is a Brody hyperbolic projective variety over $\CC$, then $X$ is algebraically hyperbolic over $\CC$ (in the sense of \cite{Demailly, JKa}). Pacienza-Rousseau's theorem is the natural extension of Demailly's theorem to the logarithmic setting.
 
 \begin{theorem}[Pacienza-Rousseau]\label{thm:pr} Let $\overline{X}$ be a  smooth projective variety over $\CC$, let $\mathcal{L}$ be an ample line bundle on $\overline{X}$, and let $D$ be a divisor on $\overline{X}$ with $X:=\overline{X}\setminus D$. If $X$ is hyperbolically embedded in $\overline{X}$, then $X$ is weakly bounded over $\CC$ in $\overline{X}$ with respect to $\mathcal{L}$.
 \end{theorem}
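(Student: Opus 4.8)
The plan is to read Theorem~\ref{thm:pr} as the weak-boundedness reformulation of the \emph{logarithmic algebraic hyperbolicity} of a hyperbolically embedded variety, i.e.\ as the extension to the open setting of Demailly's theorem \cite{Demailly} that a Brody hyperbolic projective variety is algebraically hyperbolic. Accordingly I would first reduce the statement to the following uniform estimate: there is a real number $\epsilon > 0$, depending only on $X\subset\overline{X}$ and $\mathcal{L}$, such that for every smooth projective connected curve $\overline{C}$, every dense open $C=\overline{C}\setminus S$, and every morphism $f:C\to X$,
\[
\deg_{\overline{C}}\overline{f}^{\ast}\mathcal{L}\;\le\;\tfrac{1}{\epsilon}\,\bigl(2g(\overline{C})-2+\# S\bigr).
\]
Given this, weak boundedness is immediate: set $\alpha(X,\overline{X},\mathcal{L},g,d):=\max\{\,0,\ \tfrac{1}{\epsilon}(2g-2+d)\,\}$.

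To prove the estimate I would run the classical Ahlfors--Schwarz argument. Fix a smooth Hermitian metric on $\mathcal{L}$ with positive curvature form $\omega$, so that $\deg_{\overline{C}}\overline{f}^{\ast}\mathcal{L}=\int_{\overline{C}}\overline{f}^{\ast}\omega$. The input that makes hyperbolic embeddedness enter is Kobayashi's infinitesimal criterion \cite[Chapter~3]{KobayashiBook}: since $X$ is hyperbolically embedded in $\overline{X}$, there is a continuous Hermitian length function $E$ on $\overline{X}$ whose restriction to $X$ is dominated by the Kobayashi--Royden infinitesimal pseudometric, $E|_X\le F_X$. By compactness of $\overline{X}$ one has $\omega\le C_1\,\omega_E$ for some constant $C_1>0$, where $\omega_E$ is the $(1,1)$-form attached to $E$. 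Now assume $2g(\overline{C})-2+\# S>0$ and equip $C$ with its complete hyperbolic metric; its Kobayashi--Royden metric then coincides with this Poincaré metric $\omega_P$ up to normalization, and the length-decreasing property of $F_X$ under the holomorphic map $f:C\to X$ gives $f^{\ast}\omega_E\le f^{\ast}\omega_{F_X}\le\omega_P$ pointwise. Integrating over $C$ (and using that $S=\overline{C}\setminus C$ is finite, hence of measure zero) together with Gauss--Bonnet yields
\[
\deg_{\overline{C}}\overline{f}^{\ast}\mathcal{L}\;=\;\int_{C}f^{\ast}\omega\;\le\;C_1\!\int_{C}f^{\ast}\omega_E\;\le\;C_1\!\int_{C}\omega_P\;=\;C_1 c_0\,\bigl(2g(\overline{C})-2+\# S\bigr),
\]
where $c_0$ is the universal Gauss--Bonnet constant; this is the desired inequality with $\epsilon=(C_1 c_0)^{-1}$.

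It remains to dispose of the finitely many cases with $2g(\overline{C})-2+\# S\le 0$, namely $C\in\{\PP^1,\ \mathbb{A}^1,\ \Gm,\ \text{an elliptic curve}\}$. In each of these $C$ receives a holomorphic map from $\CC$ with dense (indeed surjective in the last three cases) image, and since $X$ is hyperbolically embedded it is in particular Brody hyperbolic, so every holomorphic map $\CC\to X$ is constant; hence $f:C\to X$ is constant and $\deg_{\overline{C}}\overline{f}^{\ast}\mathcal{L}=0\le\alpha$. This establishes weak boundedness with the $\alpha$ defined above. The step I expect to be the crux --- and the reason the logarithmic case genuinely requires new input beyond Demailly's compact argument --- is the invocation of Kobayashi's infinitesimal criterion: one needs the lower bound on $F_X$ to be furnished by a metric that extends continuously across the boundary divisor $D$, which is exactly the content of hyperbolic embeddedness (as opposed to mere Kobayashi hyperbolicity of the open variety $X$). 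One must also check that the constants $C_1$ and $c_0$ are independent of $\overline{C}$, $S$ and $f$, since it is this uniformity that turns the pointwise Schwarz estimate into the weak boundedness assertion.
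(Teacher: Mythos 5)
Your proposal is correct, but it takes a different route from the paper: the paper does not re-prove the analytic estimate at all, it simply quotes Pacienza--Rousseau's Theorem~5 (their logarithmic extension of Demailly's algebraic hyperbolicity theorem), which gives $\epsilon\deg_{\overline{C}'}\overline{f}'^{\ast}\mathcal{L}\le 2g'-2+i(\overline{f}(\overline{C}),D)$ for the \emph{normalization $\overline{C}'$ of the image curve}, and then spends its effort translating that into a bound $\deg_{\overline{C}}\overline{f}^{\ast}\mathcal{L}\le\epsilon^{-1}(2g-2+d)$ on the source curve via the factorization $\overline{C}\to\overline{C}'\to\overline{X}$ (the paper also has to remark that Pacienza--Rousseau's simple normal crossings hypothesis on $D$ is immaterial). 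You instead give a self-contained proof of the source-curve estimate directly: the characterization of hyperbolic embeddedness by a continuous length function $E$ on $\overline{X}$ with $E|_X\le F_X$, the comparison $\omega\le C_1\,\omega_E$ by compactness of $\overline{X}$, the distance-decreasing property of the Kobayashi--Royden metric against the Poincar\'e metric of $C$, and Gauss--Bonnet. This is essentially the proof of the cited Pacienza--Rousseau theorem itself, so the underlying mathematics coincides; what your version buys is that it avoids the detour through the image curve and the normal crossings caveat, while the paper's version buys brevity by outsourcing the analysis. Your treatment of the degenerate cases $2g-2+\#S\le 0$ (constancy of $f$ via Brody hyperbolicity) is correct and necessary. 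The only points to polish are technical: $E$ and $F_X$ are length functions rather than Hermitian metrics (and $F_X$ is merely upper semicontinuous), so the pointwise inequalities should be stated for lengths of tangent vectors and then squared to compare the induced area forms on the Riemann surface $C$, where the integrals are still well defined.
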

 \begin{proof} This follows from \cite[Theorem~5]{PacienzaRousseau} (although Pacienza-Rousseau require $D$ to have simple normal crossings, this is irrelevant to the proof). As our notation differs a bit from   \emph{loc. cit.}, we provide the details.
 
 Let $\overline{C}$ be a smooth projective connected curve of genus $g$ over $k$, let $C\subset \overline{C}$ be a dense open with $d:=\#(\overline{C}\setminus C)$, and let $f:C\to X$ be a morphism. Let $\overline{f}:\overline{C}\to \overline{X}$ be the unique extension of this morphism. Let $\overline{C}'$ be the normalization of the irreducible curve $\overline{f}(\overline{C})$, and note that $\overline{f}:\overline{C}\to \overline{X}$ factors over a morphism $\overline{f}':\overline{C}'\to \overline{X}$. Let $\nu:\overline{C}'\to \overline{f}(\overline{C})$ be the normalization map. We follow \cite{PacienzaRousseau} and let $i(\overline{f}(\overline{C}),D)$ be the number of elements in $\nu^{-1}(D)$. Note that, as $C$ lands in $X=\overline{X}\setminus D$, the inequality  $i(\overline{f}(\overline{C}),D) \leq d'$ holds, where $d' $ is the cardinality of the complement of the image of $C$ in $\overline{C}'$.  Therefore, 
as $X$ is hyperbolically embedded in the smooth projective variety $\overline{X}$ with $D:=\overline{X}\setminus X$ a divisor, it follows from  \cite[Theorem 5]{PacienzaRousseau} that there is an $\epsilon$ depending only on $\overline{X}$, $D$, and $\mathcal{L}$ such that
\[
\epsilon \deg_{\overline{C}'} \overline{f}'^{\ast} \mathcal{L} \leq 2g'-2 +i(\overline{f}(\overline{C}),D) \leq 2g'-2 +d',
\]  	 where $g'$ is the genus of $\overline{C}'$.  It follows from this inequality that 
\[ \deg_{\overline{C}}\overline{f}^\ast \mathcal{L} \leq  \epsilon^{-1} \deg(\overline{C}/\overline{C}')  (2g'-2+d') \leq \epsilon^{-1} (2g-2 +d).\] We define $\alpha := \epsilon^{-1}(2g-2+d)$ and see that, as required, the smooth affine variety $X$ is weakly bounded in $\overline{X}$ with respect to $\mathcal{L}$. 
 \end{proof}

 \begin{theorem} \label{thm:hypemb_is_geomhyp0}
Let $\overline{X}$ be a smooth projective      variety  over $\CC$  and let $D\subset \overline{X}$ be a  divisor such that $X:=\overline{X}\setminus D$ is hyperbolically embedded in $\overline{X}$.  
If $C$ is  a smooth quasi-projective connected curve over $\mathbb{C}$ with smooth projective model  $\overline{C}$, $ {c}\in \overline{C}(\CC)$, and $x$ in $\overline{X}(\CC)$, then the set of morphisms $\overline{f}:\overline{C}\to \overline{X}$ with $\overline{f}(C) \subset X$ and $\overline{f}( c)  = x$ is finite.
\end{theorem}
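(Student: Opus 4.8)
The plan is to reduce the statement to the finiteness of a suitable subset of a $\Hom$-scheme, and then to combine Pacienza--Rousseau's weak-boundedness theorem (Theorem~\ref{thm:pr}) with Kobayashi's big-Picard--type extension theorem for hyperbolically embedded varieties. Write $\Sigma$ for the set of morphisms $\overline f\colon\overline C\to\overline X$ with $\overline f(C)\subseteq X$ and $\overline f(c)=x$; the claim is that $\Sigma$ is finite. First I would invoke Theorem~\ref{thm:pr} to get that $X$ is weakly bounded over $\CC$ in $\overline X$ with respect to some ample line bundle $\mathcal L$ on $\overline X$, so that by Proposition~\ref{thm:kl} the set $\Hom_\CC(C,X)$ is a quasi-compact constructible subset of $\underline{\Hom}_\CC(\overline C,\overline X)(\CC)$. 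Since evaluation at $c$ is a morphism of schemes $\ev_c\colon\underline{\Hom}_\CC(\overline C,\overline X)\to\overline X$, its fibre over $x$ is closed, so $\Sigma=\Hom_\CC(C,X)\cap\ev_c^{-1}(x)(\CC)$ is again a quasi-compact constructible subset of $\underline{\Hom}_\CC(\overline C,\overline X)(\CC)$, in particular contained in a finite-type open subscheme. If $\Sigma$ were infinite it would contain an irreducible locally closed curve; normalizing its projective closure and passing to a dense open I would obtain a smooth affine connected curve $B$ with smooth projective model $\overline B$ together with a non-constant morphism $\gamma\colon B\to\underline{\Hom}_\CC(\overline C,\overline X)$ with $\gamma(B(\CC))\subseteq\Sigma$. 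It therefore suffices to rule out the existence of such a $\gamma$.

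Assume such a $\gamma$ exists, and let $F\colon B\times\overline C\to\overline X$ be the morphism with $F|_{\{b\}\times\overline C}=\gamma(b)$ for all $b\in B(\CC)$ coming from the universal family. Since $\gamma(b)\in\Sigma$ for every $b\in B(\CC)$, we have $F(\{b\}\times C)\subseteq X$ and $F(b,c)=x$; in particular $F(B\times C)\subseteq X$ and $F|_{B\times\{c\}}$ is the constant morphism $x$. Now $A:=\bigl(\overline B\times(\overline C\setminus C)\bigr)\cup\bigl((\overline B\setminus B)\times\overline C\bigr)$ is a normal crossings divisor on the smooth projective surface $\overline B\times\overline C$, and $F$ restricts to a holomorphic map $(B\times C)^{\an}=(\overline B\times\overline C)^{\an}\setminus A^{\an}\to X^{\an}$. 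Because $X$ is hyperbolically embedded in the compact complex manifold $\overline X^{\an}$, Kobayashi's extension theorem \cite{KobayashiBook} would let me extend this to a holomorphic map $(\overline B\times\overline C)^{\an}\to\overline X^{\an}$; by GAGA this is (the analytification of) a morphism $\overline F\colon\overline B\times\overline C\to\overline X$, which necessarily agrees with $F$ on the Zariski-dense open $B\times\overline C$. In particular $\overline F|_{\{b\}\times\overline C}=\gamma(b)$ for $b\in B(\CC)$, and $\overline F|_{\overline B\times\{c\}}$ is the constant morphism $x$, since it is so on the Zariski-dense subset $B\times\{c\}$.

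To conclude, fix $p\in\overline C(\CC)$. The curves $\overline B\times\{p\}$ and $\overline B\times\{c\}$ are both fibres of the projection $\overline B\times\overline C\to\overline C$, hence numerically equivalent, so
\[
\deg_{\overline B}\bigl((\overline F|_{\overline B\times\{p\}})^{\ast}\mathcal L\bigr)=\overline F^{\ast}\mathcal L\cdot[\overline B\times\{p\}]=\overline F^{\ast}\mathcal L\cdot[\overline B\times\{c\}]=0,
\]
the last equality because $\overline F|_{\overline B\times\{c\}}$ is constant. As $\mathcal L$ is ample on $\overline X$, a morphism from the smooth projective curve $\overline B$ to $\overline X$ along which $\mathcal L$ pulls back to a degree-zero line bundle must be constant; hence $\overline F|_{\overline B\times\{p\}}$ is constant for every $p\in\overline C(\CC)$. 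Therefore $\overline F(b,p)$ is independent of $b$, so $\gamma(b)=\overline F|_{\{b\}\times\overline C}$ is independent of $b\in B(\CC)$, contradicting the non-constancy of $\gamma$. This contradiction shows $\Sigma$ is finite, which is the assertion.

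The two substantive inputs are exactly the two places hyperbolic embeddedness enters: weak boundedness (Theorem~\ref{thm:pr}), which realizes the relevant moduli set inside a \emph{finite-type} $\Hom$-scheme, and Kobayashi's extension theorem, which lets one \emph{compactify the base} $B$ of any hypothetical one-parameter family; after that the closing intersection-theoretic step — a contracted fibre of the projection to $\overline C$ forcing all such fibres to be contracted — is elementary and needs no hyperbolicity. I expect the delicate point to be the invocation of the extension theorem: one must check that $F$ extends across the normal crossings divisor $A$ simultaneously (rather than merely across each boundary stratum), and that the resulting holomorphic extension is genuinely an algebraic morphism $\overline B\times\overline C\to\overline X$, so that the numerical argument applies to it. (Note that the degenerate cases where $B$ or $C$ fails to be hyperbolic need no separate treatment: the extension theorem and the final argument are insensitive to this, as the normal crossings divisor $A$ may perfectly well be empty or a disjoint union of smooth fibres.)
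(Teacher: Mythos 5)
Your proof is correct, and while the first half coincides with the paper's (Pacienza--Rousseau's Theorem~\ref{thm:pr} plus Proposition~\ref{thm:kl} to realize $\Hom_{\CC}(C,X)$ as a quasi-compact constructible subset of $\underline{\Hom}_{\CC}(\overline{C},\overline{X})(\CC)$), the second half takes a genuinely different route. The paper passes to the analytic closure $\overline{\mathcal{F}}$ of this set in $\underline{\Hom}_{\CC}(\overline{C},\overline{X})^{\an}$, notes it has finitely many connected components by quasi-compactness, and then quotes Kobayashi's compactness/finite-fibre theorem for the universal evaluation map on $\overline{C}\times\overline{\mathcal{F}}$ \cite[Theorem~6.4.10.(4)]{KobayashiBook} to conclude. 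You instead argue by contradiction: a hypothetical positive-dimensional family $\gamma\colon B\to\underline{\Hom}_{\CC}(\overline{C},\overline{X})$ inside the pointed locus is compactified in the base direction via the Kiernan--Kobayashi--Kwack extension theorem (which the paper itself invokes later, in the proof of Theorem~\ref{thm:final_hypemb}, as \cite[Theorem~6.3.9]{KobayashiBook}), after which the contracted fibre $\overline{B}\times\{c\}$ forces every fibre $\overline{B}\times\{p\}$ to be contracted by ampleness of $\mathcal{L}$, so $\gamma$ is constant. Each approach buys something: the paper's is shorter once one accepts the compactness of the space of maps and the finiteness of fibres of the evaluation map as a black box, whereas yours isolates the analytic input to the big-Picard-type extension across a normal crossings boundary and finishes with a purely algebraic rigidity argument; as you note, it also absorbs the case $D=\emptyset$ (which the paper treats separately via Urata) into the same argument. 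The points you flag as delicate are indeed the ones to check, and they do check out: the extension theorem applies to the complement of the full normal crossings divisor $A$ at once, and GAGA makes the extension algebraic so that the intersection-theoretic step is legitimate.
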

\begin{proof}  
If $D=\emptyset$, then $X =\overline{X}$ is a projective Kobayashi hyperbolic variety in which case the result follows from Urata's theorem \cite[Theorem~5.3.10]{KobayashiBook}. Thus,  we may and do assume $D\neq \emptyset$.  Let $\mathcal{L}$ be an ample line bundle on $\overline{X}$.
By Pacienza-Rousseau's theorem (Theorem \ref{thm:pr}), the variety $X$ is weakly bounded over $\CC$ in $\overline{X}$ with respect to $\mathcal{L}$. Therefore, by Proposition \ref{thm:kl}, the subset $\Hom_{\CC}(C,X)$ is a quasi-compact   constructible subset of  $\underline{\Hom}_{\CC}(\overline{C},\overline{X})(\CC)$. 

 Now, we consider the quasi-compact constructible subset $\mathcal{F}:=\Hom_{\CC}(C,X)$ as a subset of the complex-analytic space $\underline{\Hom}_{\CC}(\overline{C}^{\an}, \overline{X}^{\an}) = \underline{\Hom}_{\CC}(\overline{C},\overline{X})^{\an}$. Let $\overline{\mathcal{F}}$ be the analytic-closure of $\mathcal{F}$ in the analytification $\underline{\Hom}_{\CC}(\overline{C},\overline{X})^{\an}$ of the scheme $\underline{\Hom}_{\CC}(\overline{C},\overline{X})$, and note that $\overline{\mathcal{F}}$ has only finitely many connected components (as $\mathcal{F}$ is quasi-compact). By a theorem of Kobayashi \cite[Theorem 6.4.10.(4)]{KobayashiBook}, the universal evaluation map
 \[
 \overline{C}\times \overline{\mathcal{F}} \to \overline{C}\times  \overline{X}, \quad (c,f)\mapsto (c,f(c))
 \] has finite fibers. This implies  that $\overline{C}\times \mathcal{F} \to \overline{C}\times \overline{X}$ has finite fibers, thereby proving the required finiteness statement. 
 %mponent of the subset $\Hom_{\CC}(C,X)$ is  relatively compact (in the analytic topology), and thus analytically closed in  $\Hom_{\CC}(\overline{C},\overline{X})^{\an}$. Therefore, by \cite[Corollaire~XII.2.3]{SGA1}, the subset $\Hom_{\CC}(C,X)$ is closed in the locally finite type scheme $\Hom_{\CC}(\overline{C},\overline{X})$. Since $\Hom_{\CC}(C,X)$ is a (quasi-compact) variety  whose analytification is compact, it follows from \cite[Proposition~XII.3.2]{SGA1} that $\Hom_{\CC}(C,X)$ is complete. This proves the lemma.
\end{proof}

 We can now deduce the desired finiteness statement (i.e., Urata's finiteness theorem in the logarithmic case) over arbitrary algebraically closed fields of characteristic zero.
 
 \begin{theorem} \label{thm:hypemb_is_geomhyp}
Let $\overline{X}$ be a smooth projective      variety  over $k$,      and let $D\subset \overline{X}$ be a  divisor such that $X:=\overline{X}\setminus D$ is hyperbolically embedded in $\overline{X}$.  
Then,  $X$ is geometrically hyperbolic over $k$.  
\end{theorem}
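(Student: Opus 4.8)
The plan is to bootstrap from the complex case, Theorem~\ref{thm:hypemb_is_geomhyp0}, by a spreading-out argument; all the genuine hyperbolicity content (the Pacienza--Rousseau bound and Kobayashi's finiteness of the universal evaluation map) then enters only through that theorem and through Theorem~\ref{thm:pr}. To begin, I would normalize the datum witnessing that $X$ is hyperbolically embedded in $\overline X$: there are a subfield $k_0\subseteq k$, an embedding $\sigma_0\colon k_0\hookrightarrow\CC$, and models $\mathcal D\subseteq\overline{\mathcal X}$ of $D\subseteq\overline X$ over $k_0$ with $\overline{\mathcal X}_{\CC}\setminus\mathcal D_{\CC}$ hyperbolically embedded in $\overline{\mathcal X}_{\CC}$. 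Since $\overline{\mathcal X}$ and $\mathcal D$ are of finite type, standard limit arguments let me assume $k_0$ is finitely generated over $\QQ$; and since $\overline X/k$ is smooth projective, $D$ is a divisor, and $\overline X$ carries an ample line bundle, after enlarging $k_0$ within $k$ (keeping it finitely generated, and extending $\sigma_0$ accordingly) I may further assume that $\overline{\mathcal X}/k_0$ is smooth projective, that $\mathcal D$ is a divisor, and that $\overline{\mathcal X}$ carries an ample line bundle $\mathcal L$. Put $X_0:=\overline{\mathcal X}\setminus\mathcal D$, so $X=X_0\otimes_{k_0}k$.

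Next I would transport weak boundedness from $\CC$ to $k$. By Theorem~\ref{thm:pr}, $X_{0,\CC}$ is weakly bounded over $\CC$ in $\overline{\mathcal X}_{\CC}$ with respect to $\mathcal L_{\CC}$, say with bounds $\alpha(g,d)$. Given a smooth projective connected curve $\overline C$ over $k$ of genus $g$, a dense open $C\subseteq\overline C$ with $\#(\overline C\setminus C)=d$, and a morphism $f\colon C\to X$, I would choose a finitely generated subfield $L$ with $k_0\subseteq L\subseteq k$ over which $\overline C$, $C$ and $f$ are all defined (using the model $\overline{\mathcal X}\otimes_{k_0}L$ of $\overline X$), and extend $\sigma_0$ to an embedding $\sigma\colon L\hookrightarrow\CC$; this is possible because $L/k_0$ is finitely generated while $\CC$ has infinite transcendence degree over $k_0$. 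Since the degree of a line bundle on a smooth projective curve is invariant under extension of the base field, $\deg_{\overline C}\overline f^{\ast}\mathcal L=\deg_{\overline C_{\CC}}\overline{f_{\CC}}^{\,\ast}\mathcal L_{\CC}\le\alpha(g,d)$, where $(-)_{\CC}$ denotes base change along $\sigma$. Hence $X$ is weakly bounded over $k$ in $\overline X$ with respect to $\mathcal L$, and Proposition~\ref{thm:kl} applies over $k$: for every smooth projective connected curve $\overline C/k$ and every dense open $C\subseteq\overline C$, the set $\Hom_k(C,X)$ is a quasi-compact constructible subset of $\underline{\Hom}_k(\overline C,\overline X)(k)$.

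Finally I would run the dimension count. Fix a smooth integral curve $C/k$ with smooth projective model $\overline C$, a point $c\in C(k)$, and a point $x\in X(k)$. As $\overline X$ is proper, restriction to $C$ identifies $\Hom_k((C,c),(X,x))$ with the set $F$ of morphisms $\overline f\in\underline{\Hom}_k(\overline C,\overline X)(k)$ with $\overline f(C)\subseteq X$ and $\overline f(c)=x$; this $F$ is the intersection of the quasi-compact constructible set $\Hom_k(C,X)$ with the closed fibre $\ev_c^{-1}(x)$ of the evaluation morphism $\ev_c\colon\underline{\Hom}_k(\overline C,\overline X)\to\overline X$, and hence is again quasi-compact constructible. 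I would then descend $\overline C$, $C$, $c$, $x$ and $F$ to a finitely generated subfield $L$ with $k_0\subseteq L\subseteq k$, extend $\sigma_0$ to $\sigma\colon L\hookrightarrow\CC$, and use that the dimension of a quasi-compact constructible subset of a locally finite type scheme is unchanged under extension of the base field, obtaining $\dim F=\dim F_{\CC}$, where $F_{\CC}$ is the set of morphisms $\overline f\colon\overline C_{\CC}\to\overline X_{\CC}$ with $\overline f(C_{\CC})\subseteq X_{\CC}$ and $\overline f(c_{\CC})=x_{\CC}$; here $X_{\CC}=\overline{\mathcal X}_{\CC}\setminus\mathcal D_{\CC}$ is hyperbolically embedded in $\overline X_{\CC}$ because $\sigma$ extends $\sigma_0$. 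By Theorem~\ref{thm:hypemb_is_geomhyp0} the set $F_{\CC}$ is finite; being quasi-compact constructible with only finitely many $\CC$-points, it has dimension $0$. Hence $\dim F=0$, and a quasi-compact constructible subset of dimension $0$ in a scheme locally of finite type over the algebraically closed field $k$ consists of finitely many $k$-points. Therefore $\Hom_k((C,c),(X,x))$ is finite, which is exactly the assertion that $X$ is geometrically hyperbolic over $k$.

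The step that requires the most care is structural rather than deep: Theorem~\ref{thm:hypemb_is_geomhyp0} provides only the finiteness of a \emph{set of $\CC$-points}, so the descent genuinely relies on knowing that this set is cut out inside a scheme of finite type, which is precisely what Theorem~\ref{thm:pr} together with Proposition~\ref{thm:kl} supply; and one must shrink $k_0$ to a finitely generated field at the very start, so that $\sigma_0$ can be extended to each finitely generated intermediate field $L$ over which the auxiliary curve and points are defined.
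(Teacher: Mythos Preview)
Your argument is correct and follows the same underlying strategy as the paper: reduce to the case $k=\CC$ and invoke Theorem~\ref{thm:hypemb_is_geomhyp0}. The paper does this in one line by citing \cite[Lemma~2.4]{JLitt}, which packages exactly the spreading-out/descent mechanism you have written out in full---choose a finitely generated subfield over which everything is defined, extend the given embedding $k_0\hookrightarrow\CC$, and compare the constructible pointed Hom-sets over $k$ and over $\CC$ via invariance of dimension under base change. Your version is more self-contained (and makes transparent that Pacienza--Rousseau is used only to ensure the Hom-set sits inside a finite-type piece of the Hom-scheme, so that ``finite $\CC$-points'' forces dimension zero), at the cost of length; the paper's version trades this for a black-box citation. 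Note incidentally that your Step~2 (transporting weak boundedness to $k$) is not logically needed once you descend everything to $L$ in Step~4: you could equally apply Proposition~\ref{thm:kl} over $\CC$ after base change, conclude quasi-compactness and zero-dimensionality there, and then pull both back to $L$ and up to $k$.
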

\begin{proof} By \cite[Lemma~2.4]{JLitt}. we may and do assume that  $k=\CC$ .  Then, 
the statement  clearly follows from Theorem \ref{thm:hypemb_is_geomhyp0}.
\end{proof}

 \begin{lemma}\label{lem1} Assume that $k$ is uncountable.
 Let $X$ be a geometrically hyperbolic variety over $k$. Then, for every integral variety $Y$ over $k$, every $y$ in $Y(k)$, and every $x$ in $X(k)$, the set of morphisms $f:Y\to X$ with $f(y)=x$ is finite. 
 \end{lemma}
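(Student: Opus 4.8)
The plan is to reduce the general case of an integral variety $Y$ to the case of a curve, where Theorem~\ref{thm:hypemb_is_geomhyp} applies directly. Given an integral variety $Y$ over $k$, a point $y\in Y(k)$, and $x\in X(k)$, consider the scheme $\underline{\Hom}_k(Y,X)$ (locally of finite type over $k$) together with the closed condition "$f(y)=x$" cutting out a locally closed subscheme $H:=\underline{\Hom}_k((Y,y),(X,x))$; I want to show $H(k)$ is finite. Since $k$ is algebraically closed and uncountable, it suffices to show $H$ has finitely many $k$-points, and for this it is enough to show that every irreducible component of $H$ is zero-dimensional, i.e.\ that $H$ contains no curve and no positive-dimensional family.

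The key step: suppose for contradiction that $H$ is positive-dimensional at some point $f_0$, so there is a smooth integral affine curve $T$ over $k$ and a non-constant morphism $T\to H$, giving a morphism $F\colon Y\times T\to X$ such that $F(y,t)=x$ for all $t\in T(k)$ and such that the induced maps $f_t\colon Y\to X$ are pairwise distinct for $t$ in an infinite subset of $T(k)$. Now pick any point $y'\in Y(k)$, $y'\neq y$, lying on an integral curve in $Y$ through $y$ and $y'$ — concretely, choose a smooth integral affine curve $C$ over $k$ with a point $c\in C(k)$ and a morphism $\iota\colon C\to Y$ with $\iota(c)=y$ (such a $C$ exists since $Y$ is an integral variety of positive dimension through $y$; one may take a general curve section, or normalize). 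Composing, $g_t:=f_t\circ\iota\colon C\to X$ satisfies $g_t(c)=x$. By Theorem~\ref{thm:hypemb_is_geomhyp}, $X$ is geometrically hyperbolic over $k$, so the set $\{g_t : t\in T(k)\}$ is finite; hence the family $t\mapsto g_t$, which is a morphism from the integral curve $T$ to the finite-type scheme $\underline{\Hom}_k((C,c),(X,x))$ with finite image, is constant. Thus all $f_t$ agree after restriction to $\iota(C)$. Running this over a family of curves $C$ through $y$ whose images cover a dense (even all) of $Y$ — e.g.\ for each $y'\in Y(k)$ a curve through $y$ and $y'$ — we conclude $f_t=f_{t_0}$ on all of $Y(k)$, hence (as $Y$ is reduced and $X$ separated) $f_t=f_{t_0}$ as morphisms, contradicting that the $f_t$ were pairwise distinct on an infinite set.

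With positive-dimensional components ruled out, $H$ is a locally finite type $k$-scheme of dimension $0$, so its set of $k$-points is discrete; it remains only to see it is actually \emph{finite} rather than merely discrete, and this is where uncountability of $k$ enters: a locally finite type $k$-scheme of dimension $0$ has countably many $k$-points in each quasi-compact open, so if $H(k)$ were infinite it would contain a countably infinite discrete set, and by a standard argument (the "countability" trick, cf.\ the uncountability hypothesis) one produces a contradiction with geometric hyperbolicity along a single curve again — more precisely, one spreads $H$ out over a countable algebraically closed subfield $k_0\subset k$ and uses that $H(k)=\bigcup H(k_0')$ over countable extensions, each finite by the curve case applied over $k_0'$, forcing $H(k)$ itself to be finite.

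The main obstacle I anticipate is the bookkeeping in the contradiction step: ensuring that "distinct as maps $Y\to X$" genuinely propagates to "distinct after restriction to some curve $C$ through $y$" — one must choose the curves $C$ carefully (enough of them, through enough points of $Y$) so that two morphisms agreeing on all of them agree on $Y$. This uses that $Y$ is integral (irreducible and reduced), so that any two points lie on an irreducible curve inside $Y$, together with the separatedness of $X$ (equalizer of two morphisms from a reduced scheme is closed and contains a dense set, hence everything). A secondary subtlety is the role of uncountability: it is needed precisely to pass from "every curve in $H$ is constant" to "$H(k)$ is finite," since a zero-dimensional locally finite type scheme over a countable field can have infinitely many points — the standard device is to note that $\underline{\Hom}_k((Y,y),(X,x))$ together with geometric hyperbolicity forces finiteness after a base-change/spreading-out argument, exactly as in the proof that geometric hyperbolicity plus uncountability yields the naive finiteness of pointed Hom-sets.
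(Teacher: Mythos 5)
Your reduction to curves is the right instinct, but the proof as written has two genuine gaps. First, a foundational one: for a non-proper integral variety $Y$ (e.g.\ affine), the functor $\underline{\Hom}_k(Y,X)$ is in general not representable by a scheme locally of finite type --- already $\Hom(\mathbb{A}^1,\mathbb{A}^1)$ is an infinite-dimensional ind-scheme --- so the object $H$ on which your whole argument rests need not exist; the paper only ever invokes Hom-schemes for morphisms between \emph{projective} varieties. Second, and more seriously, even granting $H$, ruling out positive-dimensional families leaves untouched the case of an infinite \emph{discrete} set of morphisms, which is exactly the hard part. Zero-dimensionality of a locally finite type $k$-scheme does not imply finiteness of $k$-points (a countable disjoint union of copies of $\Spec k$ is a counterexample), and your sketch for this case (``spread out over a countable subfield, each piece finite by the curve case'') is circular: geometric hyperbolicity is a hypothesis about curves mapping to $X$ over $k$, not a finiteness statement about $\Hom_k((Y,y),(X,x))$ over countable subfields, so the ``curve case over $k_0'$'' you appeal to is essentially the statement being proved.

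The missing idea --- and the actual role of uncountability in the paper's proof --- is to separate countably many morphisms at a \emph{single point}. Given pairwise distinct $f_1,f_2,\dots\in\Hom_k((Y,y),(X,x))$, each locus $Y^{n,m}=\{p\in Y(k)\mid f_n(p)=f_m(p)\}$ with $n\neq m$ is a proper closed subset of the integral variety $Y$ (using that $X$ is separated and $Y$ is reduced and irreducible). Since $k$ is uncountable, $Y(k)$ is not covered by the countable union $\bigcup_{n\neq m}Y^{n,m}$, so there is a point $P$ at which the values $f_n(P)$ are pairwise distinct. A single smooth integral curve $C$ through $y$ and $P$ then yields pairwise distinct restrictions $f_n|_C$, all sending $y$ to $x$, contradicting geometric hyperbolicity. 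This bypasses Hom-schemes entirely and disposes of discrete infinite families and positive-dimensional families in one stroke; I recommend restructuring your argument around this point-selection step.
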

 \begin{proof} We argue by contradiction.    Let $Y$ be a $d$-dimensional integral variety over $k$, let $y\in Y(k)$, let $x\in X(k)$, and let 
 \[
 f_1, f_2, \ldots \in \Hom_{k}((Y,y),(X,x))
 \] be a sequence of pairwise distinct morphisms  from $Y$ to $X$ which send $y$ to $x$. Note that $d>1$ by our assumption that $X$ is geometrically hyperbolic over $k$. For $n,m\in \mathbb{Z}_{\geq 1}$, we define $Y^{n,m} = \{p \in Y(k) \ | \ f_n(p) = f_m(p)\}$. Note that, for every $n\neq m$, the subset $Y^{n,m}\subset Y(k)$ is a proper closed subset. In particular, by the uncountability of $k$, there is a point $P\in Y(k)$ in the complement of the union $\cup_{n\neq m} Y^{n,m}$. Let $C$ be a smooth integral curve over $k$ in $Y$ containing $y$ and $P$. Then, the morphisms $f_i|_C:C\to X$ are pairwise distinct morphisms sending $y$ to $x$. This contradicts the geometric hyperbolicity of $X$, and concludes the proof.
 \end{proof}
 
\begin{proof} [Proof of Theorem \ref{thm:urata_affine}] We may and do assume that $k$ is uncountable. Then, 
by  Theorem \ref{thm:hypemb_is_geomhyp}, the variety $X$ is geometrically hyperbolic over $k$, so that the result follows from Lemma \ref{lem1}.
\end{proof}

 \begin{proof}[Proof of Theorem \ref{thm:hyp_emb1}]
 Combine Theorem \ref{thm:geometricity_arhyp} and Theorem \ref{thm:urata_affine}.
 \end{proof}

  \section{Complements of large divisors and a result of Noguchi-Winkelmann} 
  We start with a     result  of the second-named author \cite[Section~6]{LevinAnnals}; see also    \cite{Autissier1, Autissier2, CLZ, CZ, HL}.
  We   follow the   setup  in \cite[Section~4]{LevinAnnals}, and  consider the following data fixed.
  \begin{itemize}
  \item A positive integer $m$ and a positive integer $r$;
  \item A smooth projective variety $X$ over $k$ with $\dim X>1$;
  \item An effective  divisor $D = \sum_{i=1}^r D_i$ on $X$ such that at most $m$ of the divisors $D_i$ meet in a point.
  \end{itemize}    
  
  In this situation,   the arithmetic hyperbolicity (when $k=\Qbar$) and the analytic hyperbolicity of the complement of $D$ in $X$ were verified in \cite{LevinAnnals}, under suitable ``positivity'' assumptions on $D$.
  
  \begin{theorem}[Levin]\label{thm:Levin_divs} Assume that $r\geq 2m \dim(X)$ (or $r\geq 5$ if $m=\dim X=2$) and that, for every $i$, the divisor $D_i$ is ample. Then 
the following statements hold. 
\begin{enumerate}
\item If $k=\Qbar$, then $X\setminus D$ is arithmetically hyperbolic over $\Qbar$.
\item If $k=\CC$, then the smooth affine  variety $X \setminus D $ is hyperbolically embedded in $X$.
\end{enumerate}
  \end{theorem}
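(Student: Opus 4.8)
The plan is to derive both assertions from the number-field results of the second-named author in \cite{LevinAnnals} (refined in \cite{HL}), after checking that the hypotheses line up and translating the conclusions into the language of arithmetic hyperbolicity and of hyperbolic embeddedness. Stating ampleness of the $D_i$ here is a convenience: it is stronger than the ``bigness plus positivity'' hypotheses under which the theorems of \cite{LevinAnnals} are phrased, it is preserved under restriction to subvarieties, and for every subvariety $Y\subseteq X$ still at most $m$ of the restricted divisors $D_i|_Y$ meet at a point; hence the relevant hypotheses pass to the boundary strata that enter the inductions of \cite{LevinAnnals}.

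For (1), recall that a $\ZZ$-finitely generated subring of $\Qbar$ lies in a ring of $S$-integers $\OO_{L,S}$ of some number field $L$, and that $X$ and $D$ are defined over a number field $k_0$; fixing a model of $X\setminus D$ over $\OO_{k_0,S_0}$, its $\OO_{L,S}$-points are exactly the $(S\cup S_0)$-integral points of $X\setminus D$ over $L$. So it suffices to prove finiteness of $S$-integral points on $X\setminus D$ for every number field $L$ and every finite set of places $S$ of $L$, which is what \cite[Th.~6.1A(b), Th.~6.2A(d)]{LevinAnnals} and \cite[Th.~1.4]{HL} provide under the numerical hypothesis $r\geq 2m\dim X$ (resp.\ $r\geq 5$ when $m=\dim X=2$). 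The underlying mechanism is the Corvaja--Zannier strategy \cite{CZ2, CZ3, CZ4} via Schmidt's Subspace Theorem: one fixes a large multiple of an ample line bundle and, for each place $v\in S$, builds a filtration of the corresponding space of global sections adapted to the local geometry of $D$ near the reduction of an integral point; the hypothesis that at most $m$ of the $D_i$ pass through a common point caps the number of $D_i$ that can simultaneously force a large local term, while ampleness controls the relevant intersection numbers, so that summing the local terms over $S$ violates the Subspace inequality unless the integral points lie in a proper closed subset; one then promotes non-density to finiteness by induction on $\dim X$ --- the hypotheses descending to the components of $D$ and their intersections --- with Siegel's theorem as the base case.

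For (2), one runs the complex-analytic mirror of this argument: with a Second Main Theorem for hyperplanes (Cartan, in Nochka's subgeneral-position form, applied after an embedding by sections) in place of the Subspace Theorem, one shows that every entire curve $\CC\to X$ with image not contained in $D$ is constant and, more generally, that no positive-dimensional stratum $\bigl(\bigcap_{i\in I}D_i\bigr)\setminus\bigl(\bigcup_{j\notin I}D_j\bigr)$ carries a non-constant entire curve; Green's criterion (Theorem~\ref{thm:green}) then upgrades this to the statement that $X\setminus D$ is hyperbolically embedded in $X$. All of this is carried out in \cite[Section~6]{LevinAnnals}, so in practice the argument reduces to quoting those results.

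The main obstacle is not the bookkeeping above but the \emph{sharpness} of the constant $r\geq 2m\dim X$, together with the improvement to $r\geq5$ in the borderline case $m=\dim X=2$: obtaining them requires the optimal choice of auxiliary line bundle and of the adapted filtrations and a careful use of the ``at most $m$ meet'' hypothesis. That analysis is the technical core of \cite{LevinAnnals} (and \cite{HL}), which we invoke here as a black box.
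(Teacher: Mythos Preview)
Your proposal is correct and takes essentially the same approach as the paper: both reduce the theorem to a direct citation of \cite[Theorems~6.1A, 6.2A, 6.1B, 6.2B]{LevinAnnals}, with the borderline improvement $r\geq 5$ for $m=\dim X=2$ imported from later work. The paper's entire proof is a one-line citation, whereas you additionally sketch the underlying Corvaja--Zannier/Subspace Theorem mechanism and its Nevanlinna-theoretic mirror; this is accurate and helpful exposition but not logically required. One minor attribution point: the paper credits the $r\geq 5$ improvement to Autissier \cite{Autissier1} (as recorded in \cite[p.~2]{HL}) rather than to \cite{HL} itself.
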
  
  \begin{proof}
  This is \cite[Theorem~6.1A, Theorem~6.2A, Theorem~6.1B and Theorem~6.2B]{LevinAnnals}, with a slight improvement coming from \cite{Autissier1} (see \cite[p.~2]{HL}).  
  \end{proof}
  
 We establish the following extensions of Theorem \ref{thm:Levin_divs} (with the notation as stated above Theorem \ref{thm:Levin_divs}).

 \begin{theorem}[Geometric hyperbolicity]\label{thm:geomhyp91} Assume that $r\geq 2m \dim(X)$ (or $r\geq 5$ if $m=\dim X=2$) and that, for every $i$, the divisor $D_i$ is ample. Then, for every integral variety $Y$ over $k$, every $y$ in $Y(k)$, and every $x$ in $(X\setminus D)(k)$, the set of morphisms $f:Y\to X\setminus D$ with $f(y)=x$ is finite.  In particular, the variety  $X\setminus D$ is   geometrically hyperbolic over $k$.   
 \end{theorem}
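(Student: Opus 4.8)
The plan is to reduce Theorem~\ref{thm:geomhyp91} to the already-established logarithmic Urata theorem (Theorem~\ref{thm:urata_affine}), using Theorem~\ref{thm:Levin_divs}(2) as the bridge. First I would dispose of the assumption on the base field: since geometric hyperbolicity and the stated finiteness of pointed morphisms are algebraic properties insensitive to extension of algebraically closed fields of characteristic zero (see \cite[Lemma~2.4]{JLitt} and the specialization arguments already invoked in Section~\ref{section:2}), we may and do assume $k=\CC$. It is worth noting that $X$ itself is a \emph{smooth projective} variety here, so $X\setminus D$ is a smooth affine variety; in particular $X$ is a projective model for $X\setminus D$.

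Next I would invoke Theorem~\ref{thm:Levin_divs}(2): under the hypothesis that $r\geq 2m\dim(X)$ (or $r\geq 5$ when $m=\dim X=2$) and that each $D_i$ is ample, the smooth affine variety $X\setminus D$ is hyperbolically embedded in the smooth projective variety $X$. Since $D$ is a divisor on the smooth projective variety $X$, the pair $(X, D)$ satisfies exactly the hypotheses of Theorem~\ref{thm:urata_affine} (with $\overline{X}$ there playing the role of $X$ here and the ``divisor'' being $D$). Applying Theorem~\ref{thm:urata_affine} with $Y$ the given integral variety, $y\in Y(k)$, and $x\in (X\setminus D)(k)$, we conclude that $\Hom_k((Y,y),(X\setminus D,x))$ is finite; taking $Y$ to range over smooth integral curves gives that $X\setminus D$ is geometrically hyperbolic over $k$.

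Honestly, there is essentially no obstacle: the theorem is a formal corollary of the machinery already built. The only point requiring a word of care is checking that the hypotheses of Theorem~\ref{thm:urata_affine} are literally met — namely that $X$ is smooth and projective (given) and that $D$ is a divisor (given), so that the notion of ``hyperbolically embedded in $X$'' supplied by Theorem~\ref{thm:Levin_divs}(2) is precisely the input Theorem~\ref{thm:urata_affine} consumes. Thus the proof reads:

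\begin{proof}[Proof of Theorem \ref{thm:geomhyp91}]
By \cite[Lemma~2.4]{JLitt}, we may and do assume $k=\CC$. By Theorem \ref{thm:Levin_divs}(2), the smooth affine variety $X\setminus D$ is hyperbolically embedded in the smooth projective variety $X$. Since $D$ is a divisor on $X$, the pair $(X,D)$ satisfies the hypotheses of Theorem \ref{thm:urata_affine}. Applying Theorem \ref{thm:urata_affine} to the integral variety $Y$, the point $y\in Y(k)$, and the point $x\in (X\setminus D)(k)$, we conclude that the set of morphisms $f:Y\to X\setminus D$ with $f(y)=x$ is finite. Taking $Y$ to be an arbitrary smooth integral curve over $k$ shows that $X\setminus D$ is geometrically hyperbolic over $k$.
\end{proof}
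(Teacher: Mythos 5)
Your proof is correct and follows essentially the same route as the paper's own argument: reduce to $k=\CC$ via \cite[Lemma~2.4]{JLitt}, invoke Theorem~\ref{thm:Levin_divs}(2) to obtain the hyperbolic embedding of $X\setminus D$ in $X$, and then apply the logarithmic Urata theorem (Theorem~\ref{thm:urata_affine}).
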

 \begin{proof} To prove the geometric hyperbolicity of $X\setminus D$, by \cite[Lemma~2.4]{JLitt}, we may and do assume that $k$ is the field of complex numbers.  Then, by the second part of Theorem \ref{thm:Levin_divs}, the (complex algebraic) variety $X\setminus D$ is hyperbolically embedded in $X$. Thus, the required finiteness statement follows from the logarithmic version of Urata's theorem (Theorem \ref{thm:urata_affine}).
 \end{proof}
 
Now using this result and the first part of Theorem \ref{thm:Levin_divs} combined with Theorem \ref{thm:geometricity_arhyp} (or alternatively,  simply applying Theorem \ref{thm:hyp_emb1} with Theorem \ref{thm:Levin_divs}) we find:

 \begin{theorem}[Arithmetic hyperbolicity] Assume that $r\geq 2m \dim(X)$ (or $r\geq 5$ if $m=\dim X=2$) and that, for every $i$, the divisor $D_i$ is ample. If $k=\Qbar$ and $k\subset L$ is an extension of algebraically closed fields, then  $X_L\setminus D_L$ is   arithmetically hyperbolic over $L$.
 \end{theorem}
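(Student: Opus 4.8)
The plan is simply to assemble the pieces already in hand. First I would invoke Theorem~\ref{thm:Levin_divs}(1): since $k=\Qbar$ and each $D_i$ is ample with $r\geq 2m\dim(X)$ (or $r\geq 5$ when $m=\dim X=2$), the smooth affine variety $X\setminus D$ is arithmetically hyperbolic over $\Qbar$. This is the genuinely arithmetic input, established in \cite{LevinAnnals} via Schmidt's Subspace Theorem, and I would take it as a black box.

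Next I need a complex-analytic input, which then feeds into the geometric hyperbolicity machinery. Fix an embedding $\Qbar\to\CC$. By Theorem~\ref{thm:Levin_divs}(2), the base change $(X\setminus D)_{\CC}$ is hyperbolically embedded in $X_{\CC}$. Hence, by our logarithmic version of Urata's theorem (Theorem~\ref{thm:urata_affine}, or directly via Theorem~\ref{thm:hypemb_is_geomhyp}), the variety $(X\setminus D)_{\CC}$ is geometrically hyperbolic over $\CC$; this is precisely what Theorem~\ref{thm:geomhyp91} records. The real work sitting behind this step is the Pacienza--Rousseau weak-boundedness theorem together with Kobayashi's compactness result, both already available to us.

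Finally I would feed both facts into Theorem~\ref{thm:geometricity_arhyp} with the subfield $\Qbar\subset\CC$: a variety over $\Qbar$ that is arithmetically hyperbolic over $\Qbar$ and whose base change to $\CC$ is geometrically hyperbolic over $\CC$ is \emph{absolutely} arithmetically hyperbolic. Thus $X\setminus D$ is absolutely arithmetically hyperbolic, which by definition means that $(X\setminus D)_L = X_L\setminus D_L$ is arithmetically hyperbolic over $L$ for every algebraically closed extension $L\supset\Qbar$, as claimed. Equivalently, one can collapse the middle steps by applying Theorem~\ref{thm:hyp_emb1} directly: Theorem~\ref{thm:Levin_divs}(2) shows that $X\setminus D$ is hyperbolically embedded in $X$ in the sense defined just before Theorem~\ref{thm:hyp_emb1} (using the chosen embedding $\Qbar\to\CC$), while Theorem~\ref{thm:Levin_divs}(1) supplies arithmetic hyperbolicity over $\Qbar$, so Theorem~\ref{thm:hyp_emb1} gives the conclusion at once.

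Since all of the substantive content — the Subspace-Theorem input, the Pacienza--Rousseau bound, Kobayashi's compactness, and the descent argument of Theorem~\ref{thm:geometricity_arhyp} — is already in place, there is no real obstacle in this final step. The only point requiring a moment's care is that ``hyperbolically embedded'' for the $\Qbar$-variety $X\setminus D$ is defined relative to a choice of embedding $\Qbar\to\CC$; but as arithmetic hyperbolicity is a purely algebraic property, this choice is immaterial, consistently with the conventions fixed in the introduction.
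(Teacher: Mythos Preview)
Your argument is correct and matches the paper's own proof essentially verbatim: combine Theorem~\ref{thm:Levin_divs}(1) with the geometric hyperbolicity established in Theorem~\ref{thm:geomhyp91} (via Theorem~\ref{thm:Levin_divs}(2) and the logarithmic Urata theorem) and conclude by Theorem~\ref{thm:geometricity_arhyp}, or equivalently apply Theorem~\ref{thm:hyp_emb1} directly to Theorem~\ref{thm:Levin_divs}.
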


The proof of the extension (Theorem \ref{thm:NW}) of Noguchi-Winkelmann's results follows in the same manner from:

\begin{theorem}[Noguchi-Winkelmann {\cite[Th.~7.3.4, Th.~9.7.6]{NWBook}} (see also \cite{NW})]  Let $X$ be a smooth projective connected variety over $k$, and let $D=\sum_{i=1}^rD_i$ be a sum of $r$ ample effective divisors on $X$ in general position.  Let $\rank \{D_i\}_{i=1}^r$ denote the (free) rank of the subgroup of $\NS(X)$ generated by the images of $D_1\ldots, D_r$. Suppose that $r\geq 2\dim X+\rank \{D_i\}_{i=1}^r$. Then 
the following statements hold. 
\begin{enumerate}
\item If $k=\Qbar$, then $X\setminus D$ is arithmetically hyperbolic over $\Qbar$.
\item If $k=\CC$, the smooth affine  variety $X \setminus D $ is hyperbolically embedded in $X$.
\end{enumerate}
  \end{theorem}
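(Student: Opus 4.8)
The plan is to obtain both assertions directly from the work of Noguchi--Winkelmann \cite{NWBook} --- the analytic statement (2) from their hyperbolicity results and the arithmetic statement (1) from their Diophantine results --- so that essentially no new argument is required beyond reconciling hypotheses and a routine spreading-out step for (1). Concretely, part (2) should come from \cite[Th.~7.3.4]{NWBook} and part (1) from \cite[Th.~9.7.6]{NWBook}, exactly as indicated in the attribution.

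For part (2), I would invoke the statement that, for a smooth projective $X$ and ample effective divisors $D_1,\dots,D_r$ in general position, $X\setminus D$ is complete hyperbolic and hyperbolically embedded in $X$ once $r$ exceeds an explicit bound in terms of $\dim X$ and the rank of the subgroup of $\NS(X)$ generated by the classes of the $D_i$. The only thing to verify is that Noguchi--Winkelmann's notion of ``general position'' coincides with the one used here and that their numerical threshold is met under $r\geq 2\dim X+\rank\{D_i\}_{i=1}^r$ --- in particular tracking whether the inequality in \emph{loc.\ cit.} is strict and whether the N\'eron--Severi invariant that appears is the full Picard number $\rho(X)$ or the possibly smaller rank $\rank\{D_i\}_{i=1}^r$. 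As a normalization check, for $X=\mathbb{P}^n$ every $D_i$ is linearly equivalent, so $\rank\{D_i\}=1$ and the bound reads $r\geq 2n+1$, recovering the classical theorem of Bloch--Cartan--Dufresnoy (and Green, Fujimoto) on complements of hyperplanes already recalled after Corollary~\ref{cor:symprods_are_hypemb}.

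For part (1), I would use \cite[Th.~9.7.6]{NWBook}, proved there via the logarithmic Second Main Theorem / Schmidt's Subspace Theorem together with the same divisor combinatorics, which yields finiteness of the set of $S$-integral points on (a model of) $X\setminus D$ over any number field. To deduce arithmetic hyperbolicity over $\Qbar$ in the sense of the Conventions, I would fix a model $\mathcal{X}\setminus\mathcal{D}$ of $X\setminus D$ over a $\ZZ$-finitely generated subring $A\subset\Qbar$; since ampleness, general position and the rank of $\{D_i\}$ in $\NS$ are geometric conditions, stable under base change, after enlarging $A$ we may assume $\mathcal{X}\to\Spec A$ is smooth projective, the $\mathcal{D}_i$ are ample over $A$, fibrewise in general position, and span a subgroup of $\NS$ of rank $\rank\{D_i\}_{i=1}^r$. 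Now any $\ZZ$-finitely generated $A'\subset\Qbar$ with $A\subseteq A'$ has fraction field a number field $L$ and sits inside $\mathcal{O}_{L,S}$ for a suitable finite set of places $S$; hence $(\mathcal{X}\setminus\mathcal{D})(A')\subseteq(\mathcal{X}\setminus\mathcal{D})(\mathcal{O}_{L,S})$, and the right-hand side is finite by \cite[Th.~9.7.6]{NWBook} applied over $L$ to the model $\mathcal{X}\otimes_A\mathcal{O}_{L,S}$ (whose base change to $\Qbar$ recovers $X\setminus D$, so the hypotheses hold). Thus $\mathcal{X}(A')$ is finite for every such $A'$, i.e.\ $X\setminus D$ is arithmetically hyperbolic over $\Qbar$; this is the same spreading-out mechanism already implicit in Theorem~\ref{thm:Levin_divs}(1).

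The main obstacle is bookkeeping rather than mathematics: faithfully matching Noguchi--Winkelmann's precise hypotheses --- the exact meaning of ``general position'', the exact numerical inequality, and the exact N\'eron--Severi invariant appearing in their statements --- to the streamlined form $r\geq 2\dim X+\rank\{D_i\}_{i=1}^r$ recorded here. Once this translation is carried out, parts (1) and (2) are immediate, and combining them with Theorem~\ref{thm:hyp_emb1} gives the absolute arithmetic hyperbolicity asserted in Theorem~\ref{thm:NW}.
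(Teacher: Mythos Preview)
Your proposal is correct and matches the paper's approach: the paper gives no proof of this theorem at all, simply citing \cite[Th.~7.3.4, Th.~9.7.6]{NWBook} in the attribution, and your plan is exactly to extract the two assertions from those references (with the routine spreading-out step to pass from finiteness of $S$-integral points to arithmetic hyperbolicity over $\Qbar$ in the paper's sense). The only content you add beyond the paper is the explicit bookkeeping about matching hypotheses and the sanity check against the classical $\mathbb{P}^n$ case, which is entirely reasonable but not strictly required.
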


\section{Hyperbolic embeddings of symmetric products}\label{sec:hypemb}
    In this section we prove that symmetric products of smooth affine curves are hyperbolically embedded in the symmetric product of their smooth projective model, under suitable assumptions; see Corollaries \ref{cor:symprods_are_hypemb} and \ref{cor:hypemb}. 
      To prove these results, we will use the following theorem of Green.  
    \begin{theorem}[Green]\label{thm:green}
  Let $Z$ be a smooth projective variety and let $D$ be the union of Cartier divisors $D_1, \dots, D_m$. Then $Y = Z \setminus D$ is hyperbolically embedded in $Z$ if the following two conditions are satisfied:
    \begin{enumerate}
  \item $Y$ is Brody hyperbolic;
  \item for any partition of indices $I \cup J = \{ 1, \dots, m \}$, the variety $\cap_{i \in I} D_i \setminus \cup_{j \in J} D_j$ is Brody hyperbolic. 
  \end{enumerate}
    \end{theorem}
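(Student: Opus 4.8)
To prove Theorem~\ref{thm:green}, the plan is to argue by contradiction, in the familiar spirit of Brody's lemma: assuming $Y = Z \setminus D$ is \emph{not} hyperbolically embedded in $Z$, I would manufacture a nonconstant holomorphic map from $\CC$ into one of the locally closed strata
\[
S_{I,J} \ :=\ \Big( \bigcap_{i\in I} D_i \Big)\setminus \bigcup_{j\in J} D_j, \qquad I\cup J=\{1,\dots,m\},
\]
(with the convention $\bigcap_{i\in\emptyset} D_i = Z$, so that $S_{\emptyset,\{1,\dots,m\}}=Y$), which is forbidden by the hypotheses. Throughout one works with $Z^{\an}$, which is compact because $Z$ is projective; I would fix a Hermitian metric $h$ on $Z^{\an}$ with associated distance $d_h$.

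\textbf{Step 1 (extracting an entire curve).} The failure of hyperbolic embeddedness in the compact $Z^{\an}$ means, by the standard characterization (see \cite[Ch.~3]{KobayashiBook}), that there are distinct points $p,q\in\overline Y$ and points $p_n,q_n\in Y$ with $p_n\to p$, $q_n\to q$ and $d_Y(p_n,q_n)\to 0$, where $d_Y$ is the Kobayashi pseudodistance. I would first observe that no uniform bound of the form $(1-|z|^2)\|\phi'(z)\|_h\le C$ can hold for all holomorphic discs $\phi:\Delta\to Y$: such a bound would force $d_Y\ge C^{-1}d_h$ on $Y$ by the distance-decreasing property, contradicting $d_Y(p_n,q_n)\to 0$ together with $d_h(p_n,q_n)\to d_h(p,q)>0$. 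Taking $C=n$, this produces holomorphic discs $f_n:\Delta\to Y$ with $\sup_{z\in\Delta}(1-|z|^2)\|f_n'(z)\|_h\to\infty$, to which Brody's reparametrization lemma applies (again \cite[Ch.~3]{KobayashiBook}): it yields holomorphic maps $g_n:\Delta_{r_n}\to Y$, with $r_n\to\infty$, obtained from the $f_n$ by reparametrization, a subsequence of which converges locally uniformly on $\CC$ to a holomorphic map $f:\CC\to Z^{\an}$ with $\|f'(0)\|_h=1$, hence nonconstant. The point to stress is that only the compactness of $Z^{\an}$ is used here, not any property of $D$: the $f_n$ may well run off toward the boundary $D$, but the limit $f$ is nevertheless a nonconstant entire curve in $Z^{\an}$ realized as a local-uniform limit of maps into $Y$.

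\textbf{Step 2 (confining the curve to a single stratum).} Here the assumption that each $D_i$ is a Cartier divisor enters. Fixing $i$, I would examine the set $A_i:=\{z\in\CC : f(z)\in D_i\}$, which is closed; to see it is also open, take $z_0\in A_i$ and a coordinate ball $B\ni f(z_0)$ on which $D_i$ is cut out by a single holomorphic equation $\sigma_i$. For $n$ large, $g_n$ maps a small disc $\delta\ni z_0$ into $B$, so $\sigma_i\circ g_n$ is holomorphic and nowhere vanishing on $\delta$ (since $g_n$ avoids $D\supseteq D_i$) and converges locally uniformly to $\sigma_i\circ f$; as $(\sigma_i\circ f)(z_0)=0$, Hurwitz's theorem forces $\sigma_i\circ f\equiv 0$ on $\delta$, so $\delta\subseteq A_i$. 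Connectedness of $\CC$ then gives $A_i=\emptyset$ or $A_i=\CC$. Setting $I:=\{i : f(\CC)\subseteq D_i\}$ and $J:=\{1,\dots,m\}\setminus I$, one obtains a nonconstant holomorphic map $f:\CC\to S_{I,J}$, contradicting hypothesis (2) for the partition $I\cup J$ (which is exactly hypothesis (1) when $I=\emptyset$, as $S_{\emptyset,\{1,\dots,m\}}=Y$). This would complete the proof that $Y$ is hyperbolically embedded in $Z$.

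\textbf{Expected main obstacle.} I expect the entire difficulty to lie in Step~1: turning the \emph{quantitative} collapse of the Kobayashi pseudodistance of $Y$ into an honest nonconstant map $\CC\to Z^{\an}$, via the Brody-type reparametrization adapted to the hyperbolically \emph{embedded} setting (rather than to bare hyperbolicity), while keeping track that the limit is a limit of maps into $Y$ so that Step~2 can be run. By contrast, Step~2 is a short Hurwitz-propagation argument, the hypotheses are used only at the very last line, and the possibly singular or reducible nature of the strata $S_{I,J}$ is immaterial, since Brody hyperbolicity concerns only the reduced associated analytic space. This is in essence Green's original argument \cite{GreenHyp}; compare also the treatment in \cite[Ch.~3]{KobayashiBook}.
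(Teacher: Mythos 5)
The paper does not prove this statement itself but simply cites \cite[Theorem 3.6.13]{KobayashiBook} and \cite{GreenHyp}, and your argument is a correct reconstruction of exactly that cited proof: Brody reparametrization inside the compact $Z^{\an}$ to extract a nonconstant entire curve as a limit of maps into $Y$, followed by the Hurwitz open-closed argument using local defining equations of the Cartier divisors to confine the curve to a single stratum. Both steps are sound as written, so your proposal matches the approach the paper relies on.
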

  
\begin{proof}
See \cite[Theorem 3.6.13]{KobayashiBook} (or the original \cite{GreenHyp}). 
\end{proof}

We will also be interested in   showing that certain symmetric products of curves are not hyperbolically embedded in their canonical model. To do so, we will use the following (partial) converse to Green's theorem due to Noguchi and Winkelmann \cite[Th.~7.2.13]{NWBook} (see also Zaidernberg's partial converse in \cite[Theorem 3.6.18]{KobayashiBook} and \cite{Zaidenberg1, Zaidenberg2}). (The reason we say ``partial converse'' is because of the additional general position assumption.)
 
  \begin{theorem}\label{thm:greenconverse}
  Let $Z$ be a smooth projective variety and let $D$ be the union of Cartier divisors $D_1, \dots, D_m$ in general position on $Z$. If $Y = Z \setminus D$ is hyperbolically embedded in $Z$, then
  \begin{enumerate}
  \item $Y$ is Brody hyperbolic;
  \item for any partition of indices $I \cup J = \{ 1, \dots, m \}$, the variety $\cap_{i \in I} D_i \setminus \cup_{j \in J} D_j$ is Brody hyperbolic. 
  \end{enumerate}
 \end{theorem}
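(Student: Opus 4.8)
The plan is as follows. Part (1) is immediate: a variety hyperbolically embedded in a projective variety is in particular Kobayashi hyperbolic, hence Brody hyperbolic (cf. \eqref{hyps}). So the content is in part (2). Fix a partition $I\sqcup J=\{1,\dots,m\}$ and set $V:=\bigcap_{i\in I}D_i$ (with the convention $V=Z$ if $I=\emptyset$) and $W:=V\setminus\bigcup_{j\in J}D_j$. By the general position hypothesis, $V$ is either empty (and there is nothing to prove) or a smooth subvariety of $Z$ of codimension $|I|$ along which the divisors $D_i$, $i\in I$, meet transversally. Assume for contradiction that $W$ is not Brody hyperbolic, i.e. that there is a non-constant holomorphic map $h\colon\CC\to W$; after precomposing with a translation of $\CC$ we may assume $h'(0)\neq 0$. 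I would then exploit the following quantitative form of hyperbolic embeddedness, one of Kobayashi's equivalent ``length function'' characterizations: there exist a Hermitian metric $\omega$ on $Z$ and a constant $c>0$ such that $c\,\lVert v\rVert_\omega\le F_Y(v)$ for all tangent vectors $v$ to $Y$, where $F_Y$ denotes the infinitesimal Kobayashi metric of $Y$. Combining this with the distance-decreasing property of $F$ and the identity $F_{\Delta_R}(0;\partial_t)=1/R$ for the disk $\Delta_R:=\{t\in\CC:\lvert t\rvert<R\}$ gives, for every holomorphic map $\phi\colon\Delta_R\to Y$,
\[
\lVert\phi'(0)\rVert_\omega\ \le\ \tfrac{1}{cR}.
\]

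The strategy is a rescaling argument. For each large $R$ I would produce a holomorphic map $\widetilde h_R\colon\Delta_{R'}\to Y$, with $R'\ge R/2$, obtained by a small perturbation of $h|_{\Delta_R}$, arranged so that $\lVert\widetilde h_R'(0)\rVert_\omega\ge\tfrac12\lVert h'(0)\rVert_\omega=:c_0/2>0$, a bound \emph{independent of $R$}. Applying the displayed inequality with radius $R'$ then yields $c_0/2\le 1/(cR')\le 2/(cR)$ for all $R$, which is absurd once $R>4/(cc_0)$. Conceptually, a fixed non-constant entire curve in $W\subset D$, perturbed slightly, spreads holomorphic disks of arbitrarily large radius into $Y$ whose derivative at the center stays bounded away from $0$ — exactly what hyperbolic embeddedness of $Y$ forbids near the boundary $D$. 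This mirrors the role of entire curves in Green's criterion (Theorem \ref{thm:green}), which is why one should expect such a converse.

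Everything thus reduces to the perturbation step, which I expect to be the main obstacle to carry out carefully, and where the general position and Cartier hypotheses are used. Since $\Delta_R$ is Stein and contractible, the pullback along $g:=h|_{\Delta_R}\colon\Delta_R\to V$ of the normal bundle $N_{V/Z}$ is trivial, and by transversality of the $D_i$ ($i\in I$) along $V$ it splits canonically as $\bigoplus_{i\in I}\bigl(\mathcal{O}_Z(D_i)|_V\bigr)$, the $i$-th summand being the normal direction to $D_i$. Let $s_i\in\HH^0(Z,\mathcal{O}_Z(D_i))$ be a section with divisor $D_i$. One builds a holomorphic ``collar along the curve'', namely a holomorphic map $\Psi\colon\Delta_{R'}\times\Delta_\rho^{|I|}\to Z$ for suitable $R'<R$ and $\rho>0$ (both allowed to depend on $R$) with $\Psi(\zeta,0)=g(\zeta)$ and $s_i\circ\Psi(\zeta,w)=w_i\cdot u_i(\zeta,w)$ for nowhere-vanishing holomorphic $u_i$; the local existence is the normal form for a family of transverse smooth divisors, and globalizing over the one-dimensional Stein set $\Delta_{R'}$ is a routine inductive patching. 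Choosing small nonzero constants $\varepsilon_i$ ($i\in I$) and a holomorphic frame $e_i$ of $g^{*}(\mathcal{O}_Z(D_i)|_V)$, set $w(\zeta):=\sum_{i\in I}\varepsilon_i e_i(\zeta)$, which has every component nowhere vanishing; then $\widetilde h_R:=\Psi(\cdot\,,w(\cdot))\colon\Delta_{R'}\to Z$ satisfies $s_i\circ\widetilde h_R\neq 0$ for $i\in I$, i.e. $\widetilde h_R(\Delta_{R'})\cap D_i=\emptyset$. Finally, if $\max_i|\varepsilon_i|$ is small enough, then (using uniform continuity of $\Psi$ on the compact set $\overline{\Delta_{R'}}\times\overline{\Delta_\rho^{|I|}}$): (i) $\widetilde h_R$ stays within $\omega$-distance less than the positive distance from the compact set $h(\overline{\Delta_{R'}})$ to $\bigcup_{j\in J}D_j$, so $\widetilde h_R$ also avoids $\bigcup_{j\in J}D_j$, whence $\widetilde h_R(\Delta_{R'})\subset Y$; and (ii) $\lVert\widetilde h_R'(0)\rVert_\omega\ge c_0/2$. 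This produces the maps needed above and completes the contradiction. (The degenerate cases are trivial: if $I=\emptyset$ then $W=Y$ and (2) reduces to (1), and if $V=\emptyset$ there is nothing to prove.)
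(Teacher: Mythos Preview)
The paper does not supply its own proof of this statement; it is quoted as a theorem of Noguchi--Winkelmann \cite[Th.~7.2.13]{NWBook}, with a closely related version due to Zaidenberg \cite[Theorem~3.6.18]{KobayashiBook}. Your argument follows the standard route taken in those sources: use the length-function characterization of hyperbolic embeddedness ($F_Y\ge c\,\omega$ on $TY$ for some Hermitian $\omega$ on $Z$), assume an entire curve $h:\CC\to W$ lies in a boundary stratum, and perturb $h|_{\Delta_R}$ holomorphically off $\bigcap_{i\in I}D_i$ in the normal directions supplied by the general-position (normal-crossings) hypothesis to obtain disks in $Y$ of radius $\asymp R$ whose derivative at the center is uniformly bounded below, contradicting the bound as $R\to\infty$.

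The only substantive step is the construction of the collar $\Psi$, and here your phrase ``routine inductive patching'' understates what is required. There is no holomorphic tubular neighborhood of $V$ in $Z$ in general, and naively patching local collars over a cover of $g(\overline{\Delta_{R'}})$ runs into a \emph{non-abelian} cocycle in the group of germs of biholomorphisms of $(\CC^{|I|},0)$. One genuinely needs that the base $\Delta_{R'}$ is Stein: for instance, lift a global frame of $g^*N_{V/Z}\cong\bigoplus_{i\in I}g^*(\mathcal{O}_Z(D_i)|_V)$ to holomorphic sections of $g^*TZ$ (the short exact sequence splits over a Stein base), and then solve the deformation problem for $\Psi$ order by order in $w$, the obstructions at each step lying in $H^1(\Delta_{R'},g^*TZ)=0$; convergence on a slightly smaller $\Delta_{R'}$ then gives the collar. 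This is essentially the construction carried out in the cited references, so your outline is correct even if this step deserves more than one sentence. Note, incidentally, that the paper itself later invokes the sequence form of exactly this mechanism (points $Q_i\to Q$, $R_i\to R$ in $Y$ with $d_Y(Q_i,R_i)\to 0$) in the proof of Theorem~\ref{thm:final_hypemb}, again deferring to \cite[Th.~7.2.13]{NWBook}.
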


We start with using Green's theorem and its partial converse to prove the following result. 

\begin{theorem}
\label{symhyp}
Let $X$ be a smooth affine connected curve over $\mathbb{C}$ and let $\Xbar$ be its smooth projective compactification.  Let $d$ be a positive integer.  Then $\Sym^d_X$ is hyperbolically embedded in $\Sym^d_{\Xbar}$ if and only if for every subset $T\subset \Xbar\setminus X$ with $0\leq |T|<d$, the variety $\Sym^{d-|T|}_{X\cup T}$ is Brody hyperbolic. 
\end{theorem}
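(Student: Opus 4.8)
The plan is to apply Green's criterion (Theorem~\ref{thm:green}) and its partial converse (Theorem~\ref{thm:greenconverse}) to the variety $Z = \Sym^d_{\Xbar}$ with the specific boundary divisor coming from $\Xbar \setminus X$. First I would set up notation: write $\Xbar \setminus X = \{Q_1, \dots, Q_r\}$, and observe that $\Sym^d_{\Xbar} \setminus \Sym^d_X$ is the union of the $r$ divisors $D_i := \{ E \in \Sym^d_{\Xbar} : Q_i \in \Supp(E)\}$, i.e.\ $D_i$ consists of effective degree-$d$ divisors containing $Q_i$ in their support. Each $D_i$ is visibly a Cartier divisor on the smooth variety $\Sym^d_{\Xbar}$ (it is the image of $\{Q_i\} \times \Sym^{d-1}_{\Xbar} \hookrightarrow \Sym^d_{\Xbar}$), and $\Sym^d_X = \Sym^d_{\Xbar} \setminus \bigcup_i D_i$. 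The key geometric observation, which I would isolate as a lemma, is the identification of the strata: for $I \cup J = \{1, \dots, r\}$, writing $T := \{Q_i : i \in I\}$, one has
\[
\bigcap_{i \in I} D_i \;\setminus\; \bigcup_{j \in J} D_j \;\cong\; \Sym^{d - |T|}_{X \cup T}
\]
when $|T| \le d$, and this stratum is empty when $|T| > d$. Indeed a point of $\bigcap_{i\in I} D_i$ is an effective degree-$d$ divisor on $\Xbar$ whose support contains all of $T$; removing the locus where it also contains some $Q_j$ with $j \in J$ forces the "free" part of the divisor (after subtracting $\sum_{Q \in T} Q$, which uses $|T| \le d$) to be an effective degree-$(d-|T|)$ divisor supported on $\Xbar \setminus (T \cup \{Q_j : j\in J\}) = X \cup T$, giving the claimed isomorphism with $\Sym^{d-|T|}_{X\cup T}$. (The case $I = \emptyset$ recovers the condition that $Y = \Sym^d_X$ itself is Brody hyperbolic, which corresponds to $T = \emptyset$.)

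**Proof of the two directions.** Granting the stratum identification, the theorem follows by matching conditions. For the "if" direction, suppose every $\Sym^{d-|T|}_{X\cup T}$ with $0 \le |T| < d$ is Brody hyperbolic. I must verify the two hypotheses of Theorem~\ref{thm:green}. Condition (1), that $\Sym^d_X$ is Brody hyperbolic, is the $T = \emptyset$ case of the hypothesis. For condition (2), take any partition $I \cup J$ and set $T = \{Q_i : i \in I\}$. If $|T| < d$ the stratum is $\Sym^{d-|T|}_{X \cup T}$, Brody hyperbolic by hypothesis. If $|T| > d$ the stratum is empty, hence vacuously Brody hyperbolic. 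The remaining case $|T| = d$: here the stratum $\bigcap_{i\in I} D_i \setminus \bigcup_{j \in J} D_j$ is the set of degree-$d$ divisors equal to exactly $\sum_{Q\in T} Q$ (with no point of $J$), which is either a single point or empty — in either case a zero-dimensional, hence Brody hyperbolic, variety. So all conditions of Green's theorem hold and $\Sym^d_X$ is hyperbolically embedded in $\Sym^d_{\Xbar}$. For the "only if" direction, I would first need the divisors $D_1, \dots, D_r$ to be in general position so that Theorem~\ref{thm:greenconverse} applies; this should follow from the fact that the $Q_i$ are distinct points of the curve (a degree-$d$ divisor can contain at most $d$ of them, and the intersection $\bigcap_{i \in I}D_i$ has the expected codimension $|I|$ precisely when $|I|\le d$, which is exactly the general-position condition in this stratified sense — I would spell this out carefully). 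Then Theorem~\ref{thm:greenconverse} gives that each stratum $\bigcap_{i\in I}D_i \setminus \bigcup_{j\in J}D_j$ is Brody hyperbolic; taking $I$ with $|I| = |T| < d$ and $J$ its complement yields that $\Sym^{d-|T|}_{X\cup T}$ is Brody hyperbolic, which is the desired conclusion.

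**Main obstacle.** The genuinely delicate point is the general-position hypothesis in the "only if" direction: Theorem~\ref{thm:greenconverse} is only a \emph{partial} converse and requires the $D_i$ to be in general position on $Z = \Sym^d_{\Xbar}$. One must check that this holds for the divisors $D_i = \{Q_i \in \Supp\}$, and the subtlety is that once more than $d$ of them are selected the intersection is empty rather than of the "expected" positive codimension; I would adopt the convention (consistent with Noguchi–Winkelmann) under which an empty intersection counts as being in general position, or alternatively restrict attention to subsets $I$ with $|I| \le d$ in the statement of general position, and verify that for $|I| \le d$ the intersection $\bigcap_{i\in I} D_i \cong \Sym^{d-|I|}_{\Xbar}$ has pure codimension $|I|$, which is the required transversality. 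A secondary, more routine, obstacle is making the scheme-theoretic identification $\bigcap_{i\in I} D_i \cong \Sym^{d-|T|}_{\Xbar}$ and the open-stratum identification precise — using the standard description of $\Sym^d$ of a smooth curve as a smooth variety with the "add a fixed point" maps $\Sym^{d-1}_{\Xbar} \to \Sym^d_{\Xbar}$ being closed immersions — but this is a clean, well-known computation on symmetric products of curves.
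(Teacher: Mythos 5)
Your proposal is correct and follows essentially the same route as the paper: the same divisors $D_i$ (images of $\{Q_i\}\times\Sym^{d-1}_{\Xbar}$), the same identification of the strata $\bigcap_{i\in I}D_i\setminus\bigcup_{j\in J}D_j\cong\Sym^{d-|I|}_{X\cup T}$, and the same appeal to Green's theorem and its partial converse, with the paper settling the general-position point by observing that $D$ is a normal crossings divisor and handling $|I|=d$ by noting the intersection is a single point, exactly as you do.
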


\begin{proof}
Let $\Xbar\setminus X=\{P_1,\ldots, P_r\}$.  Let $\psi:\Xbar^d\to \Sym^d_{\Xbar}$ be the natural map and let $\pi:\Xbar^d\to \Xbar$ be one of the natural projections.  Let $D_j=\psi_*\pi^*P_j$, $j=1,\ldots, r$.  Then $D=\sum_{j=1}^rD_j$ is a normal crossings divisor on $\Sym^d_{\Xbar}$  and $\Sym^d_X=\Sym^d_{\Xbar}\setminus D$.  In particular, the divisors $D_1,\ldots, D_r$ are in general position.

 Let $\emptyset \subset I\subset\{1,\ldots, r\}$ and let $J=\{1,\ldots, r\}\setminus I$.  Then $\cap_{i\in I}D_i\cong \Sym^{d-|I|}_{\Xbar}$ if $|I|\leq d$ and $\cap_{i\in I}D_i=\emptyset$ if $|I|>d$.  In the first case, if $|I|\leq d$, then 

\begin{align*}
\bigcap_{i\in I}D_i\setminus \bigcup_{j\in J}D_j\cong \Sym^{d-|I|}_{\Xbar\setminus \cup_{j\in J}\{P_j\}}=\Sym^{d-|I|}_{\cup_{i\in I}\{P_i\}\cup X}.
\end{align*}

Now the result follows from Green's theorem (Theorem \ref{thm:green}) and its (partial) converse (Theorem \ref{thm:greenconverse}), noting that when $|I|=d$, $\bigcap_{i\in I}D_i$ is a point.
\end{proof}

As we will show later, Brody hyperbolicity and being hyperbolically embeddable are not equivalent notions in general. However, it seems reasonable to suspect that in the case of smooth affine surfaces these two notions do in fact coincide.  The following corollary is in accordance with this expectation.

\begin{corollary}
\label{sym2}
Let $X$ be a smooth affine connected curve over $\mathbb{C}$ and let $\Xbar$ be its smooth projective compactification.  Then $\Sym^2_X$ is hyperbolically embedded in $\Sym^2_{\Xbar}$ if and only if $\Sym^2_X$ is Brody hyperbolic.
\end{corollary}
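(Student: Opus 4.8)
The plan is to read the statement off from Theorem~\ref{symhyp} specialized to $d=2$, together with the classical classification of non-hyperbolic smooth curves. One implication is immediate: a variety that is hyperbolically embedded in a projective variety is in particular Brody hyperbolic, so if $\Sym^2_X$ is hyperbolically embedded in $\Sym^2_{\Xbar}$ then $\Sym^2_X$ is Brody hyperbolic. For the converse, assume $\Sym^2_X$ is Brody hyperbolic. In Theorem~\ref{symhyp} with $d=2$ the subsets $T\subset\Xbar\setminus X$ with $0\le|T|<2$ are $T=\emptyset$, which gives back $\Sym^2_X$ itself, and $T=\{P\}$ for $P\in\Xbar\setminus X$, which gives the curve $\Sym^{1}_{X\cup\{P\}}=X\cup\{P\}$. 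Thus the corollary reduces to the implication: \emph{if $\Sym^2_X$ is Brody hyperbolic, then $X\cup\{P\}$ is Brody hyperbolic for every $P\in\Xbar\setminus X$.}

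I would prove this by contraposition. Write $g$ for the genus of $\Xbar$ and $n=\#(\Xbar\setminus X)\ge 1$. A smooth quasi-projective curve is Brody hyperbolic unless it is $\PP^1$, $\mathbb{A}^1$, $\Gm$, or an elliptic curve; equivalently, $X\cup\{P\}$ fails to be Brody hyperbolic exactly when $2g-2+(n-1)\le 0$, i.e.\ $2g+n\le 3$. Since $n\ge 1$ this forces $g\le 1$, and leaves three cases: (i) $g=0$ and $n\in\{1,2\}$, i.e.\ $X\cong\mathbb{A}^1$ or $X\cong\Gm$; (ii) $g=0$ and $n=3$, i.e.\ $X=\PP^1\setminus\{P_1,P_2,P_3\}$; and (iii) $g=1$ and $n=1$, i.e.\ $\Xbar=E$ is an elliptic curve and $X=E\setminus\{P\}$. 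In each case the goal is to exhibit a nonconstant holomorphic map $\CC\to\Sym^2_X$, contradicting the Brody hyperbolicity of $\Sym^2_X$.

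Case (i) is immediate: $X$ is itself not Brody hyperbolic, hence carries a nonconstant entire curve, and composing with the morphism $X\to\Sym^2_X$, $a\mapsto\{a,Q\}$ for any fixed $Q\in X$, yields a nonconstant entire curve in $\Sym^2_X$. In case (ii), with the notation of the proof of Theorem~\ref{symhyp}, the boundary divisors $D_1,D_2,D_3$ of $\Sym^2_X$ inside $\Sym^2_{\PP^1}\cong\PP^2$ are three lines with no common point (an effective divisor of degree $2$ on $\PP^1$ has at most two points in its support), hence in general position, so $\Sym^2_X\cong\PP^2\setminus(\text{three general lines})\cong\Gm^2$, which is manifestly not Brody hyperbolic. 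Case (iii) is the crux: here $\Sym^2_E$ is a $\PP^1$-bundle over $E$ via the addition map $\{a,b\}\mapsto a\oplus b$ (a classical fact: $\Sym^2_E$ is a geometrically ruled surface over $E$), and the boundary divisor $D=\{\{P,b\}:b\in E\}$ maps isomorphically onto $E$ under this projection, hence is a section; therefore $\Sym^2_X=\Sym^2_E\setminus D$ is an affine-line bundle over $E$. Pulling it back along the universal covering $\CC\to E$ gives an affine-line bundle over $\CC$, which is holomorphically trivial (for instance because $\CC$ is Stein, so $\HH^1(\CC,\OO)=0$ and line bundles on $\CC$ are trivial); composing any holomorphic section of this trivial bundle with the projection to $\Sym^2_X$ produces a nonconstant holomorphic map $\CC\to\Sym^2_X$. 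The contradiction in each of the three cases completes the proof.

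The step I expect to be the main obstacle is case (iii): it requires the (standard but not entirely trivial) description of $\Sym^2_E$ as a ruled surface over $E$, the identification of the boundary divisor as a section of it, and the mild function-theoretic input that an affine-line bundle over $\CC$ is trivial. Cases (i) and (ii) use only elementary facts about $\mathbb{A}^1$, $\Gm$, and $\Sym^2_{\PP^1}\cong\PP^2$.
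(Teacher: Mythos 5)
Your proof is correct and follows essentially the same route as the paper: both reduce via Theorem~\ref{symhyp} with $d=2$ to showing that Brody hyperbolicity of $\Sym^2_X$ forces Brody hyperbolicity of each $X\cup\{P\}$, and then run a case analysis on the genus of $\Xbar$; the paper cites known criteria (e.g.\ that for $X$ rational $\Sym^2_X$ is Brody hyperbolic iff $\#\Xbar\setminus X\geq 5$, and that $\Sym^2_{E\setminus\{P\}}$ is not Brody hyperbolic), whereas you construct the entire curves explicitly. As a minor simplification in your case (iii), once you know the boundary divisor is a section of the $\PP^1$-bundle $\Sym^2_E\to E$, any single fiber of $\Sym^2_X\to E$ is already a copy of $\mathbb{A}^1\cong\CC$, so the universal-cover and Stein-triviality argument is not needed.
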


\begin{proof}
By Theorem \ref{symhyp} with $d=2$, the corollary is equivalent to showing that if $\Sym^2_X$ is Brody hyperbolic then $X\cup \{P\}$ is Brody hyperbolic for any point $P\in \Xbar\setminus X$. To do so, let $g(\Xbar)$ denote the genus of $\Xbar$.  If $g(\Xbar)\geq 2$ then this is vacuous as $\Xbar$ is Brody hyperbolic.  If $g(\Xbar)=1$ then $X\cup \{P\}$ is Brody hyperbolic unless $X\cup \{P\}=\Xbar$ and $\Xbar\setminus X$ consists of a single point.  But in this case $\Sym^2_X$ is not Brody hyperbolic.  If $X$ is rational, then $\Sym^2_X$ is Brody hyperbolic if and only if $\#\Xbar\setminus X\geq 5$, which implies that $X\cup \{P\}$ is Brody hyperbolic for any point $P\in \Xbar\setminus X$.
\end{proof}

\begin{corollary}
\label{corsym}
Let $D=\Xbar\setminus X$ and let $d$ be a positive integer.  If $\deg D\geq d$ then $\Sym^d_X$ is hyperbolically embedded in $\Sym^d_{\Xbar}$ if and only if for every subset $T\subset D$, $t=|T|$, with $0\leq t<d$, there is no finite morphism $f:\overline{X}\to \mathbb{P}^1_{\CC}$ of degree at most $d-t$ such that $f(D\setminus T)\subset \{0,\infty\}$.
%\begin{align*}
%\Hom^{\leq d-t}((\Xbar, D\setminus T), (\mathbb{P}^1,\{0,\infty\}))
%\end{align*}
%is empty.  
\end{corollary}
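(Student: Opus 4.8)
The plan is to reduce to Theorem~\ref{symhyp} and then recast each of the Brody hyperbolicity conditions occurring there in terms of finite morphisms to $\mathbb{P}^1$, using the complex-analytic part (the equivalence $(2)\Leftrightarrow(3)$) of Theorem~\ref{thm:LevinCompositio}. By Theorem~\ref{symhyp}, $\Sym^d_X$ is hyperbolically embedded in $\Sym^d_{\Xbar}$ if and only if for every subset $T\subseteq D=\Xbar\setminus X$ with $0\leq|T|<d$ the variety $\Sym^{d-|T|}_{X\cup T}$ is Brody hyperbolic. Fix such a $T$ and set $t=|T|$, $e=d-t\geq 1$, $Z=X\cup T$ and $D'=\Xbar\setminus Z=D\setminus T$. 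Since $\deg D\geq d$ we have $|D'|=\deg D-t\geq e\geq 1$, so $D'\neq\emptyset$ and $Z$ is again a smooth affine connected curve with smooth projective model $\Xbar$; this is precisely where the hypothesis $\deg D\geq d$ is used, as it prevents any $Z=X\cup T$ from degenerating to the projective curve $\Xbar$ (for a projective curve the criterion below fails, e.g.\ when $\Xbar$ is elliptic).

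It therefore suffices to prove: for a smooth affine connected curve $Z$ over $\CC$ with smooth projective model $\Xbar$, non-empty boundary $D'=\Xbar\setminus Z$ and an integer $e\geq 1$, the variety $\Sym^e_Z$ is Brody hyperbolic if and only if there is no finite morphism $f\colon\Xbar\to\mathbb{P}^1_{\CC}$ of degree at most $e$ with $f(D')\subseteq\{0,\infty\}$. One implication is elementary: if such an $f$ exists, with $\deg f=m\leq e$, then $f^{-1}(\mathbb{G}_m)=\Xbar\setminus f^{-1}(\{0,\infty\})\subseteq\Xbar\setminus D'=Z$, and since $f$ is finite flat the assignment $z\mapsto f^{-1}(z)$ is an injective (in particular non-constant) morphism $\mathbb{G}_m\to\Sym^m_Z$; composing with the injective morphism $\Sym^m_Z\to\Sym^e_Z$, $\Delta\mapsto\Delta+(e-m)[Q]$ for a fixed $Q\in Z(\CC)$, and precomposing with $\exp\colon\CC\to\mathbb{G}_m(\CC)$, produces a non-constant holomorphic map $\CC\to\Sym^e_Z$, so $\Sym^e_Z$ is not Brody hyperbolic. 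For the reverse implication one uses the routine dictionary: giving a finite morphism $h\colon U\to\mathbb{G}_m$ from a dense open $U\subseteq Z$ amounts to giving a finite morphism $f\colon\Xbar\to\mathbb{P}^1_{\CC}$ with $U=f^{-1}(\mathbb{G}_m)$ (namely the unique extension of $h$; that $U=f^{-1}(\mathbb{G}_m)$ holds because $U\hookrightarrow f^{-1}(\mathbb{G}_m)$ is an open immersion which, by the cancellation property of finite morphisms, is also finite, hence an isomorphism onto the connected scheme $f^{-1}(\mathbb{G}_m)$), and for such $f$ one has $f^{-1}(\{0,\infty\})=\Xbar\setminus U\supseteq\Xbar\setminus Z=D'$, i.e.\ $f(D')\subseteq\{0,\infty\}$; the degrees match throughout. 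Thus condition~(3) of Theorem~\ref{thm:LevinCompositio}, applied with $(Z,e)$ in place of $(X,d)$, fails exactly when a finite morphism $f$ as above exists, so the outstanding implication ``no such $f$ exists $\Rightarrow\ \Sym^e_Z$ is Brody hyperbolic'' is precisely the implication $(3)\Rightarrow(2)$ of Theorem~\ref{thm:LevinCompositio}.

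The main obstacle is that Theorem~\ref{thm:LevinCompositio} is stated for curves over $\Qbar$, whereas $Z$ is an arbitrary curve over $\CC$. The implication built by hand and the dictionary above are purely formal and work over $\CC$; the genuine input required is the complex-analytic implication $(3)\Rightarrow(2)$ for curves over $\CC$, i.e.: from a non-constant entire curve in $\Sym^e_Z$ one must produce a finite morphism $\Xbar\to\mathbb{P}^1$ of degree $\leq e$ carrying $D'$ into $\{0,\infty\}$. This is exactly the implication whose proof is given in \cite{Levin}; that argument proceeds through the Abel--Jacobi map $\Sym^e_{\Xbar}\to\Jac(\Xbar)$ and the structure theory of entire curves in abelian varieties and their subvarieties, and, being complex-analytic in nature, applies equally to curves over $\CC$. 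Granting this, the equivalence above holds for every admissible $T$, and combining it with Theorem~\ref{symhyp} yields Corollary~\ref{corsym}, with $t=|T|$.
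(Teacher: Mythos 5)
Your proposal is correct and follows essentially the same route as the paper: reduce to Theorem~\ref{symhyp} and then convert, for each admissible $T$, the Brody hyperbolicity of $\Sym^{d-t}_{X\cup T}$ into the stated condition on finite morphisms $\overline{X}\to\mathbb{P}^1_{\CC}$ via the main analytic result of \cite{Levin} (the paper cites that result directly, over $\CC$, rather than routing through Theorem~\ref{thm:LevinCompositio}). The extra details you supply --- the role of $\deg D\geq d$ in keeping $X\cup T$ affine, the dictionary between morphisms $U\to\mathbb{G}_m$ and extensions $\overline{X}\to\mathbb{P}^1$, and the remark that the analytic input works over $\CC$ --- are all accurate elaborations of what the paper leaves implicit.
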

\begin{proof} 
Suppose that $\deg D\geq d$ and let $T\subset D$, $t=|T|$, with $0\leq t<d$.   Then, by the main (analytic) result of \cite{Levin},  $\Sym^{d-t}_{X\cup T}$ is Brody hyperbolic if and only if  there is no finite morphism $f:\overline{X}\to \mathbb{P}^1_{\CC}$ of degree at most $d-t$ such that $f(D\setminus T)\subset \{0,\infty\}$.  Thus, the result follows from Theorem \ref{symhyp}.
\end{proof}

As a consequence, we obtain Corollary \ref{cor:symprods_are_hypemb} from the introduction.

\begin{proof}[Proof of Corollary \ref{cor:symprods_are_hypemb}]
Since $(2d+1)-t> 2(d-t)$ for $t\geq 0$, this follows immediately from Corollary \ref{corsym} as for any morphism $\phi:\Xbar\to\mathbb{P}^1$, at most $2\deg \phi$ points of $\Xbar$ can map to $\{0,\infty\}$.
\end{proof}

Essentially the same proof gives:

\begin{corollary}\label{cor:hypemb}  
Let $X$ be a smooth affine connected curve over $\mathbb{C}$ and let $\Xbar$ be its smooth projective compactification.  If $\#\Xbar\setminus X\geq 2d$ and $\Sym^d_X$ is Brody hyperbolic, then $\Sym^d_X$ is hyperbolically embedded in $\Sym^d_{\Xbar}$.
\end{corollary}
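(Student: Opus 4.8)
The plan is to mimic the proof of Corollary \ref{cor:symprods_are_hypemb} almost verbatim, reducing everything to the characterization in Theorem \ref{symhyp} and a counting argument on the degree of a map to $\mathbb{P}^1$. First I would invoke Theorem \ref{symhyp}: $\Sym^d_X$ is hyperbolically embedded in $\Sym^d_{\Xbar}$ if and only if for every subset $T\subset \Xbar\setminus X$ with $0\leq |T|<d$, the variety $\Sym^{d-|T|}_{X\cup T}$ is Brody hyperbolic. So it suffices to verify the Brody hyperbolicity of each $\Sym^{d-|T|}_{X\cup T}$ under the hypotheses $\#(\Xbar\setminus X)\geq 2d$ and $\Sym^d_X$ Brody hyperbolic.

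The case $|T|=0$ is the hypothesis itself, so assume $1\leq t:=|T|<d$. By the main analytic result of \cite{Levin} (as used in Corollary \ref{corsym}), $\Sym^{d-t}_{X\cup T}$ fails to be Brody hyperbolic precisely when there is a finite morphism $f:\overline{X}\to\mathbb{P}^1_{\CC}$ of degree at most $d-t$ with $f((\Xbar\setminus X)\setminus T)\subset\{0,\infty\}$. The set $(\Xbar\setminus X)\setminus T$ has cardinality at least $2d-t$, and since $f$ has degree $\leq d-t$, at most $2(d-t)$ points of $\Xbar$ can map into the two-point set $\{0,\infty\}$. Thus we would need $2d-t\leq 2(d-t)$, i.e. $t\leq 0$, contradicting $t\geq 1$. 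Hence no such $f$ exists and $\Sym^{d-t}_{X\cup T}$ is Brody hyperbolic for all such $T$.

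Applying Theorem \ref{symhyp} with all these Brody hyperbolicity statements verified then gives that $\Sym^d_X$ is hyperbolically embedded in $\Sym^d_{\Xbar}$, completing the proof. One subtlety worth double-checking: Corollary \ref{corsym} is stated under the hypothesis $\deg D\geq d$, which here reads $\#(\Xbar\setminus X)\geq d$ — this is implied by $\#(\Xbar\setminus X)\geq 2d$ (for $d\geq 1$), so the criterion in terms of maps to $\mathbb{P}^1$ is available. The main (and essentially only) point of care is simply getting the inequality $2d-t\leq 2(d-t)$ to go the right way, which it does strictly once $t\geq 1$; the case $t=0$ being handled separately by hypothesis is what prevents the argument from needing the extra point that $2d+1$ would supply. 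There is no serious obstacle here — the corollary is a direct and slightly weakened repackaging of Corollary \ref{corsym}, and "essentially the same proof" is an accurate description.
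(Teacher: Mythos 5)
Your proof is correct and follows exactly the route the paper intends: reduce to Theorem \ref{symhyp}, handle $T=\emptyset$ by the Brody hyperbolicity hypothesis, and for $t\geq 1$ rule out the degree-$(d-t)$ morphisms via the counting inequality $2d-t>2(d-t)$, which is precisely the "essentially the same proof" the paper refers to from Corollary \ref{cor:symprods_are_hypemb}. No gaps.
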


  \section{Integral points on symmetric products}
  
    We follow \cite{JBook, JKa, JV} and say that a variety $V$ over $k$ is \emph{groupless over $k$} if, for every finite type connected group scheme $G$ over $k$, every morphism $G\to V$ is constant. Note that Lang refers to such varieties as being ``algebraically-hyperbolic''; see \cite{Lang2}. If $V$ is affine, then  $V$ is groupless if and only if every morphism $\mathbb{G}_{m,k}\to V$ is constant  \cite[Lemma~2.5]{JKa}.  Moreover, by \cite[Lemma~2.3]{JKa}, if $L/k$ is an extension of algebraically closed fields, then $V$ is groupless over $k$ if and only if $V_L$ is groupless over $L$. 
    
    We are concerned with symmetric products of smooth affine connected curves.  
Let $X$ be a smooth affine connected curve over $k$,  and let $d\geq 1$. An obvious obstruction to the arithmetic hyperbolicity of $\Sym^d_X$ is the existence of a non-constant morphism $\mathbb{G}_{m,k}\to \Sym^d_X$, where $\mathbb{G}_m $ denotes the multiplicative group scheme over $\mathbb{Z}$. That is, if $\Sym^d_X$ is arithmetically hyperbolic over $k$, then $\Sym^d_X$ is groupless over $k$ (see also \cite[Proposition~3.9]{JAut}).   Lang-Vojta's conjectures predict   that  the affine variety $\Sym^d_X$ is arithmetically hyperbolic over $k$ if (and only if) it is groupless over $k$. To make this more concrete,  we now give an explicit criterion for the grouplessness of $\Sym^d_X$. 

\begin{proposition}\label{prop:grouplessness}
There is a non-constant morphism $\mathbb{G}_{m,k}\to \Sym^d_X$  if and only if there is a dense open $U\subset X$ and a finite morphism $U\to \mathbb{G}_{m,k}$ of degree at most $d$. 
\end{proposition}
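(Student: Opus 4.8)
The plan is to understand morphisms $\Gm \to \Sym^d_X$ concretely. Since $\Sym^d_X$ is an affine variety, a morphism $\phi:\Gm \to \Sym^d_X$ is the same as a $\QQ[t,t^{-1}]$-point, i.e. an effective relative divisor of degree $d$ on $X$ over $\Gm$. Pulling back along the quotient map $X^d \to \Sym^d_X$ over the finite étale cover of $\Gm$ that splits the divisor (or, more directly, working with the associated family of degree-$d$ subschemes of $X$), we obtain a closed subscheme $Z\subset X\times \Gm$ that is finite flat of degree $d$ over $\Gm$. First I would form the first projection $Z\to X$; since $X$ is a curve and $Z$ is a curve (being finite over $\Gm$), the image is either a point or a dense open $U\subset X$. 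If it is a point, then $\phi$ factors through $\Sym^d_{\{pt\}}=\{pt\}$ and is constant, contradiction; so $Z\to X$ dominates a dense open $U\subset X$, and after shrinking $U$ we may assume $Z\to U$ is finite. Composing $Z\to \Gm$ with $Z\to U$ gives, generically on $U$, a correspondence; the key point is that $Z\to \Gm$ has degree $d$, so $Z\to U$ composed the other way produces the desired finite morphism. More precisely, I would argue that the second projection $Z\to \Gm$ is finite of degree $d$, and that $Z\to U$ is finite of some degree $e\ge 1$; then $U\to \Gm$ (the composite, well-defined after shrinking so that $Z\to U$ is finite and we can take a section or use the norm) will have degree $d/e \le d$. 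Conversely, given a dense open $U\subset X$ and a finite morphism $f:U\to \Gm$ of degree $\le d$, I would produce a non-constant morphism $\Gm\to \Sym^d_X$ by sending $t$ to the fiber $f^{-1}(t)$, regarded as an effective divisor of degree $\deg f$ on $U\subset X$, and then adding a fixed base point of $X$ with multiplicity $d-\deg f$ to make it degree exactly $d$; this defines a morphism $\Gm\to \Sym^d_X$ (using that $f$ is finite flat over a dense open of $\Gm$, hence over all of $\Gm$ after shrinking $U$, or by spreading out), and it is non-constant because $f$ is.

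For the easy (converse) direction, I would be careful about whether $f:U\to\Gm$ is flat: a finite morphism from a smooth curve to a smooth curve is automatically flat, so $f$ is finite flat of degree $\deg f$, and the assignment $t\mapsto f^{-1}(t)$ (as a length-$\deg f$ subscheme) is a well-defined $\Gm$-point of $\Sym^{\deg f}_U$, hence of $\Sym^{\deg f}_X$; composing with the "add a fixed point with multiplicity $d-\deg f$" map $\Sym^{\deg f}_X \to \Sym^d_X$ gives the non-constant morphism $\Gm \to \Sym^d_X$. Non-constancy follows since $f$ is surjective, so the divisors $f^{-1}(t)$ vary.

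The harder direction is the forward one, where the main obstacle is extracting an honest finite morphism $U\to\Gm$ of degree $\le d$ from the family of degree-$d$ divisors. The subtlety is that the "universal" subscheme $Z\subset X\times\Gm$ need not be reduced or irreducible, and the two projections $Z\to X$ and $Z\to\Gm$ can have different degrees; I expect to handle this by passing to an irreducible component $Z_0$ of $Z_{\mathrm{red}}$ that dominates $\Gm$ (one exists since $Z\to\Gm$ is surjective), noting $Z_0\to\Gm$ is finite of some degree $e_0\le d$, and then observing that $Z_0\to X$ must also be dominant — otherwise $Z_0$ maps to a point of $X$ and then $Z_0\to\Gm$ cannot be finite since $Z_0$ would be contained in a single fiber $\{pt\}\times\Gm\cong\Gm$, which is fine, but then this component contributes a constant divisor, and since $\phi$ is non-constant \emph{some} component $Z_0$ must dominate both factors. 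For that component, $Z_0\to X$ is dominant hence finite onto a dense open $U\subset X$ (after shrinking), and the induced map $U\to\Gm$ — obtained by viewing $Z_0$ as the graph-like correspondence and taking, say, a norm or using that $Z_0\to U$ is finite to descend $Z_0\to\Gm$ — is finite of degree dividing $e_0\le d$. The bookkeeping of degrees and the reduction to a single well-chosen component is where the care is needed; everything else is formal manipulation of symmetric products and finite morphisms of curves.
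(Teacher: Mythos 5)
Your converse direction is fine and is essentially what the paper does (pull back fibers of $f$ and pad with a fixed base point). The forward direction, however, has a genuine gap at exactly the step you flag as delicate: extracting a finite morphism $U\to\Gm$ from the component $Z_0\subset U\times\Gm$. If $a:=\deg(Z_0\to U)>1$, the map $Z_0\to\Gm$ does \emph{not} descend to $U$, and the norm $N_{Z_0/U}(t)$ can be constant. Concretely, take $f\in\mathcal{O}(X)$ non-constant of degree $d$ whose polar locus is all of $\overline{X}\setminus X$, fix $c\in k^*$, and set $\varphi(s)=f^*(s+c/s)\in\Sym^d_X$; the associated $Z_0$ is cut out by $s^2-f(x)s+c=0$, is irreducible for generic $f$, has $\deg(Z_0\to X)=2$, and the norm of $t=s$ is the constant $c$. (The trace happens to recover $f$ here, but in general no single elementary symmetric function of the $t$-values need work.) Your degree count ``$d/e$'' for the two projections of a correspondence is also unfounded. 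Moreover, even if you produce a non-constant rational function $g$ on $\overline{X}$ of degree $\le d$, the statement requires $g$ to restrict to a \emph{finite} morphism from a dense open of $X$ onto $\Gm$, which forces $g(\overline{X}\setminus X)\subset\{0,\infty\}$; your argument never confronts this condition, and it is precisely the hard content of the proposition.

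The paper's proof supplies the missing idea as follows. Since abelian varieties contain no rational curves, the image of $\Gm\to\Sym^d_X$ lies in a fiber of the Abel--Jacobi map, i.e.\ in a projective space $\mathbb{P}^n$ (a complete linear system on $\overline{X}$) minus $r$ hyperplanes $H_1,\dots,H_r$, one for each boundary point of $X$. One then proves that a hyperplane complement in $\mathbb{P}^n$ admitting a non-constant map from $\Gm$ contains a \emph{line} meeting $\cup H_i$ in at most two points; this is a Borel-type lemma, proved via the function-field $S$-unit equation (Mason, Brownawell--Masser, Voloch) applied to a minimal linear dependence among the pullbacks of the defining linear forms. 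That line is a pencil giving $g:\overline{X}\to\mathbb{P}^1$ of degree $\le d$ with $g(\overline{X}\setminus X)\subset\{0,\infty\}$, whence $U=g^{-1}(\Gm)$ works. Some such unit-equation (or Borel/Green-type degeneracy) input appears unavoidable, and your correspondence bookkeeping does not substitute for it.
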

\begin{proof}
Let $\overline{X}$ be the smooth projective model for $X$ over $k$, and let $\{P_1,\ldots, P_r\} :=\overline{X}\setminus X$.  Since abelian varieties do not contain rational curves, if there is a non-constant morphism $\varphi:\mathbb{G}_{m,k}\to \Sym^d_X$, then the image of $\varphi$ must lie in a fiber of the Abel-Jacobi map on $\Sym^d_{\overline{X}}$.  This fiber is isomorphic to some projective space $\mathbb{P}^n_k$ and corresponds to some complete linear system on the curve $\overline{X}$ (interpreting points of $\Sym^d_{\overline{X}}$ as degree $d$ effective divisors).  The intersection of this fiber with $\Sym^d_X$ is a complement of hyperplanes $H_1,\ldots, H_r$ in $\mathbb{P}^n_k$ (one hyperplane $H_i$ for each point $P_i$).  If there exists a line $L$ intersecting $\cup H_i$ in two or fewer points, then the line $L$ corresponds to a linear system giving a finite morphism $f:\overline{X}\to \mathbb{P}^1_k$ of degree at most $d$ which maps $\{P_1,\ldots, P_r\}$ to a set of (at most) two points (which we may take to be $\{0,\infty\}$). It follows from \cite[Theorem~4.1]{Levin} that there is a dense open $U\subset X$ and a finite morphism $U\to \mathbb{G}_{m,k}$ of degree at most $d$.  Thus, it suffices to show that if there exists a non-constant morphism $\mathbb{G}_{m,k}\to \mathbb{P}^n_k\setminus \cup_{i=1}^rH_i$ (i.e., the complement of $H_1,\ldots, H_r$ is non-groupless), then there exists a linear such $\mathbb{G}_{m,k}$ (equivalently, a line in $\mathbb{P}^n_k$ intersecting $\cup H_i$ in two or fewer points).  

Suppose now that there exists a non-constant morphism $\varphi:\mathbb{G}_{m,k}\to \mathbb{P}^n_k\setminus \cup_{i=1}^rH_i$.  By replacing $\mathbb{P}^n_k$ by an appropriate linear space, we may assume that the image of $\mathbb{G}_{m,k}$ is not contained in any hyperplane.  Let $L_1,\ldots, L_r$ be linear forms over $k$ defining $H_1,\ldots, H_r$.  If $L_1,\ldots, L_r$ are linearly independent then $r\leq n+1$ and it's easy to see that the desired line exists (take any appropriate line containing a point in $\cap_{i=1}^{r-1}H_i$). Otherwise, let $L_{i_1},\ldots, L_{i_m}$ be linearly dependent over $k$ with $m$ minimal.  Let $\phi_j=\frac{L_{i_j}}{L_{i_m}}\circ \varphi$, $j=1,\ldots, m$.  Then $\phi_j$ and $1/\phi_j$ are regular functions on $\mathbb{G}_{m,k}$, and $\phi_j$ may be identified with $c_jt^{n_j}\in k(t)$ for some constant $c_j\in k^*$ and $n_j\in\mathbb{Z}$.  Since the image of $\varphi$ is not contained in any hyperplane, from the minimality of $m$, all of the powers $n_j$ are distinct. However, this clearly contradicts that $\phi_1,\ldots, \phi_m$ are linearly dependent over $k$ (this last argument is an elementary case ($g=0$, $|S|=2$) of the function field $S$-unit equation height inequality of Mason \cite{Mason}, Brownawell-Masser \cite{BM}, and Voloch \cite{Voloch}).

Conversely, the existence of an open $U\subset X$ and finite morphism $U\to \mathbb{G}_{m,k}$ of degree at most $d$ obviously imply the existence of  a non-constant morphism $\mathbb{G}_{m,k}\to \Sym^d_X$. 
\end{proof}

Proposition \ref{prop:grouplessness} says that $\Sym^d_X$ is not groupless over $k$ if and only if there exists a dense open $U\subset X$ and a finite morphism $U\to \mathbb{G}_{m,k}$ of degree at most $d$. 
In \cite{Levin} the second-named author showed that, if $k=\Qbar$, then the obvious obstruction to the arithmetic hyperbolicity of $\Sym^d_X$ (i.e., its non-grouplessness) is in fact the only one. That is, the variety $\Sym^d_X$ is arithmetically hyperbolic over $\Qbar$ if and only if $\Sym^d_X$ is groupless over $\Qbar$. We  now prove the following extension of Theorem \ref{thm:LevinCompositio}, as stated in the introduction.

  \begin{proof}[Proof of Theorem \ref{thm:lv}] Let $X$ be a smooth affine connected curve over $\Qbar$ with smooth projective model $\overline{X}$. Let $d\geq 1$ and assume that $2d \leq \#(\overline{X}\setminus X)$.  Note that $(3)$ holds if and only if  $\Sym^d_X$ is groupless by Proposition \ref{prop:grouplessness}. From this it is clear that $(1)\implies (3)$, as arithmetically hyperbolic varieties are groupless \cite[Proposition~3.9]{JAut}. Similarly, as Brody hyperbolic varieties are (obviously) groupless, we see that $(2)\implies (3)$.

Assume $(3)$ holds, i.e., the affine variety $\Sym^d_X$ is groupless over $\Qbar$. Then, by \cite{Levin}, $\Sym^d_X$ is arithmetically hyperbolic over $\Qbar$ and the variety $\Sym^d_{X_\CC}$ is Brody hyperbolic. Therefore,  by  Corollary \ref{cor:hypemb}, the variety $\Sym^d_{X_\CC}$ is hyperbolically embedded in $\Sym^d_{\overline{X}_\CC}$.  This shows that $(2)$ holds.  In particular, by our main result (Theorem \ref{thm:hyp_emb1}), the Persistence Conjecture holds for $\Sym^d_X$. Thus, as $\Sym^d_X$ is arithmetically hyperbolic over $\Qbar$, it follows that for every algebraically closed field $k$ of characteristic zero, the variety $\Sym^d_{X_k}$ is arithmetically hyperbolic over $k$. This  shows that $(1)$ also holds, and concludes the proof.
  \end{proof}
  
  \begin{proof}[Proof of Theorem \ref{thm4}] Let $X$ be a smooth affine connected curve over $\Qbar$. To prove the theorem, 
we argue as in the proof of Theorem \ref{thm:lv}. Indeed,   since $(1)\implies (3)$ and $(2)\implies (3)$, we may and do assume that $(3)$ holds. Then, by   \cite{Levin}, the variety $\Sym^2_{X_\CC}$ is Brody hyperbolic  and $\Sym^2_{X}$ is arithmetically hyperbolic over $\Qbar$. As $\Sym^2_{X_\CC}$ is Brody hyperbolic, it follows from  Corollary  \ref{sym2} that  the surface $\Sym^2_{X_\CC}$ is hyperbolically embedded in $\Sym^2_{\overline{X}_\CC}$. This shows that $(2)$ holds.  Next, by Theorem \ref{thm:hyp_emb1}, the Persistence Conjecture holds for  the surface $\Sym^2_{X}$, so that $\Sym^2_X $ is absolutely arithmetically hyperbolic. This shows that $(1)$ holds, as required.
  \end{proof}

  \section{Non-hyperbolic embeddings of Brody hyperbolic symmetric products} \label{section:final}

In this section we prove Theorems \ref{thm:finale} and \ref{thm:finale2}. First, 
we show that for $d\geq 3$ the quantity $2d$ in   Corollary \ref{cor:hypemb} is sharp (see Corollary~\ref{sym2} for $d=2$).

\begin{proof}[Proof of Theorem \ref{thm:finale}]
Let $d\geq 3$.  Let $\Xbar$ be a smooth projective curve over $\Qbar$ of genus $g(\Xbar)>(d-2)(d-1)$ and gonality $d-1$ (such curves are easily constructed).  Let $\phi:\Xbar\to\mathbb{P}^1$ be a morphism of degree $d-1$, which after an automorphism of $\mathbb{P}^1$ we can assume is unramified above $0$ and $\infty$.  Let $P\in \Xbar\setminus \phi^{-1}(\{0,\infty\})$ and let $D=\phi^{-1}(\{0,\infty\})\cup \{P\}$. Let $X=\Xbar\setminus D$ and note that $\#\Xbar\setminus X=2d-1$.  By Corollary \ref{corsym} with $T=\{P\}$, for every embedding $\Qbar \to \CC$, we see that $\Sym^d_{X_\CC}$ is not hyperbolically embedded in $\Sym^d_{\Xbar_\CC}$.  On the other hand, it follows from Castelnuovo's inequality \cite[p.~366]{ACGH} and our assumptions on the genus and gonality of $\Xbar$ that there does not exist a morphism $\Xbar\to\mathbb{P}^1$ of degree $d$.  Since $\#\Xbar\setminus X=2d-1>2(d-1)$, this immediately implies that $(3)$ holds. Therefore, by Theorem \ref{thm:LevinCompositio}, $\Sym^d_X$ is arithmetically hyperbolic over $\Qbar$ (so that $(1)$ holds) and, for every embedding $\Qbar \to \CC$, the variety $\Sym^d_{X_\CC}$ is Brody hyperbolic (so that $(2)$ holds). This concludes the proof.
\end{proof}

\begin{theorem}\label{thm:final_hypemb}
Let $\overline{X}$ be a smooth projective connected hyperelliptic curve of genus $g\geq 3$ over $\CC$, and let $\iota:\overline{X}\to \overline{X}$ be the hyperelliptic involution. Let $P_1, P_3, P_5$ be pairwise distinct non-Weierstrass points, let $P_2 :=\iota(P_1)$ and $P_4 := \iota(P_3)$, and write $X:= \overline{X}\setminus \{P_1,P_2, P_3,P_4,P_5\}$.   Then $\Sym^3_X$ is not hyperbolically embeddable.
\end{theorem}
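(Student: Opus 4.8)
The plan is to argue by contradiction. Suppose $\Sym^3_X$ is hyperbolically embeddable; fix a smooth projective variety $Y$ together with an open immersion $\Sym^3_X\hookrightarrow Y$ realizing a hyperbolic embedding. Normalize the hyperelliptic double cover $\phi\colon\overline X\to\mathbb{P}^1$ so that $\phi(P_1)=\phi(P_2)=0$ and $\phi(P_3)=\phi(P_4)=\infty$ (legitimate, since $\{P_1,P_2\}=\phi^{-1}(0)$ and $\{P_3,P_4\}=\phi^{-1}(\infty)$ are fibres of $\phi$), and set $\lambda_5:=\phi(P_5)$; because $P_5$ is a non-Weierstrass point we have $\phi^{-1}(\lambda_5)=\{P_5,\iota(P_5)\}$ with $P_5\neq\iota(P_5)$, and $\lambda_5\notin\{0,\infty\}$. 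Write $D_i=\{D\in\Sym^3_{\overline X}:P_i\in\operatorname{supp}D\}$, so that $\Sym^3_X=\Sym^3_{\overline X}\setminus(D_1\cup\cdots\cup D_5)$. As in the proof of Theorem~\ref{thm:finale} — which uses Theorem~\ref{symhyp} and Green's converse (Theorem~\ref{thm:greenconverse}) — the source of the failure of a hyperbolic embedding into $\Sym^3_{\overline X}$ is the boundary stratum $D_5\setminus(D_1\cup\cdots\cup D_4)\cong\Sym^2_{\overline X\setminus\{P_1,P_2,P_3,P_4\}}$: it fails to be Brody hyperbolic because the $g^1_2$ of $\overline X$ gives the entire curve $\mathbb{G}_m\to\Sym^3_{\overline X}$, $\lambda\mapsto\phi^{-1}(\lambda)+P_5$, landing in it. What has to be shown here is the stronger statement that \emph{no} projective compactification $Y$ of $\Sym^3_X$ can be hyperbolic.

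The geometric input is the ruled surface swept out by the pencil. Since $g\ge 2$, $\phi$ is the unique pencil of degree $2$ on $\overline X$, and the morphism $\mathbb{P}^1\times\overline X\to\Sym^3_{\overline X}$, $(\lambda,Q)\mapsto\phi^{-1}(\lambda)+Q$, is a closed immersion onto a smooth surface $W$ under which $D_i\cap W$ corresponds to $(\{\phi(P_i)\}\times\overline X)\cup(\mathbb{P}^1\times\{P_i\})$. Hence $W\cap\Sym^3_X=(\mathbb{P}^1\setminus\{0,\infty,\lambda_5\})\times X=:V$ is a \emph{closed} subvariety of $\Sym^3_X$, and the above entire curve is (the $\mathbb{G}_m$ inside) the rational curve $E:=\mathbb{P}^1\times\{P_5\}\subset D_5$. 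The crucial point — and the place where the non-Weierstrass hypothesis on $P_5$ is used — is that
\[
N_{D_5/\Sym^3_{\overline X}}\big|_E\;\cong\;\mathcal{O}_{\mathbb{P}^1}(1),
\]
since $N_{D_5/\Sym^3_{\overline X}}$ is the restriction to $D_5\cong\Sym^2_{\overline X}$ of the divisor $\{F\in\Sym^2_{\overline X}:P_5\in\operatorname{supp}F\}$, which meets the ($\phi$-pencil corresponding to) $E$ transversally in the single point $\phi^{-1}(\lambda_5)$ — transversally precisely because $P_5$ is not a Weierstrass point (if it were, the degree would be $2$). Consequently every deformation $E_Q:=\mathbb{P}^1\times\{Q\}\subset W$ of $E$ with $Q\to P_5$ in $X$ maps into $\Sym^3_X$ away from $\lambda\in\{0,\infty,\lambda_5\}$ but remains pinned to $D_5$ over the single point $\lambda=\lambda_5$, at the divisor $\phi^{-1}(\lambda_5)+Q=P_5+\iota(P_5)+Q$.

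To conclude, recall that since $\Sym^3_X$ is hyperbolically embedded in $Y$, Kwack's theorem (\cite{KobayashiBook,Kwack}) implies that every holomorphic map $\Delta^*\to\Sym^3_X$ extends to a holomorphic map $\Delta\to Y$; equivalently, no holomorphic map $\Delta^*\to\Sym^3_X$ can have, as $z\to 0$, two distinct limit points lying in $\Sym^3_X$, nor a convergent subsequence inside $\Sym^3_X$ together with a compactly divergent one. The plan is to violate this by constructing a single holomorphic map $f\colon\Delta^*\to\Sym^3_X$ that is \emph{not} compactly divergent at $0$ but does \emph{not} extend holomorphically over $0$ into $\Sym^3_X$ — for then, as in the preceding sentence, $f$ cannot extend over $0$ into any compactification $Y$. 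Such an $f$ is to be produced from the family $\{E_Q\}$: using the $\mathcal{O}_{\mathbb{P}^1}(1)$ normal-bundle computation to ``open the node'' of $E$ at $\lambda=\lambda_5$ inside the $3$-dimensional $\Sym^3_{\overline X}$, one gets a holomorphic two-parameter family $\Psi\colon\Delta_s\times\Delta_\lambda\to\Sym^3_{\overline X}$ with $\Psi(0,\cdot)$ parametrizing $E$ near $\lambda_5$ and $\Psi^{-1}(\Sym^3_X)=\Delta_s^*\times\Delta_\lambda^*$ locally; gluing this local picture to the global family $E_Q$ (with $Q$ spiralling into $P_5$) and pulling back along a suitable holomorphic arc $\Delta^*\to$ (the resulting parameter space) produces an $f\colon\Delta^*\to\Sym^3_X$ whose image accumulates both on an interior point and on the boundary rational curve $E$, as required.

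The step I expect to be the real obstacle is this last construction: turning the normal-bundle/deformation picture of the boundary rational curve $E$ into an honest oscillating holomorphic disc in $\Sym^3_X$ that does not extend over $0$, and, more importantly, verifying that the non-hyperbolicity of $E\cong\mathbb{P}^1$ (equivalently of the stratum $\Sym^2_{\overline X\setminus\{P_1,\dots,P_4\}}$) genuinely survives into every compactification — in particular that it interacts with the divisors $D_1,\dots,D_4$ over $\lambda=0,\infty$ in a way that cannot be undone by blowing up or modifying the boundary. It is here that the precise configuration — the hyperelliptic involution $\iota$ with $P_2=\iota(P_1)$, $P_4=\iota(P_3)$, the non-Weierstrass condition on $P_5$, and the numerology $\#(\overline X\setminus X)=5=2\cdot 3-1$ — has to be used in an essential way, in contrast to the weaker statement of Theorem~\ref{thm:finale}.
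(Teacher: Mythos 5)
There is a genuine gap. You correctly identify the geometric culprit (the $g^1_2$-pencil $\lambda\mapsto \phi^{-1}(\lambda)+P_5$ sitting in the boundary stratum $D_5\setminus\cup_{j\le 4}D_j$, and the ruled surface $W$ swept out by the curves $E_Q$), but your proposed route to a contradiction --- manufacturing a single holomorphic map $f:\Delta^*\to\Sym^3_X$ that accumulates both at an interior point and on the boundary curve $E$, contradicting the extension theorem for hyperbolic embeddings --- is never carried out, and you yourself flag it as ``the real obstacle.'' That is not a deferrable detail: it is the entire content of the theorem. Moreover, the sketch is doubtful as stated. The family $\{E_Q\}$ naturally produces \emph{pairs} of points of $\Sym^3_X$ at small Kobayashi distance whose limits are distinct points of $E$ (this is exactly the Noguchi--Winkelmann degeneration the paper invokes); it does not obviously produce an oscillating punctured disc. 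Indeed, any disc of the form $z\mapsto \phi^{-1}(g(z))+h(z)$ living on $W$ has $h:\Delta^*\to X$ mapping into a hyperbolic curve, so $h$ extends over the puncture and cannot ``spiral'' between $P_5$ and an interior point; to escape this you must leave $W$ via the normal-bundle deformation, and you give no argument that the resulting arc stays in $\Sym^3_X$ while still failing to converge in $Y$. Finally, your last paragraph concedes that one must check the failure ``cannot be undone by blowing up or modifying the boundary'' --- which is precisely the statement to be proved, not a verification to be appended.

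For comparison, the paper closes exactly this gap with two soft tools rather than an explicit disc. First, since $\Sym^3_{\Xbar}\setminus\Sym^3_X$ is a normal crossings divisor, the Kiernan--Kobayashi--Kwack extension theorem extends the identity to a morphism $\Sym^3_{\Xbar}\to Y$ for \emph{any} projective $Y$ in which $\Sym^3_X$ is hyperbolically embedded; the Noguchi--Winkelmann distance-degeneration along $C\cong\mathbb{G}_m$ then forces this morphism to contract $C$. Second --- and this is the step your sketch has no analogue of --- the contraction is propagated globally: $C$ is (up to closure) one fiber of the ruled surface $S\to\Xbar$ (your $W$, realized as $f^{-1}(\Xbar)$ under the Abel--Jacobi map), so the Rigidity Lemma forces $S\to Y$ to contract \emph{every} fiber, contradicting that $S\to Y$ is birational onto its image. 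If you want to salvage your approach, you would need either to actually construct the non-extending disc (hard, and possibly not doable directly), or to replace that step with an argument like the paper's extension-plus-rigidity mechanism.
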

\begin{proof}
Let $Y$  be a projective variety over $\mathbb{C}$ and let $\Sym^3_{X}\subset Y$ be a hyperbolic embedding (so that, in particular, the threefold $\Sym^3_{X}$ is Kobayashi hyperbolic). Consider the natural embedding $\Sym^3_X\subset \Sym^3_{\Xbar}$, and note that its complement is a normal crossings divisor. Therefore, as $\Sym^3_X$ is hyperbolically embedded in $Y$, the extension theorem of Kiernan-Kobayashi-Kwack \cite[Theorem 6.3.9]{KobayashiBook} implies that the identity map 
$\Sym^3_X\to \Sym^3_X$ extends to a morphism $\Sym^3_{\Xbar} \to Y$. 

As in the proof of Theorem \ref{symhyp}, if $D_1,\ldots, D_5$ are the effective divisors on $\Sym^3_{\Xbar}$ naturally corresponding to the points $P_1,\ldots, P_5$, then
\begin{align*}
D_5\setminus \cup_{j=1}^4D_j\cong \Sym^2_{\overline{X}\setminus \{P_1,P_2, P_3,P_4\}}.
\end{align*}
The unique $g^1_2$ on $\overline{X}$ corresponds to a $\mathbb{P}^1$ in $\Sym^2_{\overline{X}}$, and the intersection with $\Sym^2_{\overline{X}\setminus \{P_1,P_2, P_3,P_4\}}$ yields a curve isomorphic to $\mathbb{G}_m$ (corresponding to $g^1_2\setminus \{P_1+P_2,P_3+P_4\}$).  Under the isomorphism above, we let $C\cong \mathbb{G}_m\subset D_5\setminus \cup_{j=1}^4D_j$ be the corresponding curve in the boundary of $\Sym^3_X$ (in $\Sym^3_{\overline{X}}$).

  From the proof of Theorem \ref{thm:greenconverse} (see \cite[Th.~7.2.13]{NWBook}), for any two points $Q,R\in C$ there are sequences $\{Q_i\}$ and $\{R_i\}$ in $\Sym^3_X$ converging to $Q$ and $R$, respectively, such that $$d_{\Sym^3_X}(Q_i,R_i)\to 0.$$  It follows immediately that if $\Sym^3_X$ is hyperbolically embedded in $Y$, then the morphism $\Sym^3_{\Xbar} \to Y$ must contract $C$ to a point. (In particular, the presence of this $\mathbb{G}_m$ is enough to conclude that $\Sym^3_X\subset \Sym^3_{\Xbar}$ is not a hyperbolic embedding and, as we will show now, its presence is also enough   to conclude that $\Sym^3_X\subset Y$ is not a hyperbolic embedding.)

Fix a Weierstrass point $P_0$  in $\Xbar$ and 
consider  the morphism $f:\Sym^3_{\Xbar}\to \Jac(\Xbar)$ given by  $(P,Q,R)\mapsto [P+Q+R-3P_0]$.  Furthermore,  consider the embedding of $\Xbar$ in $\Jac(\Xbar)$ given by $P\mapsto [P-P_0]$ and identify $\Xbar$ with its image.  Let $S=f^{-1}(\Xbar)$, and note that $S$ is a  surface in $\Sym^3_{\Xbar}$.  Explicitly, the surface $S$ consists of points of the form $P+\iota(P)+Q$ in $\Sym^3_{\Xbar}$ (identifying points with degree $3$ effective divisors).  The closure of the curve $C$ is precisely the fiber $F$ of $S$ above $[P_5-P_0]$.  Then, by restriction, we obtain a morphism $S\to Y$ which contracts the fiber $F$ (of the morphism $S\to\overline{X}$). In a diagram:
\[
\xymatrix{ S \ar[rrrr]^{\textrm{contracts the fiber } F} \ar[d]  & & & & Y   \\ 
			\overline{X} & &  &		 &						 }
\]
 Since the morphism $S\to Y$ contracts the fiber $F$, it follows from the Rigidity Lemma  \cite[Lemma 1.6]{KM} that  it contracts every fiber.  However, since $S\to Y$ is birational onto its image, we obtain a contradiction, and conclude that $\Sym^3_X$ is not hyperbolically embeddable.
\end{proof}

\begin{proof}[Proof of Theorem \ref{thm:finale2}]
Let $\overline{X}$ be a smooth projective connected hyperelliptic curve of genus $g\geq 3$ over $\Qbar$, and let $\iota:\overline{X}\to \overline{X}$ be the hyperelliptic involution. Let $P_1, P_3, P_5$ be pairwise distinct non-Weierstrass points, and   let $P_2 :=\iota(P_1)$ and $P_4 := \iota(P_3)$. Define $X:= \overline{X}\setminus \{P_1,P_2, P_3,P_4,P_5\}$.  From the proof of Theorem \ref{thm:finale}, the variety $\Sym^3_X$ is arithmetically hyperbolic over $\Qbar$ and $\Sym^3_{X_\sigma}$ is Brody hyperbolic for every $\sigma:\Qbar\to \CC$.

Let $Y$ be a projective variety  over $\mathbb{C}$ and let $\Sym^3_{X_\sigma}\subset Y$ be an embedding.  By Theorem \ref{thm:final_hypemb},  this embedding $\Sym^3_{X_\sigma}\subset Y$ is not a hyperbolic embedding. This concludes the proof. 
\end{proof}

 \bibliography{refsci}{}
\bibliographystyle{alpha}

\end{document}